\documentclass[12pt,reqno]{amsart}
\usepackage{fullpage}
\usepackage{times}
\usepackage{amsmath,amssymb,amsthm,url}
\usepackage[utf8]{inputenc}
\usepackage[english]{babel}
\usepackage{comment}
\usepackage{bbm}
\usepackage{enumerate}
\usepackage{bm}
\usepackage{graphicx}
\usepackage{mathrsfs}
\usepackage{mathtools}
\newenvironment{poc}{\begin{proof}[Proof of claim]}{\end{proof}}

\usepackage[colorlinks=true, pdfstartview=FitH, linkcolor=blue, citecolor=blue, urlcolor=blue]{hyperref}
\usepackage[vlined,ruled]{algorithm2e}
\DeclarePairedDelimiter\abs{\lvert}{\rvert}

\newtheorem{thm}{Theorem}[section]
\newtheorem{lem}[thm]{Lemma}
\newtheorem{prop}[thm]{Proposition}
\newtheorem{cor}[thm]{Corollary}
\newtheorem{conj}[thm]{Conjecture}

\theoremstyle{definition}

\newtheorem{question}[thm]{Question}

\newtheorem{rem}[thm]{Remark}

\newtheorem{claim}[thm]{Claim}

\newcommand{\ep}{\varepsilon}

\newcommand{\N}{\mathbb{N}}

\newcommand{\F}{\mathbb{F}}
\newcommand{\rad}{\operatorname{rad}}

\title{Bipartite Diophantine tuples and their applications}
\author{Kin Ming Tsang}
\address{Department of Mathematics \\ University of British Columbia\\ Vancouver  V6T 1Z2 \\ Canada}
\email{kmtsang@math.ubc.ca}
\author{Chi Hoi Yip}
\address{School of Mathematics\\ Georgia Institute of Technology\\ Atlanta, GA 30332\\ United States}
\email{cyip30@gatech.edu}
\subjclass[2020]{11D45, 11D72, 11B30}
\keywords{Diophantine tuple, shifted power, Hilbert cube}
\begin{document}

\begin{abstract}
This paper investigates bipartite variants of generalized Diophantine tuples and their applications. We generalize a result of Bugeaud--Dujella on a special family of bipartite Diophantine tuples and affirmatively resolve a related question posed by the second author. Additionally, we establish new connections between bipartite Diophantine tuples and several known variants of Diophantine tuples, including those introduced by Banks--Luca--Szalay and Kihel--Kihel.
\end{abstract} 

\maketitle

\section{Introduction}
A set $\{a_{1}, a_{2},\ldots, a_{m}\}$ of distinct positive integers is a \textit{Diophantine $m$-tuple} if $a_ia_j+1$ is a perfect square for all $1\leq i<j\leq m$. Many generalizations and variants of Diophantine tuples have been studied extensively. We refer to the recent book of Dujella \cite{D24} for a comprehensive overview of the topic. In this paper, we prove several new results on bipartite variants of Diophantine tuples and discuss how bipartite Diophantine tuples are closely related to several variants of Diophantine tuples studied in the literature. In particular, we will discuss the connection between bipartite Diophantine tuples and variants of Diophantine tuples introduced by Banks--Luca--Szalay \cite{BLS09} and Kihel--Kihel \cite{KK01}.

Throughout the paper, let $k,n$ be integers with $k\geq 2$ and $n\neq 0$, and let $\N$ be the set of positive integers. A set $A$ of positive integers is a \textit{Diophantine tuple with property $D_{k}(n)$} if the product of $ab+n$ is a perfect $k$-th power for every $a,b\in A$ with $a\neq b$. Following the standard notations, we also write
\[M_{k}(n)=\sup \{|A| \colon A\subseteq{\mathbb{N}} \text{ satisfies the property }D_{k}(n)\}.\] 
These natural generalized notions of Diophantine tuples have been studied extensively; see, for example \cite{BDHL11, BD03, DKM22, D02, KYY, Y24+, Y24}. The best-known upper bound on $M_k(n)$ is of the form $M_k(n)\ll_k \log (|n|+1)$; see \cite{KYY, Y24+, Y24} for the best-known implied constant depending on $k$. Intuitively, a Diophantine tuple with property $D_{k}(n)$ is very restrictive, so there should be a uniform upper bound on $M_k(n)$. Under the Uniformity Conjecture \cite{CHM} (a consequence of the Bombieri–Lang conjecture), it is well-known that for each $k \geq 2$, there is a constant $C_k$ such that $M_k(n)\leq C_k$ holds for all nonzero integers $n$; see, for example, \cite{CY25, D02, KYY}. Assuming both the Uniformity Conjecture and the Lander--Parkin--Selfridge conjecture~\cite{LPS67} on sums of powers, recently Croot and the second author \cite{CY25} showed that there is an absolute constant $C$ such that $M_k(n)\leq C$ holds for all $k\geq 2$ and $n \neq 0$.

The second author recently introduced bipartite Diophantine tuples \cite{Y24}. However, the same objects have been studied, for example, by Bugeaud and Dujella \cite{BD03} and Bugeaud and Gyarmati \cite{BG04} two decades ago. More precisely, following \cite{Y24}, for each $k \ge 2$ and each nonzero integer $n$, we call a pair of sets $(A, B)$ a \textit{bipartite Diophantine tuple with property $BD_{k}(n)$} if $A, B$ are two subsets of $\N$ with size at least $2$, such that $ab+n$ is a $k$-th power for each $a \in A$ and $b \in B$. 

As observed in \cite{Y24}, Diophantine tuples are special bipartite Diophantine tuples, more precisely, if $A$ is a Diophantine tuple with property $D_{k}(n)$, then for any partition of $A$ into two subsets $A_1$ and $A_2$ each with size at least $2$, $(A_1, A_2)$ forms a bipartite Diophantine tuple with property $BD_{k}(n)$. Generally speaking, bipartite Diophantine tuples are much harder to study compared to Diophantine tuples, since being a Diophantine tuple imposes many more restrictions. For example, the quantities $M_2(1)$ and $M_2(-1)$ were studied extensively. Eventually, He, Togb\'e, and Ziegler \cite{HTZ19} proved that $M_2(1)=4$, and Bonciocat, Cipu, and Mignotte \cite{BCM22} proved that $M_2(-1)=3$. Nevertheless, it remains an open question to show that if $(A, B)$ is a bipartite Diophantine tuple with property $BD_{2}(1)$ (or $BD_{2}(-1)$, resp.), then $\min \{|A|,|B|\}$ is bounded by an absolute constant \cite{BHP25, CY25, Y26}; and the more general question for $BD_2(n)$ with $|n|\geq 2$ appears to be even harder. The second author \cite{Y24} studied the same question for $k\geq 3$ and proved the following theorem.

\begin{thm}[{\cite[Theorem 2.2]{Y24}}]\label{thm:logn}
If $k,n$ are integers with $k\geq 3$ and $n\neq 0$, then $$\min \{|A|,|B|\}\ll_{k} \log (|n|+1)$$ holds for all bipartite Diophantine tuples $(A, B)$ with property $BD_{k}(n)$.    
\end{thm}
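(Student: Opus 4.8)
The plan is to fix two elements of the larger set and run a gap principle on the smaller one. Assume without loss of generality that $|A|\le|B|$, put $m:=|A|$, and write $A=\{a_1<a_2<\cdots<a_m\}$. We may assume $m\ge 3$ (otherwise there is nothing to prove), so $|B|\ge 3$, and we fix the two smallest elements $b_1<b_2$ of $B$. Since $ab+n$ is a positive perfect $k$-th power for all $a\in A$, $b\in B$, write $ab_1+n=x_a^{k}$ and $ab_2+n=y_a^{k}$ with $x_a,y_a\in\N$.

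The algebraic engine is the identity
\[
(ab_1+n)(a'b_2+n)-(ab_2+n)(a'b_1+n)=n(a'-a)(b_2-b_1),
\]
valid for all $a<a'$ in $A$, whose left-hand side is a difference of two distinct perfect $k$-th powers, namely $(x_ay_{a'})^{k}$ and $(y_ax_{a'})^{k}$. Applying the elementary bound $|u^{k}-v^{k}|\ge k\min(u,v)^{k-1}$ for distinct positive integers $u,v$, and using that the smaller of the two products above is $\gg_k aa'b_1b_2$ as soon as $a\gg|n|$, one gets the gap inequality
\[
|n|\,(a'-a)(b_2-b_1)\ \gg_k\ (aa'b_1b_2)^{(k-1)/k}.
\]
Specializing to consecutive $a=a_i$, $a'=a_{i+1}$ yields a doubling phenomenon: there is a threshold $T$, polynomial in $|n|$, such that $a_i\ge T$ forces $a_{i+1}>2a_i$, since $a_{i+1}\le 2a_i$ would bound $a_i^{(k-2)/k}$ by a quantity $\ll_k|n|$. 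Dually, dividing $b_1y_a^{k}-b_2x_a^{k}=n(b_1-b_2)$ by $b_1x_a^{k}$ exhibits each $y_a/x_a$ as a very strong rational approximation to $(b_2/b_1)^{1/k}$ with error $\ll_k|n|/x_a^{k}$, and since two such approximations cannot be too close, this upgrades the growth of the large $a_i$ to a super-exponential one; combined with an upper bound on $a_m$ read off from the same identity, it follows that the number of $a_i$ exceeding $|n|^{O_k(1)}$ is $O_k(\log|n|)$.

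It therefore remains to bound the number $r$ of small elements $a_i\le|n|^{O_k(1)}$, and this is the real obstacle. The crude count ``the $x_{a_i}$ are distinct integers of size $|n|^{O_k(1)}$'' gives only $r\ll_k|n|^{O_k(1)}$, and bounding $r$ by the number of solutions $(x_{a_i},y_{a_i})$ of the Thue equation $b_2X^{k}-b_1Y^{k}=n(b_2-b_1)$ gives only $|n|^{o(1)}$; both are far from $\log|n|$. To do better one must use the multiplicative structure honestly. Since $ab+n$ is a perfect $k$-th power for every $a\in A,\ b\in B$, the product set $A\cdot B$ is contained in the set of shifted $k$-th powers $\{x^{k}-n:x\in\Z\}$; confronting this with the supposed abundance of small elements — through the Hilbert-cube circle of ideas, by which a set carrying enough multiplicative structure can contain only $O_k(\log N)$ shifted $k$-th powers up to height $N$ — is what finally forces $r\ll_k\log|n|$, whence $m=r+O_k(\log|n|)\ll_k\log(|n|+1)$. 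I expect this transfer, from ``few shifted $k$-th powers in multiplicatively structured sets'' to ``few small elements of $A$'', to be the principal difficulty; by contrast the gap-principle steps are routine in the theory of generalized Diophantine tuples.
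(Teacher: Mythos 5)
There is a genuine gap, and it sits exactly where you flag it yourself. Note first that this paper does not prove Theorem~\ref{thm:logn}; it is quoted from \cite{Y24}, so your argument must stand on its own. Its core difficulty is the count of the ``small'' elements, and your third paragraph replaces a proof by an appeal to a vague ``Hilbert-cube circle of ideas''. No such black box exists: with only the two fixed elements $b_1,b_2$ of $B$ in play, the known unconditional machinery (Gallagher's larger sieve combined with the character-sum bound of Lemma~\ref{lem:Weil}, as packaged in Proposition~\ref{prop:sieve2}) bounds the number of small elements of $A$ only by a power of $|n|$, not by $\log|n|$; indeed Theorem~\ref{thm1} of this paper is precisely the ``two fixed elements on one side'' situation and, even with an ordering hypothesis, only reaches $|n|^{\phi(k)/((k-3)k)+\ep}$, while Theorem~\ref{thm:ABC} needs the ABC conjecture to do better. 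The logarithmic bound of Theorem~\ref{thm:logn} is obtained in \cite{Y24} by exploiting that \emph{both} sets are large: once the side playing the role of $A$ has $\gg_k \log p$ distinct residues modulo the sieving primes $p$, Lemma~\ref{lem:Weil} forces the other side to have $O(m\sqrt{p})$ residues, and the larger sieve then yields a bound of size $\log(|n|+1)$, with the large elements disposed of by the gap principle and Evertse's count of solutions of Thue inequalities (cf.\ Proposition~\ref{prop:a1a2}). Your plan never uses $|B|\ge|A|$, or the size of either set, beyond deciding which side to bound, so it structurally cannot produce a logarithmic bound.

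The large-element step is also incorrect as written. Your gap inequality rearranges to $a_{i+1}\gg_k a_i^{k-1}b_1^{k-1}/(b_2|n|^k)$, so assuming $a_{i+1}\le 2a_i$ yields only $a_i^{k-2}\ll_k |n|^k\, b_2/b_1^{k-1}$: the threshold beyond which growth kicks in involves the uncontrolled quantity $b_2/b_1^{k-1}$ (the two smallest elements of $B$ need not be polynomially bounded in $|n|$, nor comparable to each other), not $|n|$ alone; the same factor $(b_2/b_1)^{1/k}$ enters the approximation error in your ``dual'' step, which therefore gives nothing new. Moreover, even granting super-exponential growth above some threshold, concluding that only $O_k(\log|n|)$ elements exceed $|n|^{O_k(1)}$ would require either an upper bound on $a_m$ in terms of $|n|$ (false in general) or an absolute bound on the number of large solutions of $b_2X^k-b_1Y^k=n(b_2-b_1)$; the latter is what Evertse's theorem \cite{E83} supplies in Proposition~\ref{prop:a1a2}, but there it is applied with the two fixed elements \emph{below} the elements being counted, a condition your choice of $b_1,b_2$ does not guarantee, and in any case nothing of the sort can be ``read off from the same identity''. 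So both halves of the proposal need repair, and the second (the small elements) needs an idea that is not present.
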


Bugeaud and Dujella \cite{BD03} proved bounds on special bipartite Diophantine tuples and used them to deduce bounds on Diophantine tuples. They proved that if $k\geq 4$, and $a_1<a_2<b_1<b_2<\cdots <b_m$ are positive integers such that $a_ib_j+1$ are $k$-th powers for $1\leq i \leq 2$ and $1\leq j \leq m$, then $m\leq C_1(k)$, where $C_1(4)=3$, $C_1(k)=2$ for $5\leq k \leq 176$, and $C_1(k)=1$ for $k\geq 177$. They also proved that if $a_1<a_2<a_3<b_1<b_2<\cdots <b_m$ are positive integers such that $a_ib_j+1$ are perfect cubes for $1\leq i \leq 3$ and $1\leq j \leq m$, then $m\leq 6$. Equivalently, they studied bipartite Diophantine tuples $(A, B)$ with property $BD_k(1)$ with the additional constraint that the maximum element in $A$ is less than the minimum element in $B$. Our first main result extends their results to bipartite Diophantine tuples with property $BD_k(n)$ with $n\neq 0$.

\begin{thm}\label{thm1}
Let $\ep>0$ and $k,n$ be integers with $n\neq 0$. If $k\geq 4$, and $a_1<a_2\leq b_1<b_2<\cdots <b_m$ are positive integers such that $a_ib_j+n$ are $k$-th powers for $1\leq i \leq 2$ and $1\leq j \leq m$, then 
$$m\ll_{k,\ep} |n|^{\frac{\phi(k)}{(k-3)k}+\ep},$$
where $\phi$ is Euler's totient function. If $a_1<a_2<a_3\leq b_1<b_2<\cdots <b_m$ are positive integers such that $a_ib_j+n$ are perfect cubes for $1\leq i \leq 3$ and $1\leq j \leq m$, then $$m\ll_{\ep} |n|^{\frac{34}{27}+\ep}.$$
\end{thm}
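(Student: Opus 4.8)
For each $j$ write $a_1b_j+n=u_j^k$ and $a_2b_j+n=v_j^k$ with $u_j,v_j\in\N$, and eliminate $b_j$ to obtain
\[
a_2u_j^k-a_1v_j^k=(a_2-a_1)n\qquad(1\le j\le m).
\]
Thus the $m$ pairs $(u_j,v_j)$ are solutions of a binomial Thue-type equation $F(X,Y)=(a_2-a_1)n$ with $F(X,Y)=a_2X^k-a_1Y^k$, and they are pairwise distinct as ratios since $b_j\mapsto v_j/u_j$ is injective (the quantity $a_1(a_2b_j+n)-a_2(a_1b_j+n)=(a_1-a_2)n$ is nonzero). Since $b_1\le b_2\le\cdots$ the $u_j$ are increasing, and a Taylor expansion of $(a_2/a_1)^{1/k}$ gives the key estimate
\[
\left|\frac{v_j}{u_j}-\theta\right|\ll_k \frac{|n|(a_2-a_1)}{a_1u_j^k},\qquad \theta:=\left(\tfrac{a_2}{a_1}\right)^{1/k},
\]
so each $v_j/u_j$ is a rational approximation to $\theta$ of quality $\asymp |n|(a_2-a_1)/u_j^k$. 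Bounding $m$ is therefore bounding the number of such approximations.

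If $\theta\in\Q$, say $a_2/a_1=(p/q)^k$ with $\gcd(p,q)=1$, then $(pu_j)^k-(qv_j)^k=n(q^k-p^k)$ is a \emph{fixed} nonzero difference of two $k$-th powers; comparing with the lower bound $v_j\ge(a_2b_1)^{1/k}\ge a_2^{2/k}$ forces $p,q\ll_k|n|^{O(1)}$, hence $|n(q^k-p^k)|\ll_k|n|^{O(1)}$. Factoring $X^k-Y^k=\prod_{d\mid k}\Phi_d(X,Y)$, the value $\Phi_k(pu_j,qv_j)$ divides $n(q^k-p^k)$, and since $\deg\Phi_k=\phi(k)\ge 3$ for $k\ge 5$ one applies the uniform bound of Bombieri--Schmidt on the number of solutions of the Thue equation $\Phi_k(pX,qY)=d$ over the divisors $d$ of $n(q^k-p^k)$, giving $m\ll_{k,\ep}|n|^{\ep}$; for $k=4$ one uses $\Phi_4=X^2+Y^2$ and a lattice-point count on circles instead. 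This is where $\phi(k)$ enters, and this case is comfortably within the claimed bound.

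The main case is $\theta\notin\Q$, where $F$ is an irreducible binomial form of degree $k$. Here I split the solutions at the threshold $Y_0:=\bigl(|n|(a_2-a_1)/a_1\bigr)^{1/(k-2)}$, which is where the gap principle becomes effective: for $u_j<u_\ell$ one has $|v_ju_\ell-v_\ell u_j|\ge1$, hence $|v_j/u_j-v_\ell/u_\ell|\ge(u_ju_\ell)^{-1}$, which combined with the displayed approximation gives $u_\ell\gg_k u_j^{k-1}/\bigl(|n|(a_2-a_1)\bigr)$ once $u_j\ge Y_0$, so the solutions with $u_j>Y_0$ grow doubly exponentially and, together with an effective bound for solutions of the binomial Thue equation $F(X,Y)=(a_2-a_1)n$ (effective irrationality measures for $\theta=(a_2/a_1)^{1/k}$, or the uniform Bombieri--Schmidt/Evertse count), number $\ll_k\log\bigl(|n|(a_2-a_1)+1\bigr)$. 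For the small solutions $u_j\le Y_0$ I use the ordering hypothesis $b_j\ge b_1\ge a_2$, which forces $u_j\ge a_2^{1/k}$: the window $[\,a_2^{1/k},Y_0\,]$ is nonempty only when $a_2^{(k-3)/k}\ll|n|$, i.e. $a_2\ll|n|^{k/(k-3)}$ --- this inequality is precisely the source of the exponent $k-3$, and forces $Y_0\ll|n|^{1/(k-3)}$ --- and then a dyadic Diophantine-approximation estimate for how often $u\theta$ lies within $\asymp|n|(a_2-a_1)/u^k$ of an integer bounds the number of small solutions. Splitting additionally on whether $a_2\ll|n|^{O(1)}$ (where Bombieri--Schmidt already gives $m\ll_{k,\ep}|n|^{\ep}$) and optimising the thresholds produces $m\ll_{k,\ep}|n|^{\phi(k)/((k-3)k)+\ep}$.

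The cubic case $k=3$ is excluded above because $k-3$ degenerates (the "small solution" window is never empty). Here I exploit the third element $a_3$: the numbers $a_1b_j+n$, $a_2b_j+n$, $a_3b_j+n$ are simultaneously cubes, yielding two independent cubic relations among $(u_j,v_j,w_j)$, i.e. integral points on a curve of genus $\ge1$; a gap principle together with explicit bounds for binary cubic Thue equations (work of Bennett, Akhtari, and others) and the analogous optimisation produces the exponent $34/27$. The main obstacle throughout is uniformity in $a_1$ and $a_2$, which in the irreducible case are \emph{not} bounded by any power of $|n|$: the argument must exploit that the gap principle and the effective irrationality measure for $(a_2/a_1)^{1/k}$ strengthen exactly as $a_2/a_1$ grows, so that the genuinely dangerous regime is $a_2\lesssim|n|^{k/(k-3)}$, where the small-solution count can be forced to be a fixed power of $|n|$. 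Arranging the Diophantine-approximation and Thue inputs to be simultaneously uniform in $a_1,a_2$ and explicit in $|n|$ is the technical heart of the proof, and is also why the cubic case, lacking the room provided by $k-3\ge1$, must be treated by the separate three-element argument.
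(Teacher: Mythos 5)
Your reduction to the Thue-type equation $a_2u^k-a_1v^k=(a_2-a_1)n$ and the gap principle for large solutions are reasonable, but the proposal has genuine gaps and the claimed exponents are never derived. The central failure is the small range. Writing $Y_0=(|n|(a_2-a_1)/a_1)^{1/(k-2)}$, the approximation condition you propose to count, $\|u\theta\|\ll |n|(a_2-a_1)/(a_1u^{k-1})$, is vacuous through most of the range $u\le Y_0$: once $u\le Y_0^{(k-2)/(k-1)}$ the admissible error exceeds $1$, so every $u$ qualifies, and even near $u=Y_0$ the condition is no stronger than $\|u\theta\|\ll 1/u$. No dyadic approximation count can therefore produce a power saving there; the arithmetic fact that $a_1b+n$ and $a_2b+n$ are exact $k$-th powers (not merely good approximations) must be used. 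This is precisely where the paper switches tools: it counts $b\le N$ with $a_1b+n$ and $a_2b+n$ simultaneously $k$-th power residues modulo many primes $p\equiv 1\pmod k$, via Weil's bound (Lemma~\ref{lem:Weil}) and Gallagher's larger sieve (Proposition~\ref{prop:sieve2}), and that sieve is the sole source of the factor $\phi(k)/k^2$ (and $\phi(3)/3^3$ in the cubic case). In your sketch $\phi(k)$ only enters in the degenerate case $a_2/a_1=(p/q)^k$, which you yourself estimate as $|n|^{\ep}$, and the final exponent $\phi(k)/((k-3)k)$ is asserted after ``optimising the thresholds'' with no derivation. Moreover, your claim that the window $[a_2^{1/k},Y_0]$ is nonempty only when $a_2^{(k-3)/k}\ll|n|$ is false when $a_2/a_1$ is large (take $a_1=1$: the condition $a_2^{(k-2)/k}\le|n|(a_2-1)$ holds for all $a_2\ge 2$), so the exponent $k-3$ does not arise this way; in the paper it comes from the gap principle for the $b_i$ (Corollary~\ref{cor:gap}), which requires $b_1\ge 2|n|^L$ with $L>k/(k-3)$ to force growth exponent $k-2-k/L>1$, after which the sieve is applied with $N=2|n|^{k/(k-3)+\delta}$.

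The large range and the cubic case are also not closed. Your count of large solutions, $\ll_k\log(|n|(a_2-a_1)+1)$, depends on $a_2$, and $a_2\le b_1$ can be arbitrarily large compared with $|n|$, so it does not give a bound in terms of $n$ alone; you acknowledge this uniformity problem but only gesture at a resolution. The paper's substitute is Proposition~\ref{prop:a1a2} (from \cite{Y24}, combining Evertse's absolute bound for Thue inequalities with the gap principle), which states uniformly in $a_1,a_2$ that at most $6$ elements of $B$ exceed $2|n|^{17}$; together with Corollary~\ref{cor:gap} for the middle range this closes the argument. Finally, for $k=3$ no genus-one machinery is used or needed: the same three-piece decomposition with $|A|=3$ in the sieve gives $\phi(3)\cdot 17/3^3=34/27$, whereas your sketch offers no mechanism that would produce that exponent.
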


Naturally, one may ask what would happen if the condition on the ordering of the elements in $A$ and $B$ is dropped. Equivalently, we can ask the following question.
\begin{question}\label{q}
Fix $k\geq 2$ and $n\neq 0$. What is the smallest integer $\ell$, such that if $(A,B)$ is a bipartite Diophantine tuple with property $BD_k(n)$ and $|A|=\ell$, then $|B|$ is absolutely bounded (possibly as a function of $k$ and $n$)? 
\end{question}

It is shown in \cite[Theorem 2.3]{Y24} that, for Question~\ref{q}, we have $\ell\leq 9$ for $k=3$, $\ell\leq 6$ for $k=4$, $\ell\leq 5$ for $k=5$, and $\ell\leq 4$ for $k\geq 6$. However, as we discussed earlier, when $k=2$, we do not know any upper bound on $\ell$. In~\cite[Remark 4.9]{Y24}, the second author asked if some improved bounds can be proved in Question~\ref{q} under the ABC conjecture. We answer this question in the affirmative in Theorem~\ref{thm:ABC} below; in particular, we show that $\ell=2$ if $k\geq 6$, $\ell\leq 3$ if $k\in \{4,5\}$, and $\ell\leq 5$ if $k=3$.

\begin{thm}\label{thm:ABC}
Assume the ABC conjecture. Let $(A,B)$ be a bipartite Diophantine tuple with property $BD_k(n)$ for some integers $n$ and $k$ with $n\neq 0$ and $k\geq 3$.
\begin{enumerate}
    \item If $|A|=2$ and $k\geq 6$, then for each $\ep>0$, 
    $$
|B|\ll_{k,\ep} |n|^{\frac{(k-1)\phi(k)}{k(k^2-6k+3)}+\ep}.
$$
 \item If $|A|=3$, then for each $\ep>0$, we have $|B|\ll_{\ep} |n|^{\frac{3}{8}+\ep}$ for $k=4$, and $|B|\ll_{\ep} |n|^{\frac{64}{625}+\ep}$ for $k=5$.
  \item If $|A|=5$ and $k=3$, then for each $\ep>0$, we have $|B|\ll_{\ep} |n|^{\frac{32}{81}+\ep}$.
\end{enumerate}
\end{thm}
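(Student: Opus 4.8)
Write $A=\{a_1<\cdots<a_\ell\}$ and $B=\{b_1<\cdots<b_m\}$, where $\ell=|A|\in\{2,3,5\}$ according to the case, and for $a\in A$, $b\in B$ write $ab+n=w_a(b)^k$ with $w_a(b)\in\N$ (discarding the $O(1)$ pairs with $ab+n\le 0$ when $n<0$, and restricting attention to $b$ with $a_1 b>|n|$, so that $w_a(b)\asymp(ab)^{1/k}$). The plan is to feed the tuple structure into the ABC conjecture through a family of Fermat--Catalan-type identities, and to combine the outcome with the counting behind Theorem~\ref{thm1}.

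The identities come from eliminating $b$ (resp.\ $n$). For $a_i<a_j$ in $A$ and $b,b'\in B$ one checks directly that
$$\big(w_{a_i}(b)\,w_{a_j}(b')\big)^k-\big(w_{a_i}(b')\,w_{a_j}(b)\big)^k=(a_j-a_i)\,n\,(b'-b),$$
and, when $\ell\ge 3$, collinearity of the points $(a,w_a(b)^k)$ gives, for any $a_1<a_2<a_3$ in $A$,
$$(a_3-a_2)\,w_{a_1}(b)^k+(a_2-a_1)\,w_{a_3}(b)^k=(a_3-a_1)\,w_{a_2}(b)^k.$$
When $\ell=5$ and $k=3$ one further combines several such relations to produce a relation among $k$-th powers whose right-hand side is small. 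Each of these is an equation of the shape (bounded integer combination of perfect $k$-th powers) $=$ (a quantity of size $\ll |n|\cdot a_\ell^{O(1)}\cdot(\text{a difference of }B\text{-elements})$), so it is amenable to the ABC conjecture.

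Applying the ABC conjecture and absorbing common factors, the radical of the relevant product is at most (the product of the $k$-th roots involved) times a bounded power of $a_\ell$ times $|n|$ times a difference of $B$-elements; using $w_a(b)\asymp(ab)^{1/k}$ this yields, for each pair of $B$-elements entering an identity, an inequality in which $b$ occurs with exponent $1-\tfrac{2}{k}$ (in the two-variable identities) or $1-\tfrac{3}{k}$ (in the three-variable identities), up to the refinement controlled by the primitive (cyclotomic) part of the differences of $k$-th powers, which accounts for the factor $\phi(k)$ in the final exponents. The choices $\ell=2$ for $k\ge 6$, $\ell=3$ for $k\in\{4,5\}$ and $\ell=5$ for $k=3$ are exactly those for which this exponent is positive, so that ABC delivers a genuine upper bound on the $B$-elements entering the identities, or, applied to consecutive pairs $b_t<b_{t+1}$, a super-polynomial gap bound $b_{t+1}-b_t\gg b_t^{1+\delta}$ with $\delta=\delta(k,\ell)>0$. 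Summing such gap bounds over the elements of $B$ below $\max A$, whose total spread is less than $\max A$, shows that this part of $B$ has few elements; the elements of $B$ above $\max A$ are ordered relative to $A$, and for $\ell=2$ (where $k\ge 6$) this part is bounded by Theorem~\ref{thm1}, while in the remaining cases the ordering forces the roots $w_a(b)$ to be large and makes the same ABC estimates especially efficient. Optimizing the threshold that separates the small and large elements of $B$ and tracking the exponents through the ABC inequality and the gap principle then produces the stated powers of $|n|$.

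The main obstacle is uniformity in $A$. The estimates above involve the a priori unbounded elements of $A$, so one needs a dichotomy according to whether $\max A$ lies above or below an appropriate power of $|n|$: when $\max A$ is small the exponent bookkeeping closes as sketched, and when $\max A$ is large one must show, again via ABC, that the curves and Thue-type equations cut out by the identities above carry only $O_k(1)$ relevant integral points --- and make this count genuinely independent of $A$. Balancing this dichotomy, choosing all thresholds optimally, and controlling the loss incurred by bounding radicals by the integers themselves is where essentially all the difficulty lies.
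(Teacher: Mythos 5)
Your identities are essentially the right ABC inputs: the relation $\alpha_2x^k-\alpha_1y^k=n(\alpha_2-\alpha_1)$ and the four-term identity $\big(w_{a_i}(b)w_{a_j}(b')\big)^k-\big(w_{a_i}(b')w_{a_j}(b)\big)^k=n(a_j-a_i)(b'-b)$ are exactly what the paper feeds into ABC (its Lemmas on $\beta<C|n|^{L_1}\alpha_2^{L_2}$ and $\beta_1<D|n|^{L_3}\alpha_2^{L_4}$), and the idea of combining these with a gap principle to show that only $O_{k,\ep}(1)$ elements of $B$ exceed a suitable power of $|n|$ matches the paper's treatment of the large range (including the reversal of roles of $A$ and $B$ to bound $\max A$ polynomially in $|n|$ and $b_2$, which makes your dichotomy on whether $\max A$ is large unnecessary).

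However, there is a genuine gap in how you count the elements of $B$ \emph{below} the threshold, and this is where the stated exponents actually come from. ABC gives no information there: in $\alpha_2x^k-\alpha_1y^k=n(\alpha_2-\alpha_1)$ the right-hand side is comparable to (or larger than) the $k$-th powers once $b\ll|n|^{O(1)}$ is not large relative to $|n|$, and the unconditional gap principle (Lemma~\ref{gap_principle}) also requires products of size at least $2|n|$, so your proposed gap bounds $b_{t+1}-b_t\gg b_t^{1+\delta}$ simply do not exist in that range; summing gaps ``over the elements of $B$ below $\max A$'' cannot bound this part. The paper instead counts the small elements with Gallagher's larger sieve combined with Weil's bound (Proposition~\ref{prop:sieve2}): for primes $p\equiv1\pmod k$ one has $|B_p|\lesssim p/k^{|A|}$, and the density $1/\phi(k)$ of such primes is precisely the source of the factor $\phi(k)$, producing $|B_1|\ll|n|^{\theta_k\phi(k)/k^{|A|}+\ep}$ with $\theta_k$ chosen from the ABC/gap analysis (e.g.\ $\theta_k=\frac{k^2-k}{k^2-6k+3}$ for $k\ge6$, giving the exponent $\frac{(k-1)\phi(k)}{k(k^2-6k+3)}$, and $\theta_4\phi(4)/4^3=\frac38$, $\theta_5\phi(5)/5^3=\frac{64}{625}$, $\theta_3\phi(3)/3^5=\frac{32}{81}$). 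Your attribution of $\phi(k)$ to the ``primitive (cyclotomic) part of the differences of $k$-th powers'' does not correspond to a workable mechanism and would not yield the $k^{-|A|}$ factor; without a sieve-type counting over residue classes the main term of the theorem is out of reach, so the proposal as written does not close.
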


We remark that there are some known bounds for Question~\ref{q} under other conjectures. As remarked in \cite[Remark 4.9]{Y24}, if $k\geq 5$, we can predict from the uniformity conjecture \cite{CHM} that the answer to Question~\ref{q} is $\ell=2$ by considering superelliptic curves of the form $C:y^k=(a_1x+n)(a_2x+n)$ with $a_1\neq a_2$. 
Similarly, using the uniformity conjecture, for $k \in \{3,4\}$, we can predict that $\ell \leq 3$ by considering superelliptic curves of the form $C: y^k=(a_1 x +n)(a_2 x +n) (a_3 x+n)$ with $a_1,a_2,a_3$ distinct; for $k=2$, we can predict that $\ell \leq 5$ by considering hyperelliptic curves of the form
$$
C: y^2=(a_1x+n)(a_2x+n)(a_3x+n)(a_4x+n)(a_5x+n)
$$
with $a_1,a_2,a_3,a_4,a_5$ distinct. On the other hand, Croot and the second author \cite[Theorem 2.10]{CY25} showed that a special case of Lander--Parkin--Selfridge conjecture~\cite{LPS67} on sums of $k$-th powers (see \cite[Conjecture 2.9]{CY25}) implies that the answer to Question~\ref{q} is $\ell=2$ for $k\geq 25$. 

\medskip

While bipartite Diophantine tuples are of independent interest, they have also played important roles in recent studies of other variants of Diophantine tuples. For example, Croot and the second author \cite{CY25} recently found that they are curial in the study of ``Diophantine powersets" (namely, a set $A$ of positive integers such that $ab+n$ is a \emph{perfect power} for all $a,b\in A$ with $a\neq b$, where $n\neq 0$ is a fixed integer). They are also key ingredients in the proof of a conjecture of Hajdu and S\'{a}rk\"{o}zy on the multiplicative irreducibility of all ``small perturbations" of the set of shifted $k$-th powers \cite{HS20}, recently established for $k\geq 3$ by the second author \cite{Y26}. Next, we discuss some new connections. 

A. Kihel and O. Kihel \cite{KK01} considered a different generalization of Diophantine tuples. They defined a $P_n^{(k)}$-set of to be a set $A=\{a_1,a_2,\ldots, a_m\}$ of positive integers such that $\prod_{j \in J} a_j \ + n$ is a perfect $k$-th power for each subset $J$ of $\{1,2,\ldots, m\}$ with size $k$. They proved that any $P_n^{(k)}$-set is finite. Similar generalizations of Diophantine tuples have been studied over finite fields; see, for example, \cite{KYY25, YY25}. In particular, in \cite{YY25}, Yoo and the second author showed that if $q$ is an odd prime power and $\ell \geq 2$ is an integer, then there is a subset $A$ of the finite field $\F_q$ with size $|A|\gg (\log q)/\ell$ such that $\prod_{a \in B}a\ +1$ is a square $\F_q$ for each subset $B$ of $A$ with size $\ell$. Inspired by these papers, we prove the following result, which, in particular, makes the result in \cite{KK01} effective.

\begin{thm}\label{thm:prodl}
Let $k,n,\ell$ be integers with $\ell \geq 2, k\geq 3$ and $n\neq 0$. If $A$ is a set of positive integers such that there exists a positive integer $a_0$, such that 
$a_0\prod_{a \in B}a\ +n$ is a perfect $k$-th power for each subset $B$ of $A$ with size $\ell$, then $$|A|\ll_{k} \ell+ \frac{\log (|n|+1)}{\ell}.$$
\end{thm}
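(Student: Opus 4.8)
The plan is to reduce Theorem~\ref{thm:prodl} to the bipartite setting so that Theorem~\ref{thm:logn} can be applied. Fix the set $A$, the integer $a_0$, and the exponent $\ell$. The key idea is to split a size-$\ell$ subset into two blocks of roughly equal size and absorb one block (together with $a_0$) into a new ``shift'' or into one side of a bipartite pair. Concretely, I would first dispose of the easy regime: if $|A| \le 2\ell$ then the bound $|A| \ll_k \ell$ is immediate, so we may assume $|A| > 2\ell$, and in particular $A$ contains many disjoint $\ell$-subsets. Write $\ell = \ell_1 + \ell_2$ with $\ell_1 = \lfloor \ell/2 \rfloor \ge 1$ and $\ell_2 = \lceil \ell/2 \rceil \ge 1$. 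The goal is to exhibit, inside $A$, a bipartite Diophantine tuple $(\mathcal{A},\mathcal{B})$ with property $BD_k(N)$ where $N$ is a controlled shift, and where $\mathcal{A}$ and $\mathcal{B}$ are built from products of $\ell_1$-element and $\ell_2$-element subsets respectively; then Theorem~\ref{thm:logn} gives $\min\{|\mathcal{A}|,|\mathcal{B}|\} \ll_k \log(|N|+1)$, which I will translate back into a bound on $|A|$.

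The main technical step is choosing the blocks so that the hypothesis ``$a_0 \prod_{a\in B} a + n$ is a $k$-th power'' becomes a statement of the form ``(product over $\mathcal{A}$-part) $\times$ (product over $\mathcal{B}$-part) $+ N$ is a $k$-th power.'' Here is how I would set it up. Pick $\ell_2$ fixed elements of $A$, say $c_1,\dots,c_{\ell_2}$, and set $C = a_0 \prod_{i} c_i$. Now consider all $\ell_1$-element subsets $B'$ of $A \setminus \{c_1,\dots,c_{\ell_2}\}$ that are disjoint from some second reservoir: actually the cleaner route is to take two disjoint reservoirs $R_1, R_2 \subseteq A$ and let $\mathcal{A} = \{\prod_{a\in S} a : S \subseteq R_1, |S| = \ell_1\}$ scaled appropriately and $\mathcal{B} = \{a_0\prod_{a\in T} a : T \subseteq R_2, |T| = \ell_2\}$; then for $S \subseteq R_1$ and $T \subseteq R_2$ disjoint, $B = S \cup T$ is an $\ell$-subset of $A$, so $a_0 \prod_{a\in S\cup T} a + n = (\prod_{a\in S}a)(a_0\prod_{a\in T}a) + n$ is a $k$-th power. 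Thus $(\mathcal{A}, \mathcal{B})$ is a bipartite Diophantine tuple with property $BD_k(n)$ — here $N = n$, so the shift is not inflated at all. By Theorem~\ref{thm:logn}, $\min\{|\mathcal{A}|, |\mathcal{B}|\} \ll_k \log(|n|+1)$. Since $|\mathcal{A}| = \binom{|R_1|}{\ell_1}$ and $|\mathcal{B}| = \binom{|R_2|}{\ell_2}$ (products of distinct sets of integers are distinct — this needs a brief justification via unique factorization, after discarding repeats), choosing $|R_1| = |R_2| = \lfloor |A|/2 \rfloor$ gives $\binom{\lfloor|A|/2\rfloor}{\ell_i} \ll_k \log(|n|+1)$ for the smaller side, and since $\binom{r}{s} \ge (r/s)^s$, taking logarithms yields $s \log(r/s) \ll_k \log\log(|n|+1) + \cdots$; rearranging produces $r \ll_k s \cdot (\log(|n|+1))^{1/s} \ll_k \ell + \log(|n|+1)/\ell$ after optimizing, using that $x^{1/s} \ll 1 + (\log x)/s$ type estimates hold. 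That last inequality-juggling step is routine but is where the claimed shape $\ell + \log(|n|+1)/\ell$ emerges.

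The main obstacle I anticipate is two-fold. First, making $\mathcal{A}$ and $\mathcal{B}$ genuinely have size $\approx \binom{|A|/2}{\ell/2}$ rather than something smaller: I must be careful that distinct $\ell_1$-subsets of $R_1$ give distinct integer products, which fails in general (e.g. $\{1,6\}$ vs $\{2,3\}$), so I should instead only claim a lower bound on $|\mathcal{A}|$ — the number of distinct products of $\ell_1$-subsets of an $r$-set of positive integers is still at least, say, $\binom{r}{\ell_1}/r^{\ell_1}$ or one can preprocess $A$ to make all subset products distinct at the cost of a constant factor, or more carefully bound via the fact that at most $\binom{r}{\ell_1}$ subsets map to at most that many products and a product has boundedly many preimages only if... — honestly the cleanest fix is: the number of distinct values $\prod_{a\in S}a$ over $|S|=\ell_1$, $S\subseteq R_1$, is at least $\binom{r-\ell_1+1}{1}\cdots$; actually it is at least $r - \ell_1 + 1$ trivially (vary one element), which already suffices if we only need $|A| \ll_k \ell + \log(|n|+1)$ up to the stated shape — wait, that only gives $|A| \ll_k \ell + \log(|n|+1)$, not divided by $\ell$. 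To get the division by $\ell$ I really do need the binomial-sized lower bound, so the honest obstacle is establishing that an $r$-element set of positive integers has $\gg_\ell \binom{r}{\ell}$-ish many distinct $\ell$-fold products; a standard way is to first replace $A$ by a subset of size $\Omega(|A|)$ consisting of elements with pairwise-coprime ``cores'' or to pass to elements that are each divisible by a distinct prime, after which all subset products are distinct; this preprocessing only loses a constant factor depending on nothing, so it is harmless. Second, the bookkeeping to pass from $\binom{r}{s} \ll_k X$ to $r \ll_k s + (\log X)/s$-type bounds: one splits into the cases $r \le 2s$ (trivial) and $r > 2s$ (then $\binom{r}{s} \ge (r/s)^s \ge 2^s$ and also $\ge (r/s)^s$, giving both $s \ll \log X$ and $s\log(r/s) \ll \log X$, hence $r \ll s \cdot X^{1/s} \ll s + (\log X)/s$ since $X^{1/s} = e^{(\log X)/s} \ll 1 + (\log X)/s$ when... ) — this requires the elementary inequality $e^t \le 1 + 2t$ fails for large $t$, so instead one argues: if $r \ge 4s$ then $(r/s)^s \le X$ forces $s\log 4 \le s \log(r/s) \le \log X$, i.e. $s \ll \log X$, and then $r/s \le X^{1/s}$; combined with $\log(r/s)\le (\log X)/s$ one gets $r \le s e^{(\log X)/s}$, and since also $s \ll \log X$ we may bound $e^{(\log X)/s}$ crudely when $s$ is comparable to $\log X$, while when $s \ll \log X$ is far smaller... the final clean statement is $r \ll s + (\log X)/s \cdot (\text{something})$ — I will carry out this optimization carefully in the actual proof, as it is where the exact exponent shape is pinned down, but it is elementary throughout. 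With $X \asymp_k \log(|n|+1)$ we obtain $|A| \ll_k \ell + \log(|n|+1)/\ell$ as claimed.
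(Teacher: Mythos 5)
Your reduction to the bipartite setting is exactly the right one, and it is the paper's: split $A$ into two disjoint halves, take products of $\lfloor\ell/2\rfloor$-subsets of one half and $\lceil\ell/2\rceil$-subsets of the other (absorbing $a_0$ into one side), and observe that this gives a bipartite Diophantine tuple with property $BD_k(n)$ — the shift is not inflated — to which Theorem~\ref{thm:logn} applies. The gap is in the step you yourself flag as the obstacle: lower-bounding the number of \emph{distinct} subset products. Your intended bound $|\mathcal{A}|\approx\binom{|R_1|}{\ell_1}$ is false in general, and the proposed repair (pass to a positive-proportion subset with pairwise-coprime cores, or with each element divisible by its own prime, so that all subset products become distinct) cannot work: take $A\subseteq\{2,4,8,\ldots,2^r\}$. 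Every subset of $A$ again consists of powers of $2$, no two elements are coprime, and the number of distinct $\ell$-fold products is the number of distinct sums of $\ell$ distinct exponents, which is only about $\ell(r-\ell)+1$ — linear in $r$, nowhere near $\binom{r}{\ell}$. So no preprocessing with constant-factor loss can restore a binomial-size bound, and your fallback bound $r-\ell_1+1$ (vary one element) is, as you note, too weak to produce the division by $\ell$.

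The correct ingredient, and the one the paper uses (Lemma~\ref{lem:hfold}), is the restricted sumset bound of Dias da Silva--Hamidoune applied to the logarithms of the elements: for any finite set $S$ of positive integers and any $h$, the $h$-fold restricted product set satisfies $|S^{(h)}|\geq h(|S|-h)+1$, and the powers-of-$2$ example shows this is sharp. With this in hand the proof closes immediately and with no delicate optimization: assuming $|A|\geq 4\ell$, both halves $A_1,A_2$ have size at least $2\ell$, so with $h=\lfloor\ell/2\rfloor$ both $|A_1^{(h)}|$ and $|A_2^{(\ell-h)}|$ are $\gg \ell|A|$; Theorem~\ref{thm:logn} then gives $\ell|A|\ll_k\log(|n|+1)$, i.e.\ $|A|\ll_k\log(|n|+1)/\ell$, which together with the trivial case $|A|<4\ell$ is exactly the claimed bound. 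In short: right architecture, but the quantitative heart of the argument (a superlinear-in-$|A|$, specifically $\gg\ell|A|$, lower bound on distinct subset products) is missing and cannot be obtained the way you propose; it requires the Erd\H{o}s--Heilbronn-type bound rather than distinctness of products.
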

Note that in the case $\ell=2$ and $a_0=1$, such a set $A$ is precisely a Diophantine tuple with property $D_k(n)$; when $\ell=k$ and $a_0=1$, such a set $A$ is precisely a $P_n^{(k)}$-set. 

\medskip

We also explore the connection between bipartite Diophantine tuples and multiplicative Hilbert cubes in shifted $k$-th powers. This is motivated by a variant of Diophantine tuples studied by Banks, Luca, and Szalay \cite{BLS09}. Given $A \subseteq \N$, they say that $S \subseteq \N$ is \emph{strongly $A$-Diophantine} if $1+\prod_{n\in R}n \in A$ for every finite subset $R \subseteq S$, where the product of elements in an empty set is defined to be $1$. We also refer to a relevant recent paper by Dujella and Szalay \cite{DS25}, where they showed that there are infinitely many integers $a,b,c\geq 2$ such that $ab+1,bc+1,ca+1,abc+1$ are all perfect squares. On the other hand, Bugeaud~\cite{B04} and Bennett~\cite{B07} studied whether it is possible for $a+1,ab+1, ab^2+1$ to be all $k$-th powers for some fixed $k$, where $a,b$ are positive integers with $a\geq 2$. They also studied the same question for $a+1, b+1, ab+1$, and other variants.

Motivated by these variants of Diophantine tuples studied in the literature, we study sets $A$ such that all subset products are shifted $k$-th powers, and more generally, multiplicative Hilbert cubes contained in shifted $k$-th powers. 
Multiplicative Hilbert cubes have been studied extensively; see, for example \cite{H08, S22}. Let $a_0$ be a positive integer and let $a_1,a_2,\ldots, a_d$ be distinct positive integers; the \emph{multiplicative Hilbert cube}
 $$
H^\times (a_0;a_1,a_2,\ldots, a_d)=\bigg\{a_0 \prod_{i \in I} a_i: I \subseteq \{1,2,\ldots, d\}\bigg\}
$$
is said to have dimension $d$. We are interested in the following question on bounding the maximum possible dimension of a multiplicative Hilbert cube contained in shifted $k$-th powers.

\begin{question}\label{Q2}
Given integers $k,n$ with $k\geq 2$ and $n\neq 0$, what's the maximum dimension of a multiplicative Hilbert cube contained in $\{x^k-n: x\in \N\}$? 
\end{question}

Question~\ref{Q2} essentially asks how to measure the largest possible multiplicative complexity of the set of shifted $k$-th powers. We expect that the maximum possible dimension of multiplicative Hilbert cubes is much smaller than that of bipartite Diophantine tuples or $P_{n}^{(k)}$ sets, as the defining conditions for multiplicative Hilbert cubes impose substantially more multiplicative constraints. 

As another motivation, Question~\ref{Q2} is a multiplicative analogue of a well-known question on (additive) Hilbert cubes contained in perfect $k$-th powers. To be precise, if $a_0$ is a nonnegative integer, and $a_1,a_2,\ldots, a_d$ are positive integers, then we   
    define the \emph{Hilbert cube} 
$$
H(a_0;a_1,a_2,\ldots, a_d)=\bigg\{a_0+ \sum_{i \in I} a_i: I \subseteq \{1,2,\ldots, d\}\bigg\}.
$$
The best-known bound is due to Dietmann and Elsholtz \cite{DE12, DE15}, where they showed that if $k\geq 2$, and $H(a_0;a_1,a_2,\ldots, a_d)\subseteq \{1,2,\ldots, N\} \cap \{x^k: x\in \N\}$, then $d\ll_{k} \log \log N$.

Now we discuss the connection of Question~\ref{Q2} and bipartite Diophantine tuples. Assume $H^\times (a_0;a_1,a_2,\ldots, a_d) \subseteq \{x^k-n: x\in \N\}$. Note that if $a_0=1$ and $A=\{a_1,a_2,\ldots, a_d\}$, then $H^\times (a_0;a_1,a_2,\ldots, a_d)$ is the collection of all subset products of $A$; in particular, $A$ is a Diophantine tuple with property $D_k(n)$. In general, observe that $(\{a_0a_1, a_0a_2, \ldots, a_0a_{\lfloor d/2\rfloor}\}, \{a_{\lfloor d/2\rfloor+1}, \ldots, a_d\})$ forms a bipartite Diophantine tuple with property $BD_k(n)$. Thus, for $k\geq 3$, it follows immediately from Theorem~\ref{thm:logn} that $d\ll_{k}\log (|n|+1)$. One hopes that this ``trivial" upper bound can be improved significantly. Indeed, for $k\geq 3$, by taking $\ell=\sqrt{\log (|n|+1)}$, Theorem~\ref{thm:prodl} immediately implies the following corollary.

\begin{cor}\label{cor:cube}
Let $k,n$ be integers with $k\geq 3$ and $n\neq 0$. Assume that $a_0$ is a positive integer, and $a_1,a_2,\ldots, a_d$ are distinct positive integers such that $H^\times (a_0;a_1,a_2,\ldots, a_d) \subseteq \{x^k-n: x \in \N\}$. Then we have $d\ll_{k} \sqrt{\log (|n|+1)}$.    
\end{cor}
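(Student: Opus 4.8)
The plan is to deduce Corollary~\ref{cor:cube} from Theorem~\ref{thm:prodl} by choosing the free parameter $\ell$ of that theorem to be of size roughly $\sqrt{\log(|n|+1)}$. First I would unwind the hypothesis: the containment $H^\times(a_0;a_1,\ldots,a_d)\subseteq\{x^k-n:x\in\N\}$ says precisely that $a_0\prod_{i\in I}a_i+n$ is a perfect $k$-th power for every $I\subseteq\{1,2,\ldots,d\}$, so restricting to subsets $I$ of a fixed size $\ell$ shows that the set $A=\{a_1,a_2,\ldots,a_d\}$ — which has exactly $d$ elements, since the $a_i$ are distinct — satisfies the hypothesis of Theorem~\ref{thm:prodl} with this value of $a_0$, for every integer $\ell$ with $2\le\ell\le d$. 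Hence Theorem~\ref{thm:prodl} gives $d=|A|\ll_k \ell+\frac{\log(|n|+1)}{\ell}$ for each such $\ell$.

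It then remains to pick $\ell$. If $d<\sqrt{\log(|n|+1)}$ there is nothing to prove, and the case $d\le 1$ is trivial, so assume $d\ge 2$. If $\sqrt{\log(|n|+1)}<2$ — that is, if $|n|$ is bounded by an absolute constant — then taking $\ell=2$ in the bound above gives $d\ll_k 2+\tfrac12\log(|n|+1)\ll_k 1$, and since $\sqrt{\log(|n|+1)}\ge\sqrt{\log 2}$ is bounded below by a positive absolute constant, this is $\ll_k\sqrt{\log(|n|+1)}$. In the remaining range, where $\sqrt{\log(|n|+1)}\ge 2$ and $d\ge\lfloor\sqrt{\log(|n|+1)}\rfloor$, I would take $\ell=\lfloor\sqrt{\log(|n|+1)}\rfloor\ge 2$; then $\ell\le\sqrt{\log(|n|+1)}$ while $\ell\ge\sqrt{\log(|n|+1)}-1\ge\tfrac12\sqrt{\log(|n|+1)}$, so both $\ell$ and $\log(|n|+1)/\ell$ are $O(\sqrt{\log(|n|+1)})$, and Theorem~\ref{thm:prodl} yields $d\ll_k\sqrt{\log(|n|+1)}$, as claimed.

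There is no genuine obstacle: all the substance is contained in Theorem~\ref{thm:prodl}, and the proof of the corollary amounts to a one-line parameter optimization together with routine bookkeeping to keep $\ell$ in the admissible range $2\le\ell\le d$ and to dispose of the degenerate cases where $d$ or $|n|$ is small. One could also avoid the case split by setting $\ell=\max\{2,\min\{d,\lceil\sqrt{\log(|n|+1)}\rceil\}\}$ once and for all, at the cost of a slightly messier verification.
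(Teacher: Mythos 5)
Your proposal is correct and is essentially the paper's own argument: the corollary is obtained by applying Theorem~\ref{thm:prodl} with $\ell$ of order $\sqrt{\log(|n|+1)}$, which the paper states in one line before the corollary. Your extra bookkeeping (integrality of $\ell$, small $|n|$, small $d$, keeping $\ell\ge 2$) is harmless and merely makes explicit what the paper leaves implicit; note that the constraint $\ell\le d$ is not actually needed, since Theorem~\ref{thm:prodl} already disposes of the case $|A|<4\ell$ trivially.
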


We also prove some much better bounds under the additional assumption that $a_0$ is small. The next theorem addresses the case that $a_0=1$. Here, we did not attempt to optimize the bound, and a better bound can likely be achieved with extra work.

\begin{thm}\label{thm:a0=1}
Let $k,n$ be integers with $k\geq 2$ and $n\geq 1$. If $A$ is a set of positive integers such that $\prod_{a \in B}a\ +n$ is a perfect $k$-th power for each subset $B$ of $A$, then $|A|\leq 132$ for $k=2$ and $|A|\leq 9$ for $k\geq 3$.
\end{thm}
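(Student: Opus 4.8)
The plan is to use the exponentially many hypotheses directly, without routing through the bipartite Diophantine tuple bounds (which only give bounds growing with $n$). First I would record the consequences of small subsets: taking $B=\emptyset$ forces $1+n=m_0^k$ for some integer $m_0\geq2$, so $n\geq 2^k-1$; taking singletons shows $a+n$ is a $k$-th power for every $a\in A$. Since at most one element of $A$ equals $1$, and deleting it affects neither the hypothesis nor $|A|$ by more than one, I may assume every element of $A$ is at least $2$.

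The key step is an elementary identity. For distinct $a,b\in A$ and any $B\subseteq A\setminus\{a,b\}$, writing $P=\prod_{c\in B}c$, the four subset products $P,\,aP,\,bP,\,abP$ all lie in the hypothesis, say $P+n=x^k$, $aP+n=y^k$, $bP+n=z^k$, $abP+n=w^k$, and expanding gives
\[(xw)^k-(yz)^k=(P+n)(abP+n)-(aP+n)(bP+n)=n(a-1)(b-1)P.\]
Since $a,b\geq2$ and $n\geq1$, the right side is positive, so $xw\geq yz+1$, whence $(xw)^k-(yz)^k\geq k(yz)^{k-1}=k\bigl((aP+n)(bP+n)\bigr)^{(k-1)/k}$. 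From this I extract two facts. Taking $P=1$ and $(a+n)(b+n)>n^2$ gives $(a-1)(b-1)\geq kn^{(k-2)/k}$ for all distinct $a,b\in A$; in particular every element of $A$ except possibly the smallest exceeds $\sqrt{k}\,n^{(k-2)/(2k)}$. Letting $B$ range over all subsets of $A\setminus\{a,b\}$ and using $(aP+n)(bP+n)\geq abP^2$ gives that every subset product $P$ of $A\setminus\{a,b\}$ satisfies $P\leq\bigl(n(a-1)(b-1)/(k(ab)^{(k-1)/k})\bigr)^{k/(k-2)}$.

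For $k$ not too small these two bounds finish the job. Applying them with $a,b$ the two smallest elements $a_1<a_2$ of $A$: on one hand $\prod_{i\geq3}a_i$ is a product of $|A|-2$ distinct integers each exceeding $\sqrt{k}\,n^{(k-2)/(2k)}$, and on the other hand it is at most $\bigl(n(a_1a_2)^{1/k}/k\bigr)^{k/(k-2)}$, where $a_1a_2<a_3^2$ is itself bounded by a fixed power of $n/k$. Comparing the exponents of $n$ then forces $|A|<2+\frac{2k^2}{(k-2)(k-4)}$, which is at most $9$ once $k\geq7$.

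The main obstacle will be the small-$k$ range. For $k\in\{3,4,5,6\}$ the pair bound above is too lossy (and its final inequality even degenerates when $k\leq4$), so I would sharpen it by running the same expansion for triples and quadruples of elements — e.g.\ $(1+n)(abc+n)-(a+n)(bc+n)=n(a-1)(bc-1)$ and its analogues, which feed the same ``a positive difference of $k$-th powers is large'' mechanism — to cut down to finitely many explicitly bounded configurations and then rule those out by a direct, possibly computer-assisted, check, yielding $|A|\leq9$ for all $k\geq3$. The case $k=2$ is genuinely harder: the gap estimate weakens to $(xw)^2-(yz)^2\geq2yz+1$, which cannot be iterated. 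Here one works instead with the factorization $(xw-yz)(xw+yz)=n(a-1)(b-1)P$ together with divisor estimates, combined with the known bounds on Diophantine tuples with property $D_2(n)$ and a finite case analysis; the accumulated losses are what make the resulting bound $|A|\leq132$ so much weaker (and, as the paper notes, not optimized).
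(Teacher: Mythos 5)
Your identity $(P+n)(abP+n)-(aP+n)(bP+n)=n(a-1)(b-1)P$ and the resulting gap/size dichotomy do give a correct argument for large $k$ (your count $|A|\leq 2+\frac{2k^2}{(k-2)(k-4)}\leq 9$ indeed works once $k\geq 7$, and is in the same spirit as the gap principle the paper uses). But the theorem's content lies precisely in the cases you leave as declarations of intent, and there the proposal has genuine gaps. For $k\in\{3,4,5,6\}$ you propose to ``run the same expansion for triples and quadruples'' and then ``rule out finitely many explicitly bounded configurations by a computer-assisted check''; this is not a proof, and it cannot become one as stated, because the configurations your inequalities produce are bounded only in terms of $n$, which is unbounded, so no finite check is available. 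The missing ingredient is a stronger elementwise lower bound: since $1+n$ and $a_2+n$ are both $k$-th powers and $a_2\geq 2$, consecutive-power spacing gives $a_2\geq k n^{(k-1)/k}$, which is much stronger than your pairwise bound $\sqrt{k}\,n^{(k-2)/(2k)}$. With that bound, a single application of the paper's gap-principle corollary (Corollary~\ref{cor:a0X}, which for $|A|\geq 10$ forces $\prod_{j=6}^{10}a_j\leq n^{k/(k-2)}\leq n^3$) yields a contradiction for every $k\geq 3$, since $(kn^{(k-1)/k})^5>n^3$; your weaker bound $n^{(k-2)/(2k)}$ would not close this comparison even if you had the $n^3$ upper bound, which is why your small-$k$ plan stalls.

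The $k=2$ case is the more serious gap: ``divisor estimates combined with the known bounds on Diophantine tuples with property $D_2(n)$ and a finite case analysis'' cannot produce the uniform bound $132$, because the general known bounds on $M_2(n)$ grow like $\log n$. What the paper actually uses is Dujella's specific result that a Diophantine tuple with property $D_2(n)$ all of whose elements exceed $n^3$ has at most $21$ elements, together with a construction tailored to make that hypothesis hold: since each $a_j\geq 2\sqrt{n}$ for $j\geq 2$, products of six consecutive elements exceed $n^3$, and if $|A|\geq 133$ one obtains $22$ such block products forming a $D_2(n)$-tuple with all elements above $n^3$, a contradiction; this is exactly where the number $132$ comes from. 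Your factorization $(xw-yz)(xw+yz)=n(a-1)(b-1)P$ is a reasonable starting point for $k=2$, but without an absolute (not $n$-dependent) input of this kind the route you sketch does not reach any bound independent of $n$, let alone the stated $132$.
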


We also prove the following result that addresses the case when $a_0$ is small compared to $n$.

\begin{thm}\label{thm:a0small}
Let $\ep \in (0,1)$ be a real number. Assume that $H^\times (a_0;a_1,a_2,\ldots, a_d) \subseteq \{x^k-n: x \in \N\}$, where $k,n,a_0$ are positive integers with $k\geq 2$ and $a_0\leq n^{\frac{k-1}{k}-\ep}$, and $a_1, \ldots, a_d$ are distinct positive integers. Then $d\ll \frac{1}{\ep}$, where the implied constant is absolute.
\end{thm}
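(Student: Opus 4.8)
The plan is to run a ``gap argument'' along a nested chain of subset products. First I would make two harmless reductions: discard the element $1$ from $\{a_1,\dots,a_d\}$ if it occurs (this costs at most $1$ in $d$) and relabel so that $2\le a_1<a_2<\cdots<a_d$; and dispose of the case $n\le n_0$ for a suitable absolute constant $n_0$. In that case $a_0\le n^{\frac{k-1}{k}-\ep}<n\le n_0$, and writing $a_0+n=x_0^k$ with $x_0\ge 2$ (forced since $x_0^k\ge 2$) gives $2^k\le a_0+n\le 2n_0$, so $k$ is absolutely bounded; then Theorem~\ref{thm:prodl} applied with $\ell=2$ (legitimate because $a_0aa'+n$ is a $k$-th power for all distinct $a,a'\in A$) bounds $d$ by an absolute constant, which is $\ll 1/\ep$ since $\ep<1$. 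So from now on $n>n_0$.

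Next I would set up the chain: put $P_0=1$ and $P_t=a_1a_2\cdots a_t$, so that $a_0P_t+n=x_t^k$ for positive integers $x_0<x_1<\cdots<x_d$. Since $a_0+n=x_0^k>n$ we have $x_0^{k-1}>n^{\frac{k-1}{k}}$, and running the chain with any single generator $a_i$ in the first slot gives
\[
a_0a_i=x^k-n\ge (x_0+1)^k-n>x_0^k+kx_0^{k-1}-n=a_0+kx_0^{k-1},
\]
whence $a_i>\frac{k x_0^{k-1}}{a_0}>\frac{k n^{\frac{k-1}{k}}}{a_0}\ge k n^{\ep}$. In particular $P_t/P_{t-1}=a_t>kn^{\ep}$ for every $t$, so if $T$ denotes the largest index with $a_0P_T\le n$ then $(kn^{\ep})^{T}<P_T\le n$, giving
\[
T\le\frac{\log n}{\log k+\ep\log n}\le\frac{1}{\ep}.
\]

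It remains to bound $d-T$, the number of indices $t$ with $a_0P_t>n$, and this is where I expect the main difficulty to lie. Once $a_0P_t$ exceeds $n$ the shift becomes negligible next to $a_0P_t$ and the gap argument degenerates: knowing $a_0P_t>n$ gives no useful lower bound on the next generator, so a genuinely different input is needed. The plan is to exploit the extra rigidity of the bipartite structure $(\{a_0a_i\}_i,\{a_j\}_j)$ — a bipartite Diophantine tuple with property $BD_k(n)$ in which one part is a dilate of the other — together with the already-established bound $a_i\gg n^{\ep}$. Concretely, among subsets of a common size in the ``large'' range one would compare a subset $I$ with a swap-neighbour $I'=(I\setminus\{i\})\cup\{j\}$: the corresponding $k$-th powers differ by $a_0|P_I-P_{I'}|=a_0P_I\cdot\frac{|a_i-a_j|}{a_i}$, while consecutive $k$-th powers near $a_0P_I$ are spaced $\gg (a_0P_I)^{\frac{k-1}{k}}$ apart, so if one can locate a swap with $(a_0P_I)^{1/k}\cdot\frac{|a_i-a_j|}{a_i}<k$ the two $k$-th powers coincide, forcing $P_I=P_{I'}$ and hence $a_i=a_j$, contradicting distinctness.

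The obstacle in the last step is precisely that the generators active in the large range need not be clustered tightly enough for such a swap to exist; overcoming this should require either bounding the largest generator $a_d$ independently (after which the chain $a_0P_t$ cannot grow too long before the counting becomes impossible), or a pigeonhole/counting argument over many subsets of a common size showing that some pair of $s$-subset products lies in a multiplicative window short enough to beat the $k$-th-power spacing. Carrying one of these through would give $d-T\ll 1/\ep$, and combined with $T\le 1/\ep$ this yields $d\ll 1/\ep$ with an absolute implied constant.
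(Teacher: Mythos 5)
Your first half is sound and essentially reproduces the paper's opening step: from $a_0+n=x_0^k$ and $a_0a_i+n\ge (x_0+1)^k$ you get $a_i> kx_0^{k-1}/a_0\ge kn^{(k-1)/k}/a_0\ge kn^{\ep}$ for every $a_i\ge 2$, and the bound $T\le 1/\ep$ on the number of generators you can multiply in before the products exceed $n$ follows. But the proof stops exactly where the real work begins: you have no argument bounding $d-T$, i.e.\ the number of generators active once $a_0P_t>n$, and you say so yourself. The swap heuristic cannot be completed as stated: to force two shifted subset products into the same gap between consecutive $k$-th powers you need $|P_I-P_{I'}|/P_I\ll (a_0P_I)^{-1+\frac{k-1}{k}}=(a_0P_I)^{-1/k}\le n^{-1/k}$, a multiplicative window far finer than anything a pigeonhole over at most $2^d$ subset products of a fixed size can produce (the number of such windows covering the relevant range is polynomial in $a_0P_I$, not in $d$), and there is no reason the generators in the large range are clustered. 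The paper closes this regime with genuinely different tools: for $k\ge 3$ it applies the gap principle (Lemma~\ref{gap_principle}, packaged as Corollary~\ref{cor:a0X}) to show $a_0\prod_{j=d/2+1}^{d}a_j<n^{k/(k-2)}\le n^3$, which against $a_j\ge n^{\ep}$ immediately gives $d\ll 1/\ep$; for $k=2$ it invokes the anti-gap principle built on Bennett's simultaneous approximation theorem (Proposition~\ref{prop:antigap} and Corollary~\ref{cor:loga0}), giving $d\ll 1+\frac{\log n}{\log a_7}\ll 1/\ep$. Neither ingredient, nor a substitute for it, appears in your proposal.

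Two smaller points: your small-$n$ reduction applies Theorem~\ref{thm:prodl} with $\ell=2$, but that theorem requires $k\ge 3$, so the case $k=2$ with $n$ bounded is not covered by your reduction either; and since your main missing step is also what would be needed for $k=2$ with large $n$, the $k=2$ case of the theorem is entirely untreated. As it stands the proposal establishes only the ``small products'' half of the statement.
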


When $k=2$, we are not able to prove a result similar to Corollary~\ref{cor:cube}, where the upper bound on the dimension of multiplicative Hilbert cubes only depends on the shift. This is mainly due to the fact, mentioned earlier, that it is an open problem to show that for each $n\neq 0$, there is an absolute upper bound on $\min \{|A|, |B|\}$ for a bipartite Diophantine tuple $(A,B)$ with property $BD_2(n)$. Nevertheless, we prove the following theorem, which bounds the dimension of a multiplicative Hilbert cube contained in shifted squares in terms of its smallest generator. Such a result might be useful in estimating multiplicative Hilbert cubes, contained in shifted squares, that are obtained by extending smaller multiplicative Hilbert cubes, or with bounded generators.

\begin{thm}\label{thm:a1}
Assume that $H^\times (a_0;a_1,a_2,\ldots, a_d) \subseteq \{x^2-n: x \in \N\}$, where $a_0, a_1, \ldots, a_d$ are positive integers with $2\leq a_1<a_2<\ldots<a_d$, and $n$ is a nonzero integer. Then 
$$
d\ll 2^{\frac{\omega(n(1-a_{1}))}{2}}+\log |n|,
$$
where $\omega(n(1-a_{1}))$ counts the number of distinct prime factors of $n(1-a_1)$. In particular, 
$$d\leq 2^\frac{(1+o(1))\log (a_1|n|)}{2\log \log (a_1|n|)}.$$  
\end{thm}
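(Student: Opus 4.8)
The plan is to reduce the statement to controlling the integer solutions of a single Pell-type equation, and then to play a congruence classification against a gap principle. For the reduction: for every $I\subseteq\{2,3,\dots,d\}$, both subset products $a_0\prod_{i\in I}a_i$ and $a_0a_1\prod_{i\in I}a_i$ lie in $H^\times(a_0;a_1,\dots,a_d)\subseteq\{x^2-n:x\in\N\}$. Writing $b=\prod_{i\in I}a_i$ and letting $B:=H^\times(1;a_2,\dots,a_d)$ be the resulting multiplicative Hilbert cube of dimension $d-1$, we thus have, for each $b\in B$, positive integers $\alpha_b,\beta_b$ with $a_0b+n=\alpha_b^2$ and $a_0a_1b+n=\beta_b^2$, and subtracting yields
$$\beta_b^2-a_1\alpha_b^2=n(1-a_1)=:N.$$
Since $\{1,a_2,\dots,a_d\}\subseteq B$ consists of distinct elements, $|B|\ge d$, so it suffices to bound the number of distinct values $t_b:=a_0b$.

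\emph{The two ingredients.} For $b\neq b'$ in $B$, a short computation using $\alpha_b^2-\alpha_{b'}^2=t_b-t_{b'}$ and $\beta_c^2=a_1\alpha_c^2+N$ gives the key identity
$$(\alpha_b\beta_{b'}-\alpha_{b'}\beta_b)(\alpha_b\beta_{b'}+\alpha_{b'}\beta_b)=(t_b-t_{b'})\,N,$$
whose left factor $D_{b,b'}:=\alpha_b\beta_{b'}-\alpha_{b'}\beta_b$ is nonzero (otherwise $t_b=t_{b'}$). First, a \emph{gap principle}: once $t_b,t_{b'}$ exceed a bounded multiple of $|n|$ one has $\alpha_c\asymp\sqrt{t_c}$ and $\beta_c\asymp\sqrt{a_1t_c}$, so $\alpha_b\beta_{b'}+\alpha_{b'}\beta_b\asymp\sqrt{a_1t_bt_{b'}}$; hence if an integer $m\ge1$ divides $D_{b,b'}$, the identity forces
$$m\le|D_{b,b'}|\ll\frac{|t_b-t_{b'}|\,|N|}{\sqrt{a_1t_bt_{b'}}}<\frac{|N|}{\sqrt{a_1}}\sqrt{\tfrac{\max(t_b,t_{b'})}{\min(t_b,t_{b'})}},$$
so $\max(t_b,t_{b'})\gg\bigl(m\sqrt{a_1}/|N|\bigr)^2\min(t_b,t_{b'})$; in particular, if a family of $b$'s has $N\mid D_{b,b'}$ for all pairs, its $t$-values grow by a factor $\gg a_1\ge2$ at each step once past the threshold. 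Second, a \emph{congruence classification}: whenever $\gcd(\alpha_b,N)=1$, reducing $\beta_b^2\equiv a_1\alpha_b^2\pmod N$ shows $\beta_b\alpha_b^{-1}$ is a square root of $a_1$ modulo $N$, so $b$ belongs to one of $O(2^{\omega(N)})$ classes, and two $b$'s in the same class satisfy $N\mid D_{b,b'}$; the remaining $b$'s (with $\gcd(\alpha_b,N)>1$) are comparatively few and can be absorbed.

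\emph{Assembly, and the main obstacle.} Fixing a threshold $T=|n|^{O(1)}$, the gap principle bounds, inside each class, the number of $b$ with $t_b\le T$, and separately limits how many $b$ with $t_b>T$ can lie in a given class — here one must crucially use that the generators $a_1<\dots<a_d$ are distinct integers and that $B$ is closed under the partial-product relations, so that the multiplicative gaps produce a bound with no dependence on $\max_ia_i$. Run naively, this framework yields only a weaker estimate of the same flavour (roughly $2^{\omega(N)}$ times a power of $\log|n|$); sharpening it to the asserted $d\ll 2^{\omega(N)/2}+\log|n|$ with $N=n(1-a_1)$ requires a substantially more efficient bookkeeping of the classification — simultaneously tracking $\gcd(\alpha_b,N)$, $\gcd(\beta_b,N)$ and the square root of $a_1^{-1}$ modulo $N$, and feeding into the gap principle a divisor $m$ of $D_{b,b'}$ comparable to $|N|/\sqrt{a_1}$ rather than just $m=N$ — and this is the step I expect to be the main obstacle, since it is precisely where the full multiplicative Hilbert cube structure of $B$ (not merely the bipartite pair $(\{a_0,a_0a_1\},B)$) must be exploited. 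Finally, the displayed consequence is immediate from the maximal order estimate $\omega(M)\le(1+o(1))\log M/\log\log M$ applied with $M=|N|=|n|(a_1-1)\le a_1|n|$, together with the fact that $\log|n|$ has strictly smaller order than $2^{\omega(N)/2}$ whenever the latter is large.
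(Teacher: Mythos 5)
Your proposal does not close the argument, and you say so yourself: run as described it gives at best $2^{\omega(N)}$ times a power of $\log|n|$, and you flag the passage to the exponent $\omega(N)/2$ as an unresolved ``main obstacle''. That obstacle is exactly where the paper's proof lives, and the missing idea is a \emph{quadratic} pigeonhole rather than a linear one. In the case $a_0>n^6a_2^4$, the paper (Theorem 6.3) does not work with one element of the cube per generator; it builds a family of roughly $d^2$ elements of the cube (each of the form $a_i\prod_{j>d-\ell-t+3}a_j$ for varying $i$ and tail length), each of which yields a solution of the \emph{pair} of Pell equations $x^2-a_1z^2=n(1-a_1)$, $y^2-a_2z^2=n(1-a_2)$ sharing the same $z$. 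Solutions are classified by base solutions, of which there are at most $m=2^{\min\{\omega(n(1-a_1)),\,\omega(n(1-a_2))\}}$, so once $d^2\gg m$ three solutions land in one class; Bennett's gap principle (three same-class solutions with $z_1$ above a threshold force $z_3>z_1^3$) then produces an inequality of the form $\prod_{j=d-3\ell+7}^{d-\ell+2}a_j>\prod_{j=d-\ell+3}^{d}a_j^2$, contradicting $a_1<\cdots<a_d$. This is what yields $d\ll\sqrt{m}\le 2^{\omega(n(1-a_1))/2}$: the square-root saving comes from the quadratically large family and the cubing gap, not from finer bookkeeping of square roots of $a_1$ modulo $N$. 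Your gap principle within a class only gives growth of the $t$-values by a bounded factor, which, as you yourself note, cannot beat the cube's natural geometric growth and leaves a dependence on $\max_i a_i$; and your congruence classification (requiring $\gcd(\alpha_b,N)=1$, with the remaining $b$ ``absorbed'') is not justified, whereas the standard and rigorous device is the base-solution classification for Pell equations.

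You are also missing the source of the additive $\log|n|$ term. The paper splits on the size of $a_0$: when $a_0\le n^6a_2^4$ the Pell/pigeonhole machinery is not used at all; instead Corollary 3.3, which rests on the anti-gap principle of Proposition 3.1 (proved via Bennett's theorem on simultaneous approximation of $\sqrt{1+c_i/N}$, giving $b_2<b_1^{120}$ for a suitable bipartite pair extracted from the cube), bounds $d\ll\log(|n|+1)$. Your reduction to the single equation $\beta^2-a_1\alpha^2=n(1-a_1)$ via the pair $(\{a_0,a_0a_1\},B)$ has no mechanism to handle this regime, nor to produce the stated bound uniformly in $a_0$. The final ``in particular'' deduction from the maximal order of $\omega$ is fine, but the core of the theorem --- the exponent $\omega/2$ and the $\log|n|$ term --- is not established by your argument.
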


Our proofs of these new results combine various tools from additive combinatorics, Diophantine approximation, Diophantine equations, and sieve methods. Some of the ideas are inspired by \cite{BDHL11, CY25, D02, Y24}. Our proofs of the above results related to Hilbert cubes crucially depend on the many multiplicative constraints imposed by the defining conditions of multiplicative Hilbert cubes.

\medskip

\textbf{Notation.} In this paper,~$p$ always denotes a prime, and $\sum_p$ represents sums over all primes. We follow the Vinogradov notation $\ll$. We write $X \ll Y$ if there is an absolute constant $C>0$ so that $|X| \leq CY$. 

\textbf{Organization of the paper.} In Section~\ref{sec:prelim}, we prove several useful results related to the gap principle, sieve methods, and additive combinatorics. In Section~\ref{sec:antigap}, we prove an anti-gap principle for bipartite Diophantine tuples with property $BD_2(n)$. In Section~\ref{sec:main1}, we prove Theorems~\ref{thm1} and~\ref{thm:ABC}. In Section~\ref{sec:app}, we combine the tools developed in Section~\ref{sec:prelim} and Section~\ref{sec:antigap} to prove Theorems~\ref{thm:prodl},~\ref{thm:a0=1}, and~\ref{thm:a0small}. Finally, in Section~\ref{sec:Pell}, we prove Theorem~\ref{thm:a1}.

\section{Preliminaries}\label{sec:prelim}

\subsection{A gap principle and its applications}

The following lemma describes a useful gap principle.

\begin{lem}[{\cite[Lemma 3.6]{Y24}}]\label{gap_principle}
Let $k \geq 3$ and $n$ be a nonzero integer. Let $x,y,z,w$ are positive integers such that $x<y$, $z<w$, and $xz\geq 2|n|$. Suppose further that $xz+n, yz+n, xw+n, yw+n$ are $k$-th powers. Then $yw \geq k^{k} (xz)^{k-1}/(4^{k-1}|n|^k)$.     
\end{lem}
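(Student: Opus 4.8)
The plan is to exploit the multiplicative identity linking the four shifted $k$-th powers, together with the elementary inequality $a^{k}-b^{k}\ge kb^{k-1}(a-b)$ valid for $a\ge b>0$, which is the standard engine behind gap principles of this type.

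First I would write $xz+n=r^{k}$, $yz+n=s^{k}$, $xw+n=t^{k}$, $yw+n=u^{k}$ with $r,s,t,u$ positive integers; all four quantities are positive, since $xz\ge 2|n|$ forces $xz+n\ge xz/2>0$, and $yz,xw,yw$ all exceed $xz$. A direct expansion then gives the identity
$$(yz+n)(xw+n)-(xz+n)(yw+n)=-n(y-x)(w-z),$$
so that $\lvert s^{k}t^{k}-r^{k}u^{k}\rvert=|n|(y-x)(w-z)$. Because $n\neq 0$ and $y>x$, $w>z$, the right-hand side is a positive integer, hence the positive integers $st$ and $ru$ are distinct and differ by at least $1$.

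Next I would apply the inequality $a^{k}-b^{k}\ge kb^{k-1}$ (for integers $a>b\ge 1$) with $\{a,b\}=\{st,ru\}$ to get $|n|(y-x)(w-z)\ge k\min(st,ru)^{k-1}$. To bound $\min(st,ru)$ from below, I would use $xz\ge 2|n|$ once more: each of $xz+n,\,yz+n,\,xw+n,\,yw+n$ is at least half of $xz,\,yz,\,xw,\,yw$ respectively, so both $(ru)^{k}=(xz+n)(yw+n)$ and $(st)^{k}=(yz+n)(xw+n)$ are at least $xyzw/4$. Therefore $\min(st,ru)^{k-1}\ge (xyzw)^{(k-1)/k}/4^{(k-1)/k}$. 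Combining this with the trivial estimate $(y-x)(w-z)<yw$ yields
$$|n|\,yw>\frac{k}{4^{(k-1)/k}}\,(xz)^{(k-1)/k}(yw)^{(k-1)/k},$$
and dividing by $(yw)^{(k-1)/k}$ and raising to the $k$-th power gives $yw>k^{k}(xz)^{k-1}/(4^{k-1}|n|^{k})$, which is the claimed bound (indeed with strict inequality).

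I do not expect a genuine obstacle here: the only things to be careful about are the bookkeeping with the fractional exponents $(k-1)/k$ in the final rearrangement, and verifying that every shifted power together with the auxiliary quantities $st$, $ru$, and $|st-ru|$ is a positive integer so that integrality can legitimately be invoked. The sign of $n$ never has to be treated in cases, since every inequality above is phrased through absolute values.
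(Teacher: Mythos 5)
Your proof is correct: the identity $(yz+n)(xw+n)-(xz+n)(yw+n)=-n(y-x)(w-z)$, the integrality of $st-ru$, the inequality $a^{k}-b^{k}\ge kb^{k-1}$, and the use of $xz\ge 2|n|$ to compare each shifted product with half its unshifted value all check out, and the final rearrangement does yield $yw> k^{k}(xz)^{k-1}/(4^{k-1}|n|^{k})$. The paper itself does not reprove this lemma (it quotes it from the cited reference), but your argument is exactly the standard gap-principle proof underlying that citation, so there is nothing to flag.
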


Next, we deduce a few useful corollaries.

\begin{cor}\label{cor:gap}
Let $k \geq 4$ and $n$ be a nonzero integer. Let $L$ be a real number such that $L>\frac{k}{k-3}$. Let $A=\{a_1,a_2\}$ and $B=\{b_1, b_2, \ldots, b_m\}$ be subsets of positive integers such that $a_1<a_2\leq b_1<b_2<\cdots<b_m$, and $ab+n$ is a $k$-th power for each $a \in A$ and $b \in B$. If $b_1\geq 2|n|^{L}$, then $b_i\geq b_1^{\theta^{i-1}}$ for each $1 \leq i \leq m$, where $\theta=k-2-\frac{k}{L}>1$.     
\end{cor}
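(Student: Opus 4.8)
The plan is to argue by induction on $i$, using Lemma~\ref{gap_principle} to pass from $b_i$ to $b_{i+1}$. First one records the bookkeeping: since $L>\frac{k}{k-3}$ we have $k-3>\frac{k}{L}$, hence $\theta=k-2-\frac{k}{L}>1$, so the claimed bounds $b_i\geq b_1^{\theta^{i-1}}$ genuinely grow with $i$. The base case $i=1$ is the trivial identity $b_1\geq b_1^{\theta^0}$.

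For the inductive step, assume $1\leq i<m$ and $b_i\geq b_1^{\theta^{i-1}}$. I would apply Lemma~\ref{gap_principle} with $x=a_1$, $y=a_2$, $z=b_i$, $w=b_{i+1}$. The hypotheses $x<y$ and $z<w$ are immediate from the orderings of $A$ and $B$; the requirement $xz\geq 2|n|$ holds because $a_1b_i\geq b_i\geq b_1\geq 2|n|^{L}\geq 2|n|$, using $a_1\geq 1$, $L>1$, and $|n|\geq 1$; and $a_1b_i+n$, $a_2b_i+n$, $a_1b_{i+1}+n$, $a_2b_{i+1}+n$ are $k$-th powers by assumption. The lemma then gives $a_2 b_{i+1}\geq \frac{k^{k}(a_1b_i)^{k-1}}{4^{k-1}|n|^{k}}$. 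Dropping $a_1$ via $a_1\geq 1$ and using $a_2\leq b_1\leq b_i$ to cancel one factor of $b_i$ against $a_2$, I obtain
$$b_{i+1}\geq \frac{k^{k}}{4^{k-1}}\cdot\frac{b_i^{k-2}}{|n|^{k}}\geq \frac{b_i^{k-2}}{|n|^{k}},$$
where the last inequality uses $k^{k}\geq 4^{k}\geq 4^{k-1}$ for $k\geq 4$.

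It remains to iterate. From $b_i\geq b_1^{\theta^{i-1}}$ we get $b_{i+1}\geq b_1^{(k-2)\theta^{i-1}}/|n|^{k}$, so it suffices to check $b_1^{(k-2)\theta^{i-1}}/|n|^{k}\geq b_1^{\theta^{i}}$, i.e. $b_1^{(k-2-\theta)\theta^{i-1}}\geq |n|^{k}$. The key identity is $k-2-\theta=\frac{k}{L}$, which turns this into $b_1^{(k/L)\theta^{i-1}}\geq |n|^{k}$; since $\theta>1$ forces $\theta^{i-1}\geq 1$, it is enough that $b_1^{k/L}\geq |n|^{k}$, that is $b_1\geq |n|^{L}$, which holds since $b_1\geq 2|n|^{L}$. (When $|n|=1$ the inequality is vacuous.) This completes the induction.

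The argument is essentially mechanical once Lemma~\ref{gap_principle} is available; the only real content is the definition of $\theta$, chosen precisely so that the self-improving recursion $b_{i+1}\geq b_i^{k-2}/|n|^{k}$ iterates into $b_i\geq b_1^{\theta^{i-1}}$ under the single normalization $b_1\geq 2|n|^{L}$. The point that needs a little attention is ensuring the hypothesis $xz\geq 2|n|$ of Lemma~\ref{gap_principle} is met at \emph{every} step of the induction, not just the first; this is exactly why the threshold is stated with the factor $2$.
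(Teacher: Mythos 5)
Your proof is correct and follows essentially the same route as the paper: both apply Lemma~\ref{gap_principle} with $x=a_1$, $y=a_2$, $z=b_i$, $w=b_{i+1}$, drop $a_1\geq 1$, cancel $a_2\leq b_1\leq b_i$ to get $b_{i+1}\geq b_i^{k-2}/|n|^k$, and then use $b_1\geq 2|n|^L$ to absorb the factor $|n|^k$. The paper phrases the last step as the single inequality $b_{i+1}\geq b_i^{\theta}$ (using $|n|^k<b_i^{k/L}$) rather than your explicit induction on the exponent, but this is only a cosmetic difference.
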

\begin{proof}
Let $1\leq i<m$. It suffices to show that $b_{i+1}\geq b_i^{\theta}$. Indeed, since $b_1 \geq 2|n|$ and $b_1\geq a_2$, applying Lemma~\ref{gap_principle} with $x=a_1,y=a_2, z=b_i, w=b_{i+1}$, we have
$$
b_{i+1}\geq \frac{k^{k} (a_1b_i)^{k-1}}{a_2 \cdot 4^{k-1}|n|^k} \geq
\frac{b_i^{k-2}}{|n|^k}> \frac{b_i^{k-2}}{b_i^{k/L}}=b_i^{k-2-\frac{k}{L}}=b_i^{\theta},
$$
as required.
\end{proof}

\begin{cor}\label{cor:a0X}
Let $k,n$ be integers with $k\geq 3$ and $n\neq 0$. Assume that $H^\times (a_0;a_1,a_2,\ldots, a_d) \subseteq \{x^k-n: x \in \N\}$, where $a_1<a_2<\ldots<a_d$ and $d\geq 10$ is even. Then $a_0X<|n|^{k/(k-2)}$, where $X=\prod_{j=d/2+1}^{d} a_j$.
\end{cor}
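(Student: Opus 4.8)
The plan is to deduce the bound from a single application of the gap principle, Lemma~\ref{gap_principle}, to a quadruple of integers manufactured from the Hilbert cube: the two ``smaller'' numbers will be $a_0$ times a product of \emph{large} first-half generators, and the two ``larger'' numbers will be $X$ itself and $X$ times one extra small generator. The conclusion of Lemma~\ref{gap_principle} then compares $a_0X$ with $|n|^{k/(k-2)}$ up to a multiplicative factor that is a ratio of products of generators, and this factor will be visibly $<1$.

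Concretely, let $s<s'$ be the two smallest indices in $\{1,\dots,d/2\}$ with $a_s,a_{s'}\ge 2$ (so $\{s,s'\}=\{1,2\}$ if $a_1\ge 2$, and $\{s,s'\}=\{2,3\}$ if $a_1=1$), and put $T=\{1,\dots,d/2\}\setminus\{s,s'\}$. I would apply Lemma~\ref{gap_principle} with $x=a_0\prod_{\ell\in T}a_\ell$, $y=xa_s$, $z=X=\prod_{j=d/2+1}^{d}a_j$, and $w=Xa_{s'}$. Then $x<y$ and $z<w$ because $a_s,a_{s'}\ge 2$, and each of $xz+n,\,yz+n,\,xw+n,\,yw+n$ is of the form $a_0\prod_{i\in I}a_i+n$ for some $I\subseteq\{1,\dots,d\}$ (e.g.\ $yw+n=a_0\prod_{i=1}^{d}a_i+n$), hence a perfect $k$-th power by hypothesis. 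Granting $xz\ge 2|n|$, Lemma~\ref{gap_principle} gives $yw\ge k^k(xz)^{k-1}/(4^{k-1}|n|^k)$; since $yw=a_sa_{s'}\cdot xz$ and $k^k>4^{k-1}$, this rearranges to
\[
\Bigl(a_0X\prod_{\ell\in T}a_\ell\Bigr)^{k-2}=(xz)^{k-2}<a_sa_{s'}\,|n|^k,\qquad\text{i.e.}\qquad (a_0X)^{k-2}<\frac{a_sa_{s'}}{\bigl(\prod_{\ell\in T}a_\ell\bigr)^{k-2}}\,|n|^k.
\]
Since $d\ge 10$, the set $T$ contains two indices $\ell_1<\ell_2$ with $\ell_1,\ell_2>s'$ (namely $3,4\in T$ when $a_1\ge 2$, and $4,5\in T$ when $a_1=1$), so $\prod_{\ell\in T}a_\ell\ge a_{\ell_1}a_{\ell_2}>a_{s'}^2>a_sa_{s'}$; as $k\ge 3$ this forces $\bigl(\prod_{\ell\in T}a_\ell\bigr)^{k-2}>a_sa_{s'}$, whence $(a_0X)^{k-2}<|n|^k$, that is, $a_0X<|n|^{k/(k-2)}$. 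It is exactly the case $a_1=1$, $k=3$ that needs $d/2\ge 5$, i.e.\ $d\ge 10$; for $k\ge 4$ or $a_1\ge 2$ a smaller cube already suffices.

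The step I expect to demand genuine care is the running hypothesis $xz\ge 2|n|$ of Lemma~\ref{gap_principle}. One has $xz=a_0\prod_{i\ne s,s'}a_i\ge a_0X\ge X\ge(\tfrac d2+1)(\tfrac d2+2)\cdots d$ (using $a_i\ge i$), which is $\ge 30240$ already for $d=10$, so $xz\ge 2|n|$ holds unless $|n|$ is very large compared with the generators. In the residual case $a_0X<2|n|$ the conclusion is immediate whenever $2|n|\le|n|^{k/(k-2)}$, which covers all $(k,n)$ except large $k$ with $|n|$ in a bounded range; for those, $a_0X<2|n|$ forces all generators and $a_0$ to be small, so the $k$-th powers $a_0a_ia_j+n$ lie below $2^{O(k)}$ and take only $O(1)$ distinct values, while $\{i,j\}\mapsto a_ia_j$ is at most $\lfloor d/2\rfloor$-to-one — incompatible with the $\binom d2\ge 45$ pairs, so no such configuration exists and the statement is vacuous.
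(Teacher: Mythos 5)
Your argument is correct, and it rests on the same key tool used by the paper: a single application of Lemma~\ref{gap_principle} to four subset products of the cube. The differences are in the bookkeeping. The paper assumes $a_0X\ge |n|^{k/(k-2)}$ for contradiction and applies the lemma with $x=a_0a_1a_5X$, $y=a_0a_2a_5X$, $z=a_3$, $w=a_4$; under that hypothesis the condition $xz\ge 2|n|$ is automatic (since $xz\ge a_1a_3a_5\cdot a_0X\ge 15|n|$), and the lemma collapses to $(a_0X)^{k-2}\le|n|^k$ in two lines, so no residual case ever appears. Your direct formulation, with the roles reversed ($X$ on the $z,w$ side and the large product $a_0\prod_{\ell\in T}a_\ell$ on the $x,y$ side), is complete and correct in the main case: the choice of $s,s'$, the four subset products, and the comparison $\bigl(\prod_{\ell\in T}a_\ell\bigr)^{k-2}>a_sa_{s'}$ all check out, and $d\ge 10$ is exactly what the case $a_1=1$, $k=3$ requires. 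The only place you pay for arguing directly is the threshold hypothesis $xz\ge 2|n|$: your residual case $a_0X<2|n|$ with $2|n|>|n|^{k/(k-2)}$ (i.e.\ $|n|<2^{(k-2)/2}$, a range that grows with $k$ rather than being bounded, though this does not matter for your argument) is disposed of by a counting argument that is sound --- all values $a_0a_ia_j+n$ are then positive $k$-th powers below roughly $2^{3k/2}$, hence at most three distinct values, against $\binom{d}{2}\ge 45$ pairs mapped at most $\lfloor d/2\rfloor$-to-one --- but it is left as a sketch and is the one step a referee would ask you to write out. Had you, like the paper, simply assumed $a_0X\ge|n|^{k/(k-2)}$ for contradiction, then $xz\ge a_{\ell_1}a_{\ell_2}\cdot a_0X\ge 2|n|$ would be immediate and your entire last paragraph could be deleted.
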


\begin{proof}
Assume that $a_0X\geq |n|^{k/(k-2)}$. Since $d\geq 10$, observe that $$(\{a_0a_1a_5X, a_0a_2a_5X\}, \{a_3,a_4\})$$ forms a bipartite Diophantine tuple with property $BD_k(n)$.  Thus, by applying Lemma~\ref{gap_principle} with $x=a_0a_1a_5X$, $y=a_0a_2a_5X$, $z=a_3$ and $w=a_4$, we obtain
$$
a_0a_2a_4a_5X \geq \frac{k^k (a_0a_1a_3a_5X)^{k-1}}{4^{k-1}|n|^k}.
$$
It follows that
$$
|n|^k\geq \frac{4^{k-1}|n|^k}{k^k}\geq \frac{(a_0a_1a_3a_5X)^{k-1}}{a_0a_2a_4a_5X}\geq \frac{a_0^{k-2}a_5^{k-2}X^{k-2}}{a_4}\geq (a_0X)^{k-2},
$$
that is, $a_0X<|n|^{k/(k-2)}$. 
\end{proof}

The following proposition is a slightly weaker version of \cite[Proposition 4.1]{Y24}. Its proof is based on an explicit bound on the number of solutions of Thue inequalities by Evertse \cite{E83} combined with repeated applications of Lemma~\ref{gap_principle}. 

\begin{prop}[{\cite[Proposition 4.1]{Y24}}]\label{prop:a1a2}
Let $k \geq 3$ and $n$ be a nonzero integer. Let $A,B \subseteq \N$ such that $A=\{a_1, a_2, \ldots, a_{\ell}\}$ and $B=\{b_1, b_2, \ldots, b_m\}$ with $a_1<a_2<\cdots<a_{\ell}$, and $b_1<b_2<\cdots<b_m$, and $AB+n \subseteq \{x^k: x \in \N\}$. 
Assume that $m\geq 7$, $\ell \geq 2$, and $a_2 \leq b_{m-6}$. If $k=3$, further assume that $\ell \geq 3$, and $a_3 \leq b_{m-6}$. Then at most $6$ elements in $B$ are at least $2|n|^{17}$. 
\end{prop}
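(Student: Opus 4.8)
The plan is to reduce Proposition~\ref{prop:a1a2} to a counting statement about solutions of Thue-type inequalities, exactly as in the proof of \cite[Proposition 4.1]{Y24}, but tracking the constants carefully enough to obtain the uniform bound ``$6$ elements above $2|n|^{17}$''. First I would dispose of the indexing: since $a_2 \le b_{m-6}$ (and $a_3 \le b_{m-6}$ when $k=3$), the elements $a_1, a_2$ (resp. $a_1, a_2, a_3$) are dominated by all but the top six elements of $B$, so I may freely apply the gap principle of Lemma~\ref{gap_principle} with $x, y$ taken from the small part of $A$ and $z, w$ consecutive large elements of $B$. The strategy is then a dichotomy on a putative seventh large element $b_j \ge 2|n|^{17}$ in $B$ (with six further elements $b_{j+1} < \cdots < b_{j+6}$ still satisfying the hypotheses, via the ordering constraint).

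The key steps, in order, are: (1) Suppose for contradiction that at least $7$ elements of $B$ are $\ge 2|n|^{17}$; relabel them as $c_1 < c_2 < \cdots < c_7$. Since each $c_i$ with $i \le 6$ satisfies $a_j c_i \ge 2|n|$ for the relevant $a_j$'s, Lemma~\ref{gap_principle} applies repeatedly and yields a rapid (roughly $(k-2)$-fold exponential) growth $c_{i+1} \gg c_i^{k-2}/|n|^k$, analogous to Corollary~\ref{cor:gap}; with $c_1 \ge 2|n|^{17}$ and $k \ge 3$ this forces $c_7$ to be astronomically large relative to $|n|$, say $c_7 \ge |n|^{C}$ for an explicit large $C$. (2) On the other hand, for each pair $\{a_i, a_j\}$ in the small part of $A$, the system $a_i c + n = \square^k$, $a_j c + n = \square^k$ forces the point $c$ to give a solution of a Thue inequality $|F(u,v)| \le |n|^{c_0}$ for a binary form $F$ of degree $k$ (resp. degree $3k$ when we use all three $a$'s for $k=3$) with discriminant controlled by $n$; Evertse's explicit bound \cite{E83} caps the number of such solutions by an absolute constant (depending only on $k$), which is $< 7$ in the regime where the solutions are ``large'' (i.e., $c \ge 2|n|^{17}$ separates the large solutions from the bounded ones). (3) Combining (1) and (2): the seven large elements $c_1, \ldots, c_7$ would all be large solutions of the same Thue inequality, contradicting Evertse's bound once the exponent $17$ is chosen large enough to guarantee each $c_i$ lies in the ``large solution'' range. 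I would then read off that $17$ suffices and that the final count is $6$.

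The main obstacle I anticipate is \textbf{making the Thue inequality reduction uniform in $n$ with the clean threshold $2|n|^{17}$}. Concretely: from $a_1 c + n = s^k$ and $a_2 c + n = t^k$ one eliminates $c$ to get $a_2 s^k - a_1 t^k = (a_2 - a_1)n$, which is a Thue equation in $(s,t)$ of degree $k$; but Evertse's theorem bounds the number of solutions of $|F(u,v)| \le h$ with an absolute constant only once $h$ is small relative to the relevant heights, and here one must ensure the ``small'' solutions (those with $c$ below the threshold) are genuinely separated from the at-most-a-bounded-number of ``large'' solutions. The delicate point is that $a_1, a_2$ themselves are not bounded in terms of $n$, so the coefficients of $F$ are uncontrolled; the resolution (following \cite{Y24, E83}) is that Evertse's bound on the number of \emph{large} solutions of a Thue \emph{inequality} $|F(x,y)| \le h$ is genuinely independent of the coefficients of $F$, depending only on $\deg F$ and on $h$ relative to a normalizing invariant. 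I would therefore need to invoke Evertse's result in its scale-invariant form and verify that $c \ge 2|n|^{17}$ places $(s,t)$ among the large solutions; the exponent $17$ is exactly what makes this margin work, and once that is set up the gap-principle growth from Step (1) is routine. A secondary (minor) obstacle is the $k=3$ case, where degree-$3$ Thue equations are too weak on their own, so one genuinely needs the third element $a_3$ and works with the degree-$9$ form coming from $a_3 s^3 - a_1 u^3$, $a_3 t^3 - a_2 v^3$ simultaneously — this is why the hypothesis ``$\ell \ge 3$ and $a_3 \le b_{m-6}$'' is imposed for $k=3$.
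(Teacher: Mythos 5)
First, a point of comparison: the paper does not actually prove Proposition~\ref{prop:a1a2} — it imports it (in slightly weakened form) from \cite[Proposition 4.1]{Y24}, remarking only that the proof combines Evertse's explicit bound on the number of solutions of Thue inequalities with repeated applications of Lemma~\ref{gap_principle}. Your outline follows exactly that indicated route, so the strategy is the right one; the problem is that the decisive steps are named rather than carried out. The entire content of the statement is quantitative — why the threshold is $2|n|^{17}$ and why the count is $6$ — and your plan defers precisely this: you never state which theorem of Evertse you invoke, what its ``largeness'' condition is relative to the (unbounded) coefficients $a_1,a_2$ of the form $a_2X^k-a_1Y^k$, or how the condition $c\ge 2|n|^{17}$, possibly after discarding a bounded number of elements via the gap principle, places the corresponding $(s,t)$ in the range where Evertse's count applies. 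You correctly identify this as the main obstacle, but ``invoke Evertse's result in its scale-invariant form and verify the margin'' is exactly the verification that constitutes the proof, so as written there is a genuine gap rather than a complete argument.

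Second, your step (1) is quantitatively wrong in the case $k=3$. Lemma~\ref{gap_principle} with $x=a_1$, $y=a_2$, $z=c_i$, $w=c_{i+1}$ gives $a_2c_{i+1}\ge k^k(a_1c_i)^{k-1}/(4^{k-1}|n|^k)$, and after dividing by $a_2\le b_{m-6}\le c_i$ one gets $c_{i+1}\gg c_i^{k-2}/|n|^k$; for $k\ge 4$ this indeed forces rapid growth above the threshold, but for $k=3$ the exponent is $k-2=1$ and the inequality yields no growth at all, so the ``astronomically large $c_7$'' conclusion collapses exactly in the cubic case. This is the case where the extra hypotheses $\ell\ge 3$ and $a_3\le b_{m-6}$ are essential, and your proposed fix (a degree-$9$ form built from $a_1,a_2,a_3$) is speculative: you neither specify the form nor explain how Evertse's bound for it, together with a usable gap principle involving $a_2,a_3$, yields the same threshold $2|n|^{17}$ and the same count $6$. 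So the proposal is a reasonable reconstruction of the method the paper attributes to \cite{Y24}, but it does not yet establish the proposition, and the cubic case in particular is not addressed by the sketch.
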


\subsection{Applications of sieve methods}
In this subsection, we use sieve methods to bound the size of certain bipartite Diophantine tuples. We first recall Gallagher's larger sieve \cite{G71}.
\begin{lem}[Gallagher's larger sieve]\label{GS}  
Let $N$ be a positive integer and $A\subseteq\{1,2,\ldots, N\}$. Let ${\mathcal P}$ be a set of primes. For each prime $p \in {\mathcal P}$, let $A_p=A \pmod{p}$. For any $1<Q\leq N$, we have
$$
 |A|\leq \frac{\underset{p\leq Q, \ p\in \mathcal{P}}\sum\log p - \log N}{\underset{p\leq Q, \ p \in \mathcal{P}}\sum\frac{\log p}{|A_p|}-\log N},
$$
provided that the denominator is positive.
\end{lem}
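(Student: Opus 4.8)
The plan is to run the standard double-counting argument behind Gallagher's larger sieve; it is entirely elementary, so I expect no serious obstacle, only some care with pair-counting bookkeeping. Assume $A\neq\emptyset$, the empty case being trivial. For each prime $p\in\mathcal P$ with $p\le Q$ and each residue $r\bmod p$, write $A(r,p)=\#\{a\in A:a\equiv r\pmod p\}$, so that $\sum_r A(r,p)=|A|$ while exactly $|A_p|$ of the classes $r$ satisfy $A(r,p)>0$. I would estimate the quantity
\[
S=\sum_{\substack{p\le Q\\ p\in\mathcal P}}\log p\cdot\#\{(a,a')\in A\times A:\ a\ne a',\ p\mid a-a'\}
\]
from below and from above, and then compare the two bounds.

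For the lower bound, the inner count equals $\sum_{r\bmod p}A(r,p)\bigl(A(r,p)-1\bigr)=\sum_r A(r,p)^2-|A|$, and Cauchy--Schwarz gives $\sum_r A(r,p)^2\ge |A|^2/|A_p|$, the sum effectively running over the $|A_p|$ nonzero classes. Hence
\[
S\ \ge\ |A|^2\sum_{\substack{p\le Q\\ p\in\mathcal P}}\frac{\log p}{|A_p|}\ -\ |A|\sum_{\substack{p\le Q\\ p\in\mathcal P}}\log p.
\]
For the upper bound, I would interchange the order of summation, writing $S=\sum_{a\ne a'}\ \sum_{p\mid a-a',\ p\le Q,\ p\in\mathcal P}\log p$. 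Since $a,a'\in\{1,\dots,N\}$ are distinct we have $1\le|a-a'|\le N-1$, so $\prod_{p\mid a-a'}p=\rad(a-a')\le|a-a'|<N$, whence the inner sum is at most $\log\rad(a-a')<\log N$. As there are $|A|(|A|-1)$ ordered pairs $(a,a')$ with $a\ne a'$, this yields $S\le |A|(|A|-1)\log N$.

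Comparing the two estimates and dividing through by $|A|$ gives
\[
|A|\sum_{\substack{p\le Q\\ p\in\mathcal P}}\frac{\log p}{|A_p|}-\sum_{\substack{p\le Q\\ p\in\mathcal P}}\log p\ \le\ (|A|-1)\log N,
\]
which rearranges to $|A|\bigl(\sum_p\frac{\log p}{|A_p|}-\log N\bigr)\le\sum_p\log p-\log N$; dividing by the (positive) denominator yields exactly the asserted bound. The one point genuinely worth attention is keeping the ordered/unordered bookkeeping clean so that the right-hand side carries the factor $|A|-1$ rather than $|A|$---this is what produces the sharper numerator $\sum\log p-\log N$ instead of merely $\sum\log p$---together with the elementary observation that replacing $|a-a'|$ by its radical only strengthens the upper bound for $S$. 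The hypothesis $Q\le N$ plays no essential role beyond making the statement self-contained.
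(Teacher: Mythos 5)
Your proof is correct: the double count of $S=\sum_{p\le Q,\,p\in\mathcal P}\log p\cdot\#\{(a,a')\in A\times A: a\ne a',\ p\mid a-a'\}$, with Cauchy--Schwarz over the $|A_p|$ occupied residue classes for the lower bound and the radical estimate $\sum_{p\mid a-a'}\log p\le\log|a-a'|<\log N$ for the upper bound, rearranges exactly to the stated inequality, and your bookkeeping (ordered pairs giving the factor $|A|-1$, which produces the $-\log N$ in the numerator) is handled correctly. Note that the paper does not prove this lemma at all --- it is quoted directly from Gallagher's original article --- and your argument is precisely the standard proof given there, so there is nothing to reconcile.
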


To apply the above larger sieve, we need to study a finite field model of Diophantine tuples. As usual, for a prime number $p$, let $\F_p$ be the finite field with $p$ elements and $\F_p^*=\F_p \setminus \{0\}$. The following lemma can be deduced via a standard application of Weil's bound on complete character sums (see, for example \cite[Exercise 5.66]{LN97}).

\begin{lem}[{\cite[Lemma 5.3]{CY25}}]\label{lem:Weil}
Let $k \geq 2$ and $p$ be a prime such that $p \equiv 1 \pmod k$. Let $A_p \subseteq \F_p^*$, $B_p \subseteq \F_p$, and $\lambda \in \F_p^*$, such that $ab+\lambda \in \{x^k: x \in \F_p\}$ for all $a\in A_p$ and $b\in B_p$. If $|A_p|=m$, then $|B_p|\leq \frac{p}{k^m}+m\sqrt{p}$.
\end{lem}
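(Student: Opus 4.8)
The plan is to bound the number of elements of $B_p$ by counting, for each $a \in A_p$, how the condition $ab+\lambda \in \{x^k : x \in \F_p\}$ cuts down the admissible residues $b$, and then exploit the fact that these $m$ conditions are nearly independent via a character-sum estimate. First I would introduce the multiplicative character indicator: since $p \equiv 1 \pmod k$, the number of solutions to $y^k = c$ in $\F_p$ equals $\sum_{\chi^k = \chi_0} \chi(c)$ for $c \neq 0$ (and equals $1$ for $c = 0$, which only happens for the at most $m$ values $b = -\lambda a^{-1}$, contributing negligibly). Thus for each $a \in A_p$, the indicator that $ab+\lambda$ is a $k$-th power is $\frac{1}{k}\sum_{\chi^k=\chi_0}\chi(ab+\lambda)$ away from the vanishing locus. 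Taking the product over all $a \in A_p$, the number of $b \in B_p$ is at most
$$
|B_p| \le m + \sum_{b \in \F_p} \prod_{a \in A_p} \frac{1}{k}\sum_{\chi_a^k = \chi_0} \chi_a(ab+\lambda).
$$

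Next I would expand the product over the $m$ characters $\chi_a$ and swap the order of summation, isolating the main term (all $\chi_a$ trivial), which contributes $\frac{p}{k^m}$. For every other choice of the tuple $(\chi_a)_{a \in A_p}$, the inner sum is $\sum_{b \in \F_p} \chi\big(\prod_{a}(ab+\lambda)^{e_a}\big)$ for suitable exponents, where the argument is a polynomial in $b$ of degree at most $m$ that is not a perfect $k$-th power of a rational function — here one uses that the linear factors $ab+\lambda$ are pairwise non-proportional because the $a$'s are distinct and $\lambda \neq 0$, so at least one factor appears with exponent not divisible by $k$. Weil's bound (in the form of \cite[Exercise 5.66]{LN97}) then gives that each such character sum is at most $(m-1)\sqrt{p}$ in absolute value. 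There are $k^m - 1$ such nontrivial tuples, each weighted by $k^{-m}$, so altogether these contribute at most $\frac{k^m-1}{k^m}(m-1)\sqrt{p} \le m\sqrt{p}$, yielding $|B_p| \le \frac{p}{k^m} + m\sqrt{p}$ after absorbing the $+m$ from the vanishing locus into the $m\sqrt p$ term.

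The main obstacle — though it is really a bookkeeping point rather than a deep one — is verifying that for every nontrivial choice of characters the polynomial $\prod_{a \in A_p}(ab+\lambda)^{e_a}$ (with $0 \le e_a < k$ not all zero) is genuinely not of the form $c \cdot h(b)^k$, so that Weil's bound applies with a nonzero main term savings; this is exactly where distinctness of the elements of $A_p$ and $\lambda \neq 0$ enter, guaranteeing the linear factors are pairwise coprime and hence that a factor with exponent $e_a \in \{1,\dots,k-1\}$ obstructs being a $k$-th power. One should also be mildly careful that elements of $A_p$ are nonzero (given) so that $ab+\lambda$ is genuinely linear in $b$, and that the edge contributions from $ab+\lambda = 0$ are bounded by $|A_p| = m$ and absorbed. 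Since the statement is quoted from \cite{CY25} and explicitly flagged as ``a standard application of Weil's bound,'' I would keep this proof brief, citing \cite[Exercise 5.66]{LN97} for the character-sum input and only spelling out the non-$k$-th-power verification.
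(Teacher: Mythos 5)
Your proposal is correct and is exactly the standard argument the paper has in mind: the lemma is quoted verbatim from \cite[Lemma 4.3]{CY25}, and the paper only remarks that it is a standard application of Weil's bound as in \cite[Exercise 5.66]{LN97}, which is precisely your route (detect $k$-th powers with characters of order dividing $k$, extract the main term $p/k^m$ from the all-principal tuple, and bound each of the $k^m-1$ nontrivial tuples by $(m-1)\sqrt{p}$ using that $\prod_a (ab+\lambda)^{e_a}$ is not a $k$-th power because the roots $-\lambda a^{-1}$ are distinct). The only micro-point to note is that absorbing the at most $m$ exceptional values of $b$ with $ab+\lambda=0$ into the $m\sqrt{p}$ term uses $m\leq \sqrt{p}$; in the complementary case $m>\sqrt{p}$ the stated bound holds trivially since $|B_p|\leq p< m\sqrt{p}$.
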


A special case of the following proposition has appeared implicitly in the proof of \cite[Theorem 2.5]{CY25}. Here we include a short proof for the sake of completeness.

\begin{prop}\label{prop:sieve2}
Let $k,m$ be fixed positive integers with $k\geq 2$, and $C, L$ be fixed positive real numbers with $L>1$. 
If $A$ is a subset of $\{1,2,\ldots, M\}$ with $|A|=m$, $B$ is a subset of $\{1,2,\ldots, N\}$ such that $(A, B)$ forms a bipartite Diophantine tuple with property $BD_k(n)$, where $0<|n|\leq N$, and $M\leq CN^{L}$, then for each $\ep>0$, we have $$|B|\ll_{\ep,k,m,C,L} N^{\frac{\phi(k)}{k^m}+\ep}.$$ 
\end{prop}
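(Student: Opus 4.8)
The plan is to run Gallagher's larger sieve (Lemma~\ref{GS}) on $B$, bounding $|B\bmod p|$ at primes $p\equiv 1\pmod{k}$ by the finite-field estimate of Lemma~\ref{lem:Weil}. Throughout we may assume $N$ is large in terms of $k,m,\ep,C,L$, since otherwise $|B|\le N$ is bounded and there is nothing to prove; we may also assume $\ep<1-\phi(k)/k^{m}$, as $|B|\le N$ is trivially $\ll N^{\phi(k)/k^{m}+\ep}$ otherwise. Set
\[
M_{0}:=n\prod_{a\in A}a\prod_{\substack{a,a'\in A\\ a\neq a'}}(a-a'),
\]
a nonzero integer; since $A\subseteq\{1,\dots,M\}$ with $M\le CN^{L}$ and $0<|n|\le N$, we have $\log|M_{0}|\ll_{m,C,L}\log N$.

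Fix a small parameter $\eta>0$, to be chosen at the end in terms of $\ep,k,m$, let $p_{0}=p_{0}(\eta,k,m)$ be large enough that $p/k^{m}+m\sqrt{p}\le(1+\eta)p/k^{m}$ for all $p>p_{0}$, and put $\mathcal{P}=\{p>p_{0}:p\equiv 1\pmod{k},\ p\nmid M_{0}\}$. For $p\in\mathcal{P}$ the reduction map modulo $p$ is injective on $A$ and avoids $0$ (as $p$ divides no element of $A$ and no difference of two elements), and $n\not\equiv 0\pmod{p}$; reducing the relations ``$ab+n$ is a $k$-th power'' modulo $p$ then shows that $A_{p}:=A\bmod p$, $B_{p}:=B\bmod p$, and $\lambda:=n\bmod p$ satisfy the hypotheses of Lemma~\ref{lem:Weil} with $|A_{p}|=m$, whence $|B_{p}|\le p/k^{m}+m\sqrt{p}\le(1+\eta)p/k^{m}$.

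Now apply Lemma~\ref{GS} to $B\subseteq\{1,\dots,N\}$ with sieving set $\mathcal{P}$ and cutoff $Q:=N^{(1+\eta)^{2}\phi(k)/k^{m}}$, noting $Q<N$ once $\eta$ is small. The numerator is at most $\sum_{p\le Q}\log p\ll Q$ by Chebyshev's estimate. For the denominator, the bound $|B_{p}|\le(1+\eta)p/k^{m}$ together with Mertens' theorem in arithmetic progressions yields
\[
\sum_{\substack{p\in\mathcal{P}\\ p\le Q}}\frac{\log p}{|B_{p}|}\ \ge\ \frac{k^{m}}{1+\eta}\sum_{\substack{p\in\mathcal{P}\\ p\le Q}}\frac{\log p}{p}\ \ge\ \frac{k^{m}}{1+\eta}\Bigl(\frac{\log Q}{\phi(k)}-O(1)-\sum_{p\mid M_{0}}\frac{\log p}{p}\Bigr),
\]
where $\sum_{p\mid M_{0}}\frac{\log p}{p}\ll\log\log|M_{0}|\ll\log\log N$ by the elementary bound obtained on splitting the sum at $p=\log|M_{0}|$. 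Since $\frac{k^{m}}{1+\eta}\cdot\frac{\log Q}{\phi(k)}=(1+\eta)\log N$, the denominator of the larger-sieve bound is at least $(1+\eta)\log N-\log N-O_{k,m,\ep}(\log\log N)\ge\frac{\eta}{2}\log N$ for $N$ large, so $|B|\ll_{\eta,k,m}Q/\log N\ll N^{(1+\eta)^{2}\phi(k)/k^{m}}$. Finally, choosing $\eta=\eta(\ep,k,m)>0$ small enough that $(1+\eta)^{2}\phi(k)/k^{m}<1$ and $(1+\eta)^{2}\phi(k)/k^{m}\le\phi(k)/k^{m}+\ep$ gives $|B|\ll_{\ep,k,m,C,L}N^{\phi(k)/k^{m}+\ep}$.

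The argument itself is standard larger-sieve technology; the point requiring care is the error bookkeeping. The primes at which the finite-field model of $(A,B)$ degenerates are exactly the prime divisors of $M_{0}$, and the hypothesis $M\le CN^{L}$ is precisely what controls them: it forces $\log|M_{0}|\ll_{m,C,L}\log N$, so their total contribution to the relevant Mertens sum is $O(\log\log N)$, negligible against the main term $\asymp_{k}\log N$. The other quantitative point to track is that $m\sqrt{p}=o(p/k^{m})$; this is what makes the exponent $\phi(k)/k^{m}$ rather than $2\phi(k)/k^{m}$, and it is absorbed into the slack parameter $\eta$.
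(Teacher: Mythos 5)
Your proof is correct and follows essentially the same route as the paper's: Gallagher's larger sieve over primes $p\equiv 1\pmod{k}$ up to $Q\approx N^{\phi(k)/k^{m}+o(1)}$, the Weil-bound estimate $|B_{p}|\le p/k^{m}+m\sqrt{p}$ at the good primes, and control of the degenerate primes (those dividing $n$, an element of $A$, or a difference of elements of $A$) via the observation that $M\le CN^{L}$ makes their Mertens-type sum $O(\log\log N)$, exactly as in the paper's appeal to \cite[Lemma 2.1]{CY25}. The only differences are cosmetic bookkeeping choices (packaging the bad primes into the divisor $M_{0}$, and absorbing the $m\sqrt{p}$ term by discarding primes below a fixed $p_{0}$ rather than carrying it through the asymptotics).
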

\begin{proof}
Let $\delta=\frac{k^m\ep}{\phi(k)}$. Consider the set of primes $\mathcal{P}_0=\{p\leq Q: p \equiv 1 \pmod k\}$, where $Q=N^{\frac{(1+\delta)\phi(k)}{k^m}}$. For each $p \in \mathcal{P}_0$, let $A_p$ be the image of $A$ modulo $p$ and view $A_p$ as a subset of  $\F_p$; similarly, define $B_p$. 

Consider the following subset of $\mathcal{P}_0$:
$$\mathcal{P}=\{p\in \mathcal{P}_0: \quad|A_p|=m, \quad 0 \notin A_p, \quad p \nmid n\}.$$
For each $p \in \mathcal{P}$, we have $A_p \subseteq \F_p^*$ with $|A_p|=m$, $p \nmid n$, and $ab+n \in \{x^k: x \in \F_p\}$ for each $a\in A_p$ and $b\in B_p$, and thus Lemma~\ref{lem:Weil} implies that $|B_p|\leq \frac{p}{k^m}+m\sqrt{p}.$ 

On the other hand, note that the number of ``bad" primes $p \in \mathcal{P}_0$, that is, primes $p$ such that one of $|A_p|<m$, $0\in A_p$, and $p\mid n$ holds, is $\ll_{m, C, L} \log N$, since the number of prime divisors of a positive integer $\leq MN$ is $\ll_{C, L} \log N$. Thus, $|\mathcal{P}_0 \setminus \mathcal{P}|\ll_{m,C,L} \log N$ and \cite[Lemma 3.10]{CY25} implies that
$$
\sum_{p \in \mathcal{P}_0 \setminus \mathcal{P}} \frac{\log p}{p} \ll_{m,C,L} \log \log N.
$$
Thus, 
\begin{equation}\label{eq:logQ1}
\sum_{p \in \mathcal{P}_0 \setminus \mathcal{P}} \frac{\log p}{|B_p|}\geq \sum_{p \in \mathcal{P}_0 \setminus \mathcal{P}} \frac{\log p}{\frac{p}{k^m}+m\sqrt{p}} \geq\sum_{p \in \mathcal{P}_0} \frac{\log p}{\frac{p}{k^m}+m\sqrt{p}}\ -O_{k,m,C,L}(\log \log N).
\end{equation}
By the prime number theorem for arithmetic progressions, we have
\begin{equation}\label{eq:logQ2}
\sum_{p \in \mathcal{P}_0} \frac{\log p}{p} \sim \frac{\log Q}{\phi(k)}.
\end{equation}
Then, when $N$ is sufficiently large compared to $k,m,C,L,\delta$ so that $Q$ is sufficiently large, estimates~\eqref{eq:logQ1} and~\eqref{eq:logQ2} imply that
$$
\sum_{p \in \mathcal{P}_0 \setminus \mathcal{P}} \frac{\log p}{|B_p|} \geq \frac{2+\delta}{2+2\delta} \frac{k^m\log Q}{\phi(k)}=\bigg(1+\frac{\delta}{2}\bigg)\log N.
$$
Thus, when $N$ is sufficiently large, Gallagher's sieve (Lemma~\ref{GS}) implies that
\[ 
|B|\le \frac{\sum_{p \in \mathcal{P}}\log p - \log N}{\sum_{p \in \mathcal{P}} \frac{\log p}{|B_{p}|}-\log N}\leq \frac{4Q}{\delta \log N}\ll_{\delta} Q\ll_{\ep} N^{\frac{\phi(k)}{k^m}+\ep},
\]
as required.
\end{proof}

\subsection{A lemma from additive combinatorics}

Let $S$ be a finite subset of real numbers, and $h$ be a positive integer, the \emph{$h$-fold restricted sumset of $S$} is
$$
h^{\wedge}S=\{s_1+s_2+\cdots+s_h: s_1, s_2, \ldots, s_h \text{ are distinct elements in } S\}.
$$
A special case of the main result of Dias da Silva and Hamidoune \cite{DH94} implies that $|h^{\wedge} S|\geq h(|S|-h)+1$. For a finite subset $A$ of positive integers, the \emph{$h$-fold restricted product set of $A$}
is
$$
A^{(h)}=\{a_1a_2\ldots a_h: a_1,a_2,\ldots, a_h \text{ are distinct elements in } A\}.
$$
Let $B=\{\log a: a \in A\}$; then $B$ is a subset of real numbers and we have
$$
A^{(h)}=\{\exp(s): s\in h^{\wedge}B\}.
$$
Thus, we have proved the following lemma.
\begin{lem}\label{lem:hfold}
Let $A$ be a finite subset of positive integers, and $h$ be a positive integer. Then $|A^{(h)}|\geq h(|A|-h)+1$.
\end{lem}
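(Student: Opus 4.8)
The plan is to transfer the statement from the multiplicative to the additive setting via the logarithm and then quote the Dias da Silva--Hamidoune bound on restricted sumsets, exactly as foreshadowed in the paragraph preceding the lemma. First I would set $B=\{\log a\colon a\in A\}$. Since $x\mapsto \log x$ is strictly increasing on the positive reals, it is injective, so $|B|=|A|$; moreover, since $\log$ carries products to sums and preserves distinctness of tuples, every product $a_1a_2\cdots a_h$ of distinct elements of $A$ corresponds to the sum $\log a_1+\cdots+\log a_h$ of distinct elements of $B$, and conversely $x\mapsto e^x$ recovers the product from the sum. Hence the map is a bijection between $A^{(h)}$ and $h^{\wedge}B$, giving $|A^{(h)}|=|h^{\wedge}B|$.

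Second, I would invoke the quoted special case of the Dias da Silva--Hamidoune theorem, which asserts $|h^{\wedge}B|\geq h(|B|-h)+1$ for any finite set $B$ of real numbers (the inequality being trivially true when $h>|B|$, since the right-hand side is then nonpositive while $h^{\wedge}B=\emptyset$). Combining this with $|B|=|A|$ yields $|A^{(h)}|=|h^{\wedge}B|\geq h(|A|-h)+1$, which is the claim. I might also note in passing that the bound carries content only in the range $2\leq h\leq |A|$.

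The main point to get right is not a genuine obstacle but a routine verification: the fact that the $\log/\exp$ correspondence is a genuine bijection between $h$-fold restricted \emph{products} in $\N$ and $h$-fold restricted \emph{sums} in $\R$. This rests on two elementary facts---$\log$ is injective, and it converts products into sums---so the cardinality identity $|A^{(h)}|=|h^{\wedge}B|$ follows with no room for subtlety. All the combinatorial weight of the lemma is supplied by the Dias da Silva--Hamidoune estimate, which the excerpt permits us to assume, so the argument is short.
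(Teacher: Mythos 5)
Your proposal is correct and follows essentially the same route as the paper: passing to $B=\{\log a\colon a\in A\}$, identifying $A^{(h)}$ with $\{\exp(s)\colon s\in h^{\wedge}B\}$, and applying the Dias da Silva--Hamidoune bound $|h^{\wedge}B|\geq h(|B|-h)+1$. Nothing further is needed.
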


\section{An Anti-gap principle for bipartite Diophantine tuples with property $BD_2(n)$}\label{sec:antigap}

In this section, we establish the following anti-gap principle for bipartite Diophantine tuples with property $BD_2(n)$. In particular, it extends a result of Dujella \cite[Lemma 2]{D02} on Diophantine tuples with property $D_2(n)$.

\begin{prop}\label{prop:antigap}
Let $n$ be a nonzero integer. Assume that $a_{1} < a_{2} < a_{3}$ and $b_{1} < b_{2}$ are positive integers such that $(A,B)$ with $A = \{ a_{1}, a_{2}, a_{3} \}$ and $B = \{ b_{1}, b_{2} \}$ forms a bipartite Diophantine tuple with property $BD_2(n)$. If $b_{1} > a_{3}^{17}n^{10}$ and $a_3\geq7$, then $b_{2} < b_{1}^{120}$.
\end{prop}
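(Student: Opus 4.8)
The plan is to exploit the six perfect-square conditions $a_ib_j+n=\square$ ($1\le i\le 3$, $1\le j\le 2$) simultaneously, following the strategy of Dujella's anti-gap lemma \cite[Lemma 2]{D02} but in the bipartite setting. Write $a_ib_1+n=r_i^2$ and $a_ib_2+n=s_i^2$ for positive integers $r_i,s_i$. The starting point is the standard elimination: for a fixed $j$, considering the three relations $a_ib_j+n=\square$ and taking suitable $\Z$-linear combinations with coefficients in $\{a_1,a_2,a_3\}$ produces Pell-like identities. Concretely, $a_\mu(a_\nu b_j+n)-a_\nu(a_\mu b_j+n)=(a_\mu-a_\nu)n$, so $a_\mu s_\nu^2-a_\nu s_\mu^2=(a_\mu-a_\nu)n$ for the $b_2$-row (and the analogous identity with $r$'s for the $b_1$-row). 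The aim is to combine the $b_1$- and $b_2$-data to trap $b_2$ in terms of $b_1$: from $a_i b_1+n=r_i^2$ and $a_i b_2+n=s_i^2$ we get $a_i(b_2-b_1)=s_i^2-r_i^2=(s_i-r_i)(s_i+r_i)$, and since $b_1>a_3^{17}n^{10}$ is large, $r_i\approx\sqrt{a_i b_1}$ and $s_i\approx\sqrt{a_i b_2}$ are forced to lie very close to these square roots, with error controlled by $|n|$.

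The key steps, in order, are: (1) From the three relations in each row, build the quantities $v_j=\gcd$-type combinations or, more cleanly, use the identity $(a_2 r_1 - a_1 r_2)(a_2 r_1 + a_1 r_2)=a_1 a_2(a_2-a_1)b_1 + \text{(lower order)}$ — more precisely expand $a_2^2 r_1^2 - a_1^2 r_2^2 = a_2^2(a_1 b_1+n)-a_1^2(a_2 b_1 + n)=a_1a_2(a_2-a_1)b_1+(a_2^2-a_1^2)n$, and similarly for $s$'s and $b_2$. (2) Pair up the $a_1,a_2$ row with itself across $b_1,b_2$: the curve/recurrence structure shows that the pairs $(r_1,r_2)$ and $(s_1,s_2)$ are consecutive (or near-consecutive) solutions of the same Pellian equation $a_2 X^2 - a_1 Y^2 = (a_2-a_1)n$, so that $s_1/r_1$ and $s_2/r_2$ are each bounded below by roughly the fundamental-solution growth rate, which is bounded in terms of $a_1,a_2$. (3) Feed in the third index $a_3$: having three solutions of three coupled Pell equations forces an \emph{upper} bound on the next one, because three simultaneous Pell equations on a fixed base cannot have solutions spaced too far apart — quantitatively, $b_2$ must be within a bounded power of $b_1$. (4) Track all implied constants explicitly in terms of $a_3$ and $n$, using $a_3\ge 7$ and $b_1>a_3^{17}n^{10}$ to absorb every error term, landing at the clean exponent $120$.

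The main obstacle I anticipate is step (3): getting an honest \emph{upper} bound on $b_2$ (rather than just a lower bound or a gap principle) from the Pell structure. A single Pell equation places no upper bound on its solutions; one genuinely needs the interaction of \emph{three} of them with a common variable $b_j$, and the mechanism is that the ratios $s_i/r_i$ for $i=1,2,3$ cannot all simultaneously equal the same power of a fundamental unit unless the corresponding fundamental units are multiplicatively dependent, which over-determines $b_2/b_1$. Making this precise — likely via a determinant/resultant computation showing $a_1,a_2,a_3$ and $n$ satisfy a nontrivial relation, or via the observation that $\sqrt{a_i b_2}-\sqrt{a_i b_1}$ being near-integral for all three $i$ forces $b_2 \le b_1^{O(1)}$ — is where the real work lies, and where the hypotheses $a_3\ge 7$ and the large lower bound on $b_1$ (ensuring the approximations $r_i = \lfloor\sqrt{a_ib_1}\rfloor + O_n(1)$ are tight) get used decisively. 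The bookkeeping to reach the explicit constant $120$ from $17$ and $10$ is then routine but must be done carefully so the error terms from $n$ and $a_3$ never dominate.
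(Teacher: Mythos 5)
Your setup (writing the six square conditions, eliminating $b_2$ to get the simultaneous Pell-type equations $a_1z^2-a_3x^2=n(a_1-a_3)$, $a_2z^2-a_3y^2=n(a_2-a_3)$) matches the first step of the paper's proof, but the argument stops exactly where the real content begins, and the mechanisms you sketch for your step (3) would not deliver the bound. The paper's proof hinges on an explicit \emph{simultaneous rational approximation} theorem of Bennett (Theorem 3.2 in the paper): the quantities $\theta_1=\sqrt{1+n(a_3-a_1)a_2/(a_1a_2t^2)}$ and $\theta_2=\sqrt{1+n(a_3-a_2)a_1/(a_1a_2t^2)}$ (built from the $b_1$-data, $t^2=a_3b_1+n$) admit an effective irrationality measure with exponent $\lambda<2$, with all constants explicit in $a_i$, $n$, $t$. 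On the other hand, the $b_2$-data produce a common-denominator rational approximation to $(\theta_1,\theta_2)$ of quality roughly $q^{-2}$ (the paper's Claim 3.4: the error is at most $a_3|n|/(a_1z^2)$ with $q=a_1a_2tz$). Comparing the two estimates bounds $z$, and hence $b_2$, by an explicit power of $b_1$; the hypotheses $b_1>a_3^{17}n^{10}$ and $a_3\geq 7$ are used to make $N>M^9$ in Bennett's theorem, to force $\lambda<2$, and to control the bookkeeping that yields the exponent $120$. Nothing in your proposal supplies a substitute for this effective exponent-$<2$ input.

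Concretely, the ideas you float for step (3) do not close the gap. The claim that $(r_1,r_2)$ and $(s_1,s_2)$ are ``consecutive or near-consecutive'' solutions of the same Pellian equation is unsupported --- bounding how far apart they can be is precisely the statement to be proved. The ``fundamental units multiplicatively dependent'' heuristic and the near-integrality of $\sqrt{a_ib_2}-\sqrt{a_ib_1}$ do not, as stated, produce any upper bound on $b_2$: finiteness of solutions to a pair of simultaneous Pell equations is a Thue/Siegel-type statement, and turning it into an \emph{explicit polynomial} bound $b_2<b_1^{120}$ requires an effective tool such as Bennett's hypergeometric-method approximation result (or linear forms in logarithms), which you never invoke. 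Note also that the known elementary gap principle for simultaneous Pell equations (Bennett's Lemma 6.1, used elsewhere in the paper) runs in the opposite direction --- it gives \emph{lower} bounds on the spacing of successive solutions --- so it cannot play the role you assign to the Pell structure here. As written, the proposal is a plausible plan with the decisive step missing.
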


A key ingredient of the proof is the following well-known theorem on simultaneous approximation of square roots of two rationals, due to Bennett \cite{B98}.

\begin{thm}[Bennett \cite{B98}]\label{thm:Thm5_v1}
If $c_{i}$, $p_{i}$, $q$ and $N$ are integers for $0 \leq i \leq 2$ with $c_{0} < c_{1} < c_{2}$, $c_{j} = 0$ for some $0 \leq j \leq 2$, $q \geq 1$ and $N > M^9$, where $M = \max_{0 \leq i \leq 2} \{ \abs{c_{i}} \} \geq 3$, then we have
\begin{align*}
    \max_{0 \leq i \leq 2} \left\{ \sqrt{1+ \frac{c_{i}}{N}} - \frac{p_{i}}{q} \right\} > (130 N \gamma)^{-1}q^{-\lambda}
\end{align*}
where 
\begin{align*}
    \lambda = 1 + \frac{\log \left (33N \gamma \right)}{\log \left( 1.7 N^{2} \prod_{0 \leq i < j \leq 2} (c_{i} - c_{j})^{-2} \right)}
\end{align*}
and 
\begin{equation}\label{eq:gamma}
    \gamma = 
    \begin{cases}
        \frac{(c_{2} - c_{0})^{2} (c_{2} - c_{1})^{2}}{2c_{2} - c_{0} - c_{1}} & \text{if } c_{2} - c_{1} \geq c_{1} - c_{0} \\
        \frac{(c_{2} - c_{0})^{2} (c_{1} - c_{0})^{2}}{c_{1} + c_{2} - 2c_{0}} & \text{if } c_{2} - c_{1} < c_{1} - c_{0}
    \end{cases}
\end{equation}

\end{thm}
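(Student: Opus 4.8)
The plan is to adapt Dujella's proof of \cite[Lemma~2]{D02}, replacing the Diophantine-triple input there by the three simultaneous relations coming from $A$, and to use Bennett's simultaneous approximation theorem (Theorem~\ref{thm:Thm5_v1}) to exploit that $b_{2}$ is large. Write $a_{i}b_{j}+n=s_{ij}^{2}$ with $s_{ij}\in\N$ for $1\le i\le 3$ and $1\le j\le 2$. Everything rests on the elementary identity, valid for distinct indices $i,k$ and for $j=1,2$,
\[
\Bigl(\frac{s_{ij}}{s_{kj}}\Bigr)^{2}\cdot\frac{a_{k}}{a_{i}}=1+\frac{n\,(a_{k}-a_{i})}{a_{i}\,(a_{k}b_{j}+n)},
\]
which shows that $\tfrac{s_{ij}}{s_{kj}}\sqrt{a_{k}/a_{i}}$ is the square root of a rational lying very close to $1$ when $b_{j}$ is large. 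Using the $j=1$ relations I would build three real numbers $\vartheta_{1},\vartheta_{2},\vartheta_{3}$, one of them equal to $1$ and the other two of the shape $\tfrac{s_{i1}}{s_{k1}}\sqrt{a_{k}/a_{i}}$ for a fixed pivot index, so that after clearing denominators each equals $\sqrt{1+c_{i}/N}$ for a single modulus $N$; by construction $N$ is divisible by $a_{3}b_{1}+n$, so $N\gg b_{1}$, while the $c_{i}$ are integers with $c_{i}=0$ for one $i$ and $M:=\max_{i}|c_{i}|\le |n|\,a_{3}^{O(1)}$. I would then check the hypotheses of Theorem~\ref{thm:Thm5_v1}: after relabelling, $c_{0}<c_{1}<c_{2}$ are distinct, $M\ge 3$ (this uses $a_{3}\ge 7$), and, crucially, $N>M^{9}$ --- and the point of the hypothesis $b_{1}>a_{3}^{17}n^{10}$ is precisely that it forces $N>M^{9}$, with a comfortable margin.

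For the rational approximations I would use the $j=2$ relations: replacing each $\sqrt{a_{k}/a_{i}}$ occurring in $\vartheta_{i}$ by $\tfrac{s_{k2}}{s_{i2}}$ produces rationals $p_{i}/q$, and the displayed identity with $j=2$, together with $|\sqrt{X}-\sqrt{Y}|=|X-Y|/(\sqrt{X}+\sqrt{Y})$, gives the bound
\[
\max_{i}\Bigl|\vartheta_{i}-\frac{p_{i}}{q}\Bigr|\ll \frac{|n|\,a_{3}^{O(1)}}{b_{2}} .
\]
The part I expect to be most delicate is arranging the construction so that the common denominator $q$ carries as little $b_{2}$-dependence as possible: since each $s_{i2}$ has size $\asymp b_{2}^{1/2}$, one wants the square-root ratios feeding the $\vartheta_{i}$ to be organised around a common ``denominator vertex'', so that $q$ grows only like $b_{2}^{1/2}$ in $b_{2}$ (times factors depending only on $a_{3},n,b_{1}$). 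This is exactly what makes the final inequality bite on $b_{2}$ rather than be vacuous, and it is where the choice of pivot index and of which ratios to use must be made carefully; one must also dispose of the small gcd contaminations in the various $\operatorname{lcm}$'s.

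Finally I would combine the two estimates. Theorem~\ref{thm:Thm5_v1} gives $\max_{i}|\vartheta_{i}-p_{i}/q|>(130N\gamma)^{-1}q^{-\lambda}$ with $\gamma\ll M^{O(1)}\ll(|n|a_{3})^{O(1)}$ and, because $N$ exceeds $M^{9}$ comfortably, $\lambda$ bounded away from $2$ by an absolute positive constant, say $\lambda\le 2-\delta$. Comparing with the upper bound of the previous paragraph and inserting $q\gg b_{2}^{1/2}$, $N\ll a_{3}^{O(1)}b_{1}$, $\gamma\ll(|n|a_{3})^{O(1)}$ yields an inequality of the form $b_{2}^{\,1-\lambda/2}\ll b_{1}^{\,1+\lambda}\,(a_{3}|n|)^{O(1)}$, whence (using $1-\lambda/2\ge\delta/2>0$) one obtains $b_{2}\ll b_{1}^{O(1)}(a_{3}|n|)^{O(1)}$. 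Invoking $b_{1}>a_{3}^{17}n^{10}$ one last time lets one absorb $a_{3}$ and $|n|$ into bounded powers of $b_{1}$, and tracking the implied constants through this chain --- with $a_{3}\ge 7$ used to handle the remaining small cases --- gives the stated bound $b_{2}<b_{1}^{120}$. The main obstacle throughout is the simultaneous numerology of the second and third steps: one must choose the $\vartheta_{i}$ and the $p_{i}/q$ so that at once (i) the three $\vartheta_{i}$ share one modulus $N$ with small $c_{i}$, (ii) $N>M^{9}$ with enough room that $\lambda$ stays away from $2$ uniformly, and (iii) $q$ grows no faster than $b_{2}^{1/2}$ in $b_{2}$, and then one must push the resulting exponent down to the clean value $120$.
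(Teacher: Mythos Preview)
Your proposal does not prove the stated theorem. Theorem~\ref{thm:Thm5_v1} is Bennett's simultaneous rational approximation result, quoted from \cite{B98}; the paper does not prove it, and neither do you --- you invoke it as a black box. What you have written is instead a proof sketch for Proposition~\ref{prop:antigap} (the anti-gap principle: if $b_{1}>a_{3}^{17}n^{10}$ and $a_{3}\ge 7$ then $b_{2}<b_{1}^{120}$). So there is a basic mismatch between the target statement and the argument offered.

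That said, if the intended target was Proposition~\ref{prop:antigap}, then your approach is essentially the same as the paper's. The paper sets $\theta_{1}=\tfrac{r\sqrt{a_{3}}}{t\sqrt{a_{1}}}$, $\theta_{2}=\tfrac{s\sqrt{a_{3}}}{t\sqrt{a_{2}}}$ (pivot index $3$, $j=1$ data), takes the common modulus $N=a_{1}a_{2}t^{2}$, approximates by the rationals $\tfrac{a_{2}a_{3}rx}{a_{1}a_{2}tz}$, $\tfrac{a_{1}a_{3}sy}{a_{1}a_{2}tz}$ built from the $j=2$ data, proves the clean upper bound $\max_i|\theta_i-p_i/q|<a_{3}|n|/(a_{1}z^{2})$, and then feeds this into Theorem~\ref{thm:Thm5_v1}. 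The paper tracks constants explicitly: it shows the numerator in $\lambda_{1}=2-\lambda$ exceeds $\tfrac{1}{10}\log b_{1}$, the two factors in the numerator of the bound on $\log z$ are at most $3\log b_{1}$ and $2\log b_{1}$, and hence $\log z<60\log b_{1}$, giving $b_{2}<b_{1}^{120}$. Your outline matches this in structure; the only places where you are less precise are the explicit pivot choice, the exact form of $N$ and the $c_i$, and the final numerical bookkeeping, all of which the paper pins down concretely rather than leaving as $O(1)$ exponents.
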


\begin{rem}\label{rem:gamma}
It is convenient to estimate the quantity $\gamma$ (defined in equation~\eqref{eq:gamma}) by the following inequality 
$$
\frac{(c_2-c_0)^3}{6}<\gamma<\frac{(c_2-c_0)^3}{2}.
$$
Here we present a quick proof of the above inequality. Let $A=c_2-c_1$ and $B=c_1-c_0$. If $c_2-c_2\geq c_1-c_0$, that is, $A\geq B$, then
$$
\gamma=\frac{(c_{2} - c_{0})^{2} (c_{2} - c_{1})^{2}}{2c_{2} - c_{0} - c_{1}}=\frac{(A+B)^2A^2}{2A+B}
$$
and it follows that 
$$
\frac{(A+B)^3}{6}< (A+B)^3 \cdot \frac{A}{A+B} \cdot \frac{A}{2A+B}=
\gamma < (A+B)^3 \cdot \frac{A}{2A+B}< \frac{(A+B)^3}{2}.
$$
The proof for the case $A<B$ is similar.
\end{rem}

Next, we present the proof of Proposition~\ref{prop:antigap}.

\begin{proof}[Proof of Proposition~\ref{prop:antigap}]

By definition, there are positive integers $r,s,t,x,y,z$ such that
\begin{align*}
    a_{1}b_{1} + n &= r^{2}, & a_{2}b_{1} + n &= s^{2}, & a_{3}b_{1} + n &= t^{2}, \\
    a_{1}b_{2} + n &= x^{2}, & a_{2}b_{2} + n &= y^{2}, & a_{3}b_{2} + n &= z^{2} .
\end{align*}
Eliminating $b_{2}$, we obtain the following system of Pell equations
\begin{align}
    a_{1}z^{2} - a_{3}x^{2} &= n(a_{1} - a_{3}), \label{eq:13}\\
    a_{2}z^{2} - a_{3}y^{2} &= n(a_{2} - a_{3}).  \label{eq:23}
\end{align}
Let
\begin{align*}
    \theta_{1} &:= \frac{r\sqrt{a_{3}}}{t\sqrt{a_{1}}} = \sqrt{1 + \frac{n(a_{3}-a_{1})}{a_{1}a_{3}b_{1} + na_{1}}} = \sqrt{1 + \frac{n(a_{3}-a_{1})a_{2}}{a_{1}a_{2}t^{2}}},\\
    \theta_{2} &:= \frac{s\sqrt{a_{3}}}{t\sqrt{a_{2}}} = \sqrt{1+ \frac{n(a_{3}-a_{2})}{a_{2}a_{3}b_{1} + na_{2}}} = \sqrt{1+ \frac{n(a_{3}-a_{2})a_{1}}{a_{1}a_{2}t^{2}}}.
\end{align*}
We shall approximate $\theta_{1}$ by the rational number
\begin{align*}
    \frac{a_{3}rx}{a_{1}tz} = \frac{a_{2}a_{3}rx}{a_{1}a_{2}tz},
\end{align*}
and approximate $\theta_{2}$ by the rational number
\begin{align*}
    \frac{a_{3}sy}{a_{2}tz} = \frac{a_{1}a_{3}sy}{a_{1}a_{2}tz} .
\end{align*}

\begin{claim}\label{lem:Lem1_general_n}
We have
\begin{align}
    \max \left( \abs[\bigg]{\theta_{1} - \frac{a_{2}a_{3}rx}{a_{1}a_{2}tz}}, \abs[\bigg]{\theta_{2} - \frac{a_{1}a_{3}sy}{a_{1}a_{2}tz}} \right) < \frac{a_{3}\abs{n}}{a_{1}z^2}.
\end{align}
\end{claim}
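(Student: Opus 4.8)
The plan is to convert each of the two differences into an exact closed form by means of the Pell equations~\eqref{eq:13} and~\eqref{eq:23}, and then to estimate the outcome, exploiting that the hypothesis $b_{1} > a_{3}^{17} n^{10}$ makes the shift $n$ negligible against every product $a_{i}b_{j}$ that appears.

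\textbf{Step 1: algebraic reduction.} Since $\frac{a_{2}a_{3}rx}{a_{1}a_{2}tz}=\frac{a_{3}rx}{a_{1}tz}$, factoring $\frac{r\sqrt{a_{3}}}{t a_{1} z}$ out of the first difference gives
$$
\theta_{1}-\frac{a_{3}rx}{a_{1}tz}=\frac{r\sqrt{a_{3}}}{t a_{1} z}\bigl(\sqrt{a_{1}}\,z-\sqrt{a_{3}}\,x\bigr),
$$
and rationalizing the last factor, using~\eqref{eq:13} in the form $a_{1}z^{2}-a_{3}x^{2}=n(a_{1}-a_{3})$, yields $\sqrt{a_{1}}\,z-\sqrt{a_{3}}\,x=\frac{n(a_{1}-a_{3})}{\sqrt{a_{1}}\,z+\sqrt{a_{3}}\,x}$, so that
$$
\Bigl|\theta_{1}-\frac{a_{3}rx}{a_{1}tz}\Bigr|=\frac{r\sqrt{a_{3}}\,(a_{3}-a_{1})\,|n|}{t\,a_{1}\,z\,(\sqrt{a_{1}}\,z+\sqrt{a_{3}}\,x)}.
$$
The same manipulation with~\eqref{eq:23} gives $\bigl|\theta_{2}-\frac{a_{3}sy}{a_{2}tz}\bigr|=\frac{s\sqrt{a_{3}}\,(a_{3}-a_{2})\,|n|}{t\,a_{2}\,z\,(\sqrt{a_{2}}\,z+\sqrt{a_{3}}\,y)}$. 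Both identities hold regardless of the sign of $n$.

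\textbf{Step 2: estimation.} Using $r^{2}=a_{1}b_{1}+n$, $s^{2}=a_{2}b_{1}+n$, $t^{2}=a_{3}b_{1}+n$, $x^{2}=a_{1}b_{2}+n$, $y^{2}=a_{2}b_{2}+n$, $z^{2}=a_{3}b_{2}+n$, together with $b_{2}>b_{1}>a_{3}^{17}n^{10}\geq a_{3}^{17}|n|$, one gets $|n|<a_{i}b_{j}/a_{3}^{17}$ for every index above, hence each $a_{i}b_{j}+n$ differs from $a_{i}b_{j}$ by a factor in $(1-a_{3}^{-17},1+a_{3}^{-17})$. Consequently $r\sqrt{a_{3}}/t$ is within such a factor of $\sqrt{a_{1}}$, $z$ is within such a factor of $\sqrt{a_{3}b_{2}}$, and — the key point — \emph{both} $\sqrt{a_{1}}\,z$ and $\sqrt{a_{3}}\,x$ are within such a factor of $\sqrt{a_{1}a_{3}b_{2}}$, so $\sqrt{a_{1}}\,z+\sqrt{a_{3}}\,x>\tfrac{19}{10}\sqrt{a_{1}a_{3}b_{2}}$. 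Plugging these into the closed form bounds the first quantity by something only barely above $\frac{(a_{3}-a_{1})|n|}{2a_{1}a_{3}b_{2}}$, while $z^{2}<\tfrac{11}{10}a_{3}b_{2}$ gives $\frac{a_{3}|n|}{a_{1}z^{2}}>\tfrac{10}{11}\cdot\frac{|n|}{a_{1}b_{2}}$; since $\frac{a_{3}-a_{1}}{2a_{3}}<\tfrac12$, the first inequality of the claim follows with a factor of roughly $2$ to spare. The second quantity is treated identically, using $a_{2}\geq a_{1}$ to replace $\frac{|n|}{a_{2}b_{2}}$ by the larger $\frac{|n|}{a_{1}b_{2}}$ at the end.

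\textbf{Main obstacle.} The only delicate part is the bookkeeping in Step 2: the factor $2$ in the denominator comes precisely from $\sqrt{a_{1}}\,z$ and $\sqrt{a_{3}}\,x$ agreeing to leading order, and discarding either summand loses it and breaks the estimate; so one must retain that factor while checking that the accumulated $(1\pm a_{3}^{-17})$ corrections stay comfortably inside the available slack. With $a_{3}\geq 7$ (so each correction is at most $1\pm 7^{-17}$) and $\frac{a_{3}-a_{1}}{a_{3}}<1$, this is routine, but it is cleanest to carry it out with explicit rational constants — replacing each corrected quantity by a fixed rational multiple of its main term — rather than with asymptotic notation.
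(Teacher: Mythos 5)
Your proof is correct and takes essentially the same route as the paper: the same algebraic identity, obtained by rationalizing $\sqrt{a_1}z-\sqrt{a_3}x$ via the Pell equation~\eqref{eq:13} (and likewise via~\eqref{eq:23} for $\theta_2$), followed by elementary size estimates; the only difference is that the paper splits on the sign of $n$ (using $\theta_1<1$ when $n<0$, and $x\sqrt{a_3}>z\sqrt{a_1}$ together with $b_1>n$ when $n>0$), whereas you invoke the full strength of $b_1>a_3^{17}n^{10}$ to handle both signs uniformly with multiplicative $(1\pm a_3^{-17})$ corrections. One small inaccuracy in your closing remark: discarding the summand $\sqrt{a_3}\,x$ does \emph{not} break the estimate, since $a_3-a_1\le a_3-1$ and $r\sqrt{a_3}/(t\sqrt{a_1})=\theta_1$ exceeds $1$ by at most a factor $1+a_3^{-17}$ (and is below $1$ when $n<0$), so the slack $a_3/(a_3-a_1)\ge a_3/(a_3-1)$ already suffices --- indeed this is exactly what the paper does in the case $n<0$.
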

\begin{poc}
By equation~\eqref{eq:13}, we have
$$
\abs[\bigg]{\theta_{1} - \frac{a_{2}a_{3}rx}{a_{1}a_{2}tz}}=\abs[\bigg]{\frac{r\sqrt{a_{3}}}{t\sqrt{a_{1}}} - \frac{a_{3}rx}{a_{1}tz}} = \frac{r\sqrt{a_{3}}}{a_{1}zt} \abs{z\sqrt{a_{1}} - x\sqrt{a_{3}}} = \frac{r\sqrt{a_{3}}}{a_{1}zt} \abs[\bigg]{\frac{n(a_{3}-a_{1})}{z\sqrt{a_{1}} + x\sqrt{a_{3}}}} .
$$
If $n < 0$, then $\theta_{1} = \frac{r\sqrt{a_{3}}}{t\sqrt{a_{1}}} = \sqrt{1 - \frac{\abs{n}(a_{3}-a_{1})a_{2}}{a_{1}a_{2}t^{2}}} < 1$ and we obtain
$$
\abs[\bigg]{\theta_{1} - \frac{a_{2}a_{3}rx}{a_{1}a_{2}tz}} < \frac{r\sqrt{a_{3}}\abs{n}a_{3}}{a_{1}\sqrt{a_{1}}tz^{2}} = \theta_1 \cdot  \frac{\abs{n}a_{3}}{a_{1}z^2} < \frac{a_{3}\abs{n}}{a_{1}z^2}.
$$
If $n > 0$, then $x\sqrt{a_{3}} > z\sqrt{a_{1}}$ by equation~\eqref{eq:13} and it follows that
$$
\abs[\bigg]{\theta_{1} - \frac{a_{2}a_{3}rx}{a_{1}a_{2}tz}} < \frac{\sqrt{a_{1}b_{1}+n}\sqrt{a_{3}}na_{3}}{2a_{1}\sqrt{a_{1}}z^{2}t} = \sqrt{1+\frac{n(a_{3}-a_{1})}{a_{1}t^{2}}}  \frac{a_{3}\abs{n}}{2a_{1}z^2} < \sqrt{1+\frac{na_3}{a_{1}a_3b_1}}  \frac{a_{3}\abs{n}}{2a_{1}z^2} <\frac{a_{3}\abs{n}}{a_{1}z^2},
$$
where we used the assumption that $b_1>n$ in the last step.

Similarly, we have
\[
\abs[\bigg]{\theta_{2} - \frac{a_{1}a_{3}sy}{a_{1}a_{2}tz}} < \frac{a_{3}\abs{n}}{a_{2}z^2} < \frac{a_3\abs{n}}{a_{1}z^2}.\qedhere
\]
\end{poc}

We apply Theorem \ref{thm:Thm5_v1} with $N = a_{1}a_{2}t^{2}$, $M = \abs{n(a_{3} - a_{1})a_{2}}$, $q = a_{1}a_{2}tz$, $c_1= n(a_{3} - a_{2})a_{1}$, and $p_{1} = a_{2}a_{3}rx$. Additionally, if $n>0$, we set 
$$
c_0=0, c_2=n(a_{3} - a_{1})a_{2}, p_0=q, p_{2} = a_{1}a_{3}sy;
$$
if $n<0$, we instead set
$$
c_2=0, c_0=n(a_{3} - a_{1})a_{2}, p_2=q, p_{0} = a_{1}a_{3}sy.
$$
Note that the assumption $b_{1} > a_{3}^{17}n^{10}$ implies that 
$$
N=a_1a_2t^2=a_1a_2(a_3b_1+n)>\frac{a_1a_2a_3b_1}{2}>\frac{a_1a_2a_3^{18} n^{10}}{2}>a_3^{18}n^{10}>|n|^9 a_3^9a_2^9>M^9.
$$
We have $c_2-c_0=|n|(a_3-a_1)a_2$ and thus the quantity $\gamma$ from Theorem~\ref{thm:Thm5_v1} satisfies 
\begin{align}\label{eq:gamma1}
    \frac{(a_{3}-a_{1})^{3}a_{2}^{3}}{6}\abs{n}^{3} < \gamma < \frac{(a_{3}-a_{1})^{3}a_{2}^{3}}{2}\abs{n}^{3}
\end{align}
by Remark~\ref{rem:gamma}. For the quantity $\lambda$ from Theorem \ref{thm:Thm5_v1} we have
\begin{align*}
    \lambda = 1 + \frac{\log(33a_{1}a_{2}t^{2} \gamma)}{\log(1.7t^{4}a_{3}^{-2}(a_{3}-a_{2})^{-2}(a_{3}-a_{1})^{-2}(a_{2}-a_{1})^{-2} n^{-6})} = 2-\lambda_{1},
\end{align*}
where
\begin{align}\label{eq:lambda1}
    \lambda_{1} = \frac{\log(\frac{1.7t^{2}}{33a_{1}a_{2}a_{3}^{2} (a_{3}-a_{2})^{2}(a_{3}-a_{1})^{2}(a_{2}-a_{1})^{2}n^{6}\gamma})}{\log(1.7t^{4}a_{3}^{-2}(a_{3}-a_{2})^{-2}(a_{3}-a_{1})^{-2}(a_{2}-a_{1})^{-2}n^{-6})}.
\end{align}
By inequality~\eqref{eq:gamma1}, we have
\begin{align*}
&33a_{1}a_{2}a_{3}^{2} (a_{3}-a_{2})^{2}(a_{3}-a_{1})^{2}(a_{2}-a_{1})^{2}n^{6}\gamma\\
&<\frac{33}{2} a_{1}a_{2}^4 a_{3}^{2} (a_{3}-a_{2})^{2}(a_{3}-a_{1})^{5}(a_{2}-a_{1})^{2}|n|^{9}\\
&<\frac{33}{2} a_{1}(a_{3}-a_{1})a_{2}^2 (a_{3}-a_{2})^{2}a_3^{10}|n|^{9}
<\frac{33}{128}a_3^{16}|n|^9,    
\end{align*}
where in the last step we used the inequality $a_j(a_3-a_j)\leq \frac{a_3^2}{4}$ that holds for $j\in \{1,2\}$. Since $b_1>a_3^{17}n^{10}$ and $a_3>7$, the above inequality implies that
\begin{equation}\label{eq:10log}
\frac{1.7t^{2}}{33a_{1}a_{2}a_{3}^{2} (a_{3}-a_{2})^{2}(a_{3}-a_{1})^{2}(a_{2}-a_{1})^{2}n^{6}\gamma}>\frac{\frac{1.7}{2}a_3b_1}{\frac{33}{128}a_3^{16}|n|^9}>\frac{b_1}{a_3^{15}|n|^9}>b_1^{\frac{1}{10}}.    
\end{equation}
Thus, $0 < \lambda_{1} < 1$ (i.e. $1 < \lambda < 2$).

Observe that
$$
 \max_{0 \leq i \leq 2} \left\{ \sqrt{1+ \frac{c_{i}}{N}} - \frac{p_{i}}{q} \right\}=\max \left( \abs[\bigg]{\theta_{1} - \frac{a_{2}a_{3}rx}{a_{1}a_{2}tz}}, \abs[\bigg]{\theta_{2} - \frac{a_{1}a_{3}sy}{a_{1}a_{2}tz}} \right).
$$
Thus Theorem \ref{thm:Thm5_v1} and Claim~\ref{lem:Lem1_general_n} imply that
\begin{align*}
    \frac{a_{3}\abs{n}}{a_{1}z^{2}} > (130a_{1}a_{2}t^{2}\gamma)^{-1}(a_{1}a_{2}tz)^{-\lambda},
\end{align*}
which implies
\begin{align*}
    z^{\lambda_{1}} < 130 a_{1}^{\lambda} a_{2}^{\lambda + 1} a_{3} t^{\lambda + 2} \abs{n} \gamma < 130 a_{1}^{2} a_{2}^{3} a_{3} t^{4} \abs{n} \gamma
\end{align*}
and thus equation~\eqref{eq:lambda1} implies that
\begin{align}\label{eq:logz_estimate_in_lem2_general_n}
    \log z < \frac{\log(130 a_{1}^{2} a_{2}^{3} a_{3} t^{4} \abs{n} \gamma) \log(1.7t^{4}a_{3}^{-2}(a_{3}-a_{2})^{-2}(a_{3}-a_{1})^{-2}(a_{2}-a_{1})^{-2}n^{-6})}{\log(\frac{1.7t^{2}}{33a_{1}a_{2}a_{3}^{2} (a_{3}-a_{2})^{2}(a_{3}-a_{1})^{2}(a_{2}-a_{1})^{2}n^{6}\gamma})}.
\end{align}

Let us estimate the right-hand side of inequality~\eqref{eq:logz_estimate_in_lem2_general_n}. Since $a_3\geq7$ and $b_1 > a_3^{17}n^{10}$, we have
\begin{align}\label{eq:3log}
    130 a_{1}^{2} a_{2}^{3} a_{3} t^{4} \abs{n} \gamma 
    &< 65 a_{1}^{2} a_{2}^{6} a_{3} (a_{3} - a_{1})^{3} (a_{3}b_{1}+n)^{2} n^{4} \notag\\
    &< 65 a_1^2 a_2^6 a_3^{4} (2a_3b_1 n)^2 n^4 <  260a_3^{14}n^6 b_1^2<  b_{1}^{3}.
\end{align}
Since $a_3\geq 7$, 
\begin{align}\label{eq:2log}
    &1.7t^{4}a_{3}^{-2}(a_{3}-a_{2})^{-2}(a_{3}-a_{1})^{-2}(a_{2}-a_{1})^{-2} n^{-6} \notag\\
    &\leq \frac{1.7(a_3b_1+n)^2}{4a_3^2n^6}\leq \frac{1.7}{4} \bigg(\frac{b_1}{|n|^3}+\frac{1}{a_3}\bigg)^2\leq \frac{1.7}{4} \bigg(b_1+\frac{b_1}{7}\bigg)^2<b_1^2.
\end{align}

Inserting the bounds in inequalities~\eqref{eq:10log},~\eqref{eq:3log},~\eqref{eq:2log} into the right-hand side of inequality~\eqref{eq:logz_estimate_in_lem2_general_n}, we obtain
\begin{align*}
    \log z < \frac{(3 \log b_{1})(2\log b_{1}) }{\frac{1}{10}\log b_{1}} = 60 \log b_{1} .
\end{align*}
Hence $z < b_{1}^{60}$. Since $a_3\geq 7$, we conclude that
\[
b_{2} = \frac{z^{2}-n}{a_{3}} \leq \frac{z^{2}+\abs{n}}{a_{3}} < \frac{b_{1}^{120} + b_{1}^{\frac{1}{10}}}{7} < b_{1}^{120}.\qedhere
\]
\end{proof}

As an application of Proposition~\ref{prop:antigap}, we deduce the following corollary on multiplicative Hilbert cubes contained in shifted squares.

\begin{cor}\label{cor:loga0}
Assume that $H^\times (a_0;a_1,a_2,\ldots, a_d) \subseteq \{x^2-n: x \in \N\}$ with $d\geq 7$, where $a_0, a_1, \ldots, a_d$ are positive integers with $a_1<a_2<\ldots<a_d$, and $n$ is a nonzero integer. Then $d\ll 1+\frac{\log a_0}{\log a_7}+\frac{\log \max \{n^{10}/a_0,1\}}{\log a_7}$, where the implied constant is absolute. 
\end{cor}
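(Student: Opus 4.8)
The plan is to extract bipartite Diophantine tuples with property $BD_2(n)$ from the cube and feed them into Proposition~\ref{prop:antigap}. Since $d\geq 7$, the seven smallest generators are distinct positive integers, so $a_7\geq 7$; we may further assume $a_1\geq 2$ (a generator equal to $1$ contributes nothing to the cube and can be deleted, which only replaces $a_7$ by a larger $a_8$ in the bound) and $d\geq 8$ (the case $d=7$ being trivial). Put $A=\{a_5,a_6,a_7\}$, $T=\{1,2,3,4\}\cup\{8,9,\ldots,d\}$, and for $I\subseteq T$ set $P_I=a_0\prod_{i\in I}a_i$, so that $\max_I P_I=P_T=a_0\prod_{i\in T}a_i$. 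The key point is that for each $j\in\{5,6,7\}$ one has $a_jP_I=a_0\prod_{i\in I\cup\{j\}}a_i\in H^\times(a_0;a_1,\ldots,a_d)$, hence $a_jP_I+n$ is a perfect square; thus $(A,\{P_I,P_J\})$ is a bipartite Diophantine tuple with property $BD_2(n)$ for every $I\neq J$, and Proposition~\ref{prop:antigap} applies to it — with $a_7$ in the role of ``$a_3$'', so the hypothesis $a_3\geq 7$ holds — whenever $\min\{P_I,P_J\}>a_7^{17}|n|^{10}$.

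First I would clear away the easy regimes. If $a_0>a_7^{17}|n|^{10}$, apply the anti-gap principle to $b_1=a_0=P_\emptyset$ and $b_2=a_0\prod_{i=8}^d a_i=P_{\{8,\ldots,d\}}$ to get $\prod_{i=8}^d a_i<a_0^{119}$; since $a_8<\cdots<a_d$ are $d-7$ integers each exceeding $a_7$, this forces $a_7^{d-7}<a_0^{119}$, i.e. $d\ll 1+\tfrac{\log a_0}{\log a_7}$, and here $\max\{n^{10}/a_0,1\}=1$. So assume $a_0\leq a_7^{17}|n|^{10}$. If moreover $\max_I P_I\leq a_7^{17}|n|^{10}$, then $a_7^{d-7}<\prod_{i\in T}a_i\leq\max_I P_I\leq a_7^{17}|n|^{10}$, giving $d\ll 1+\tfrac{\log|n|}{\log a_7}$ directly. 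This leaves the case $a_0\leq a_7^{17}|n|^{10}<\max_I P_I$; let $b'=P_{I'}$ be the largest of the $P_I$ not exceeding $a_7^{17}|n|^{10}$ (it exists, as $P_\emptyset=a_0$ qualifies), so $I'\neq T$.

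Now I would split according to whether the ``gap'' in the family $\{P_I\}$ at the threshold level can be bridged cheaply. If some $j\in T\setminus I'$ has $2\leq a_j\leq a_7^{17}|n|^{10}$, then $b^{*}:=b'a_j=P_{I'\cup\{j\}}$ lies in the window $\big(a_7^{17}|n|^{10},(a_7^{17}|n|^{10})^{2}\big]$ — the lower bound precisely because $b'$ was maximal below the threshold — so applying the anti-gap principle to $b^{*}$ and $\max_I P_I$ yields $\max_I P_I<(b^{*})^{120}$, hence $a_7^{d-7}<\prod_{i\in T}a_i\leq\max_I P_I<(a_7^{17}|n|^{10})^{240}$ and $d\ll 1+\tfrac{\log|n|}{\log a_7}$. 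The \emph{main obstacle} is the opposite sub-case, in which every generator indexed by $T\setminus I'$ exceeds $a_7^{17}|n|^{10}$: then \emph{every} element of $\{P_I\}$ above the threshold is itself enormous, and a one-step application of the anti-gap principle $b_2<b_1^{120}$ is vacuous. My proposed fix is to test, instead of consecutive products, the \emph{extreme} pair and let the full product of the ``huge'' generators $h_1<h_2<\cdots<h_H$ do the work. First, $\prod_{i\in I'\cap\{8,\ldots,d\}}a_i\leq b'/a_0\leq a_7^{17}|n|^{10}$ with each factor $>a_7$, so $|I'|\leq 21+\tfrac{10\log|n|}{\log a_7}$ and $H=(d-3)-|I'|$; if $H\leq 120$ we are done, otherwise apply the anti-gap principle to $b_1=a_0h_1$ (one of the $P_I$) and $b_2=\max_I P_I=b'h_1\cdots h_H$. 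Since $b_2>a_0h_1^{H}$ while $b_1^{120}=a_0^{120}h_1^{120}$, we obtain $h_1^{H-120}<a_0^{119}$, and as $h_1>a_7^{17}$ this gives $H-120<7\tfrac{\log a_0}{\log a_7}$. Summing, $d=|I'|+H+3\ll 1+\tfrac{\log a_0}{\log a_7}+\tfrac{\log|n|}{\log a_7}$.

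In every case the conclusion has the shape $d\ll 1+\tfrac{\log a_0}{\log a_7}+\tfrac{\log|n|}{\log a_7}$, which is exactly the claimed estimate once one observes the identity $\tfrac{\log a_0}{\log a_7}+\tfrac{\log\max\{n^{10}/a_0,1\}}{\log a_7}=\tfrac{\log\max\{a_0,n^{10}\}}{\log a_7}$ and notes that this quantity is both $\geq\tfrac{10\log|n|}{\log a_7}$ and $\geq\tfrac{\log a_0}{\log a_7}$; the second part of the corollary ($d\leq 2^{(1+o(1))\log(a_1|n|)/(2\log\log(a_1|n|))}$) is not asserted here, so nothing further is needed. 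The routine points to verify are the reduction to $a_1\geq 2$, the elementary comparisons of products of the $a_i$ with powers of $a_7$, and, in each invocation of Proposition~\ref{prop:antigap}, that the two chosen products are genuinely distinct with the smaller one above the threshold (the few degenerate possibilities, such as $|T\setminus I'|\le 1$, fall directly under the easy bounds of the second paragraph). The real content is the recognition that, in the presence of gigantic generators, one must compare $a_0h_1$ with $\max_I P_I$ so that the large exponent $H$ lands on the correct side of the anti-gap inequality.
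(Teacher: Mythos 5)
Your proposal is correct and follows essentially the paper's route: both arguments extract a bipartite pair $(\{a_5,a_6,a_7\},\{b_1,b_2\})$ from subset products of the cube with $b_1$ pushed above the threshold $a_7^{17}n^{10}$, invoke Proposition~\ref{prop:antigap} to get $b_2<b_1^{120}$, and then count generators against powers of $a_7$. The paper does this in a single stroke by taking $b_1=a_0\prod_{j=8}^{m+7}a_j$ with $m$ minimal so that $a_0a_7^m$ exceeds the threshold and measuring the conclusion in powers of $a_{m+7}$, which automatically absorbs the situations (large $a_0$, total product below the threshold, huge individual generators) that your argument handles by separate cases such as the $a_0h_1$ versus full-product pairing; those cases are nonetheless valid.
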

\begin{proof}
Let $m$ be the smallest positive integer such that $a_0a_7^m>a_7^{17}n^{10}$; note that we have $m\ll 1+\frac{\log \max \{n^{10}/a_0,1\}}{\log a_7}$. If $d\leq m+7$, then we are done. Next, assume that $d\geq m+8$. Let
$$
b_1=a_0 \cdot \prod_{j=8}^{m+7} a_j.
$$
Then we have $b_1>a_0a_7^m>a_7^{17}n^{10}$ by assumption. Let
$$
b_2=a_0 \cdot \prod_{j=m+8}^{d} a_j.
$$
Now $(A,B)$ with $A = \{ a_{5}, a_{6}, a_{7} \}$ and $B = \{ b_{1}, b_{2} \}$ forms a bipartite Diophantine tuple with property $BD_2(n)$ with $a_7\geq 7$ and $b_{1} > a_{7}^{17}n^{10}$. Then Proposition~\ref{prop:antigap} implies that $b_2<b_1^{120}$. In particular, 
$$
a_0 a_{m+7}^{d-m-7}\leq b_2<b_1^{120}<a_0^{120} a_{m+7}^{120m}.
$$
It follows that
$$
d-m-7 \leq 120m+ 120 \frac{\log a_0}{\log a_{m+7}}\leq 120m+120 \frac{\log a_0}{\log a_7}.
$$
We conclude that \[d\ll 1+m+\frac{\log a_0}{\log a_7}\ll 1+\frac{\log a_0}{\log a_7}+\frac{\log \max \{n^{10}/a_0,1\}}{\log a_7}.\qedhere \]
\end{proof}

\section{Bounds on bipartite Diophantine tuples}\label{sec:main1}

In this section, we prove Theorem~\ref{thm1} and Theorem~\ref{thm:ABC}.

\subsection{Proof of Theorem~\ref{thm1}}
Let $\delta=\ep/2$. Let $B=\{b_1, b_2,\ldots, b_m\}$. We may assume that $m\geq 7$, for otherwise we are done. 

First, we consider the case that $k\geq 4$. Partition $B$ into the following 3 subsets:
$$
B_1=B \cap [0, 2|n|^{\frac{k}{k-3}+\delta}], B_2=B \cap (2|n|^{\frac{k}{k-3}+\delta}, 2|n|^{17}], B_3=B \cap (2|n|^{17},\infty).
$$
Since $a_2<b_1$, we have $|B_3|\leq 6$ by Proposition~\ref{prop:a1a2}. Using the gap principle in Corollary~\ref{cor:gap}, it is easy to verify that $|B_2|\ll_{\delta,k} 1$. By Proposition~\ref{prop:sieve2} applied to $M=N=2|n|^{\frac{k}{k-3}+\delta}$, we have $$|B_1|\ll_{\delta,k} |n|^{\frac{\phi(k)(\frac{k}{k-3}+\delta)}{k^2}+\delta}\ll_{\delta,k} |n|^{\frac{\phi(k)}{(k-3)k}+\ep}.$$
Thus, $|B|=|B_1|+|B_2|+|B_3| \ll_{k,\ep} |n|^{\frac{\phi(k)}{(k-3)k}+\ep}$, as required.

Next, we use a similar argument to analyze the case that $k=3$. Partition $B$ into the following 2 subsets:
$$
B_1=B \cap [0, 2|n|^{17}], \quad B_2=B \cap (2|n|^{17},\infty).
$$
Since $a_3<b_1$, we have $|B_2|\leq 6$ by Proposition~\ref{prop:a1a2}. By Proposition~\ref{prop:sieve2} applied to $M=N=2|n|^{17}$, we have $$|B|\ll |B_1|\ll_{\ep} |n|^{\frac{\phi(3) \cdot 17}{3^3}+\ep}\ll_{\ep} |n|^{\frac{34}{27}+\ep},$$
as required.

\subsection{Results conditional on the ABC conjecture: proof of Theorem~\ref{thm:ABC}} 
For a nonzero integer $N$, its \emph{radical} is  $\rad(N)=\prod_{p \mid N}p$. The following is the ABC conjecture.

\begin{conj}[ABC conjecture]\label{conj:ABC}
For each $\ep>0$, there is a constant $K_\ep$, such that whenever $a,b,c$ are nonzero integers with $\gcd(a,b,c)=1$ and $a+b=c$, we have 
$$
\max \{|a|,|b|,|c|\} \leq K_{\ep} \rad (abc)^{1+\ep}.
$$    
\end{conj}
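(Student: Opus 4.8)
The statement displayed as Conjecture~\ref{conj:ABC} is the \emph{ABC conjecture} of Masser and Oesterl\'e; it is one of the central open problems in Diophantine number theory, and it appears here as a hypothesis rather than as a result to be derived. Consequently there is no proof to propose: in this paper the conjecture is assumed, and it is invoked only as an input to Theorem~\ref{thm:ABC} and its consequences. What follows is therefore an honest account of how one might hope to attack it and why no unconditional argument is known.

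The natural first move would be to reformulate the inequality into a more tractable shape: passing between the $\max\{|a|,|b|,|c|\}$ version above, the version $c \le K_\ep \rad(abc)^{1+\ep}$ with $c$ the largest term, the (essentially equivalent) symmetric bound $|abc| \ll_\ep \rad(abc)^{3+\ep}$, and the arithmetic-geometric reformulation via Szpiro's conjecture relating the minimal discriminant and the conductor of the Frey elliptic curve $y^2 = x(x-a)(x+b)$. One might then try to import the \emph{polynomial} ABC theorem (Mason--Stothers), which does hold and has an elementary Wronskian proof over $\C(t)$, and look for an analogue of the derivation $d/dt$ on $\Z$; this strategy fails because no such derivation exists on $\Z$ and the archimedean place does not behave like a point on a curve. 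A second route would be to feed the equation into the theory of linear forms in logarithms, which is how the best known unconditional partial results are obtained.

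The genuine obstacle is that every available technique falls exponentially short of the conjectured power saving. Baker's method, as developed by Stewart and Yu, yields only $\log \max\{|a|,|b|,|c|\} \ll \rad(abc)^{1/3+o(1)}$; reformulations through $L$-functions (Granville--Stark, and the link to Siegel zeros of imaginary quadratic Dirichlet $L$-functions) remain themselves conditional; and while Mochizuki's inter-universal Teichm\"uller theory has been advanced as a proof, it is not accepted by the broader community and we do not rely on it here. For the purposes of this paper, Conjecture~\ref{conj:ABC} is taken as given, and Theorem~\ref{thm:ABC}, Theorem~\ref{thm1}-adjacent corollaries that cite it, and all statements labeled ``conditional on the ABC conjecture'' should be read accordingly. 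The main difficulty, in short, is the entire conjecture.
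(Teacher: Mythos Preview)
Your assessment is correct and matches the paper exactly: Conjecture~\ref{conj:ABC} is stated without proof as the ABC conjecture, an open problem used solely as a hypothesis for Theorem~\ref{thm:ABC} (via its folklore consequence, Conjecture~\ref{conj:ABC2}). There is nothing further to compare.
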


For our purpose, the following folklore consequence of the ABC conjecture is more convenient to apply.
\begin{conj}\label{conj:ABC2}
For each $\ep>0$, there exists a constant $K_\ep$, such that whenever $a,b,c$ are nonzero integers with $a+b=c$, we have 
$$
\max \{|a|,|b|\} \leq K_{\ep} \rad (ab)^{1+\ep} \cdot |c|^{1+\ep}.
$$    
\end{conj}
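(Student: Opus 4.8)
The plan is to deduce Conjecture~\ref{conj:ABC2} from the ABC conjecture (Conjecture~\ref{conj:ABC}) by the standard device of dividing out the greatest common divisor. Suppose $a,b,c$ are nonzero integers with $a+b=c$, and put $g=\gcd(a,b)$; since $c=a+b$ we also have $g\mid c$, so we may write $a=ga'$, $b=gb'$, $c=gc'$ with $a',b',c'$ nonzero integers satisfying $a'+b'=c'$ and $\gcd(a',b')=1$. A common prime factor of $a'$ and $c'$ would divide $b'=c'-a'$, so in fact $a',b',c'$ are pairwise coprime; in particular $\gcd(a',b',c')=1$, and Conjecture~\ref{conj:ABC} applies to the triple $(a',b',c')$, giving for each $\ep>0$ a constant $K_\ep$ with
$$\max\{|a'|,|b'|\}\leq \max\{|a'|,|b'|,|c'|\}\leq K_\ep\,\rad(a'b'c')^{1+\ep}.$$

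Next I would bound the radical. By pairwise coprimality, $\rad(a'b'c')=\rad(a')\rad(b')\rad(c')$. Since $a'\mid a$ and $b'\mid b$ and $\gcd(a',b')=1$, the primes dividing $a'$ and the primes dividing $b'$ are disjoint subsets of the set of primes dividing $ab$, so $\rad(a')\rad(b')\leq\rad(ab)$; also $\rad(c')\leq |c'|$ trivially. Hence $\rad(a'b'c')\leq\rad(ab)\,|c'|$, and therefore
$$\max\{|a'|,|b'|\}\leq K_\ep\,\bigl(\rad(ab)\,|c'|\bigr)^{1+\ep}=K_\ep\,\rad(ab)^{1+\ep}\,|c'|^{1+\ep}.$$

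Finally I would translate back to $a,b,c$. Since $\max\{|a|,|b|\}=g\max\{|a'|,|b'|\}$ and $g|c'|=|c|$, multiplying the previous display by $g$ yields
$$\max\{|a|,|b|\}\leq K_\ep\,\rad(ab)^{1+\ep}\cdot g\,|c'|^{1+\ep}=K_\ep\,\rad(ab)^{1+\ep}\cdot|c|\cdot|c'|^{\ep}\leq K_\ep\,\rad(ab)^{1+\ep}\,|c|^{1+\ep},$$
using $|c'|\leq|c|$ in the last step; this is exactly the claimed inequality, with the same constant $K_\ep$. There is no genuine obstacle in this argument — the only point that requires a little care is the bookkeeping of the exponent of $|c|$: a careless application would pull out a stray factor $g\leq|c|$ and produce $|c|^{2+\ep}$, whereas absorbing $g$ into $|c'|^{1+\ep}$ via the identity $g|c'|=|c|$ recovers the sharp exponent $1+\ep$.
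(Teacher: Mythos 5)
Your derivation is correct. Note that the paper does not actually prove this statement: it records it as a ``folklore consequence'' of the ABC conjecture and applies it directly, so your argument supplies the omitted (standard) deduction. The steps all check out — dividing by $g=\gcd(a,b)$ gives a pairwise coprime triple $a'+b'=c'$ to which Conjecture~\ref{conj:ABC} applies, the radical estimate $\rad(a'b'c')\leq \rad(ab)\,|c'|$ is valid since $a'\mid a$, $b'\mid b$ with $\gcd(a',b')=1$ and $\rad(c')\leq |c'|$, and your bookkeeping $g\,|c'|^{1+\ep}=|c|\,|c'|^{\ep}\leq |c|^{1+\ep}$ correctly lands on the exponent $1+\ep$ rather than the lossy $|c|^{2+\ep}$.
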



Next, we use Conjecture~\ref{conj:ABC2} to deduce some bounds on bipartite Diophantine tuples.

\begin{lem}\label{lem:b}
Assume Conjecture~\ref{conj:ABC2}. Let $n$ be a nonzero integer and $k\geq 3$. Assume that $\alpha_1,\alpha_2,\beta$ are positive integers such that $\alpha_1<\alpha_2$, and both $\alpha_1\beta+n$ and $\alpha_2\beta+n$ are $k$-th powers. Then for each real numbers $L_1>\frac{k}{k-2}$ and $L_2>\frac{2k+1}{k-2}$, we have $\beta<C_{k,L_1,L_2}|n|^{L_1}\alpha_2^{L_2}$, where $C_{k,L_1, L_2}$ is a constant depending only on $k$, $L_1$ and $L_2$.
\end{lem}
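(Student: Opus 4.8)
The plan is to eliminate $\beta$, reduce to a single relation among $k$-th powers and $n$, and apply the ABC consequence (Conjecture~\ref{conj:ABC2}) to it. First I would dispose of degenerate cases: if $\alpha_1\beta+n\le 0$, then $\alpha_1\beta<|n|$, so $\beta<|n|\le C_{k,L_1,L_2}|n|^{L_1}\alpha_2^{L_2}$ as soon as $C_{k,L_1,L_2}\ge 1$ (using $L_1>1$ and $\alpha_2\ge 1$), and we are done. So assume $\alpha_1\beta+n\ge 1$; then also $\alpha_2\beta+n>\alpha_1\beta+n\ge 1$, and we may write $\alpha_1\beta+n=u^k$ and $\alpha_2\beta+n=v^k$ for integers $u,v\ge 1$, with $u<v$ since $v^k-u^k=(\alpha_2-\alpha_1)\beta>0$.

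Eliminating $\beta$ yields
\[
\alpha_2 u^k-\alpha_1 v^k=(\alpha_2-\alpha_1)n,
\]
in which all three terms are nonzero (as $\alpha_1,\alpha_2,u,v\ge 1$ and $n\ne 0$). Here it is convenient that Conjecture~\ref{conj:ABC2} requires no coprimality hypothesis, so I can apply it directly to this relation. Using $\rad(\alpha_1\alpha_2 u^k v^k)=\rad(\alpha_1\alpha_2 uv)\le \alpha_1\alpha_2 uv$, we obtain, for each $\ep>0$,
\[
\max\{\alpha_2 u^k,\alpha_1 v^k\}\le K_\ep (\alpha_1\alpha_2 uv)^{1+\ep}\big((\alpha_2-\alpha_1)|n|\big)^{1+\ep}.
\]
Since $\alpha_1 v^k$ is at most the left-hand side, and bounding $uv\le v^2$, $\alpha_1\le\alpha_2$, and $\alpha_2-\alpha_1<\alpha_2$ on the right, this becomes
\[
\alpha_1 v^k\le K_\ep\,\alpha_1^{1+\ep}\alpha_2^{2+2\ep}v^{2+2\ep}|n|^{1+\ep}.
\]
The key step is to divide through by $\alpha_1$: the factor $\alpha_1^{1+\ep}$ collapses to $\alpha_1^{\ep}\le\alpha_2^{\ep}$, giving $v^k\le K_\ep\,\alpha_2^{2+3\ep}v^{2+2\ep}|n|^{1+\ep}$. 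For $\ep$ small enough that $k-2-2\ep>0$ (possible precisely because $k\ge 3$), rearranging gives
\[
v\le K_\ep^{1/(k-2-2\ep)}\,\alpha_2^{(2+3\ep)/(k-2-2\ep)}\,|n|^{(1+\ep)/(k-2-2\ep)}.
\]

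Finally, $\beta=(v^k-n)/\alpha_2\le v^k+|n|$, so
\[
\beta\le K_\ep^{k/(k-2-2\ep)}\,\alpha_2^{k(2+3\ep)/(k-2-2\ep)}\,|n|^{k(1+\ep)/(k-2-2\ep)}+|n|.
\]
As $\ep\to 0^+$, the two exponents on the right tend to $\tfrac{2k}{k-2}$ and $\tfrac{k}{k-2}$; since $L_2>\tfrac{2k+1}{k-2}>\tfrac{2k}{k-2}$ and $L_1>\tfrac{k}{k-2}$, I would fix $\ep=\ep(k,L_1,L_2)>0$ small enough that these exponents are at most $L_2$ and $L_1$ respectively. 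Absorbing the $+|n|$ term (using $\alpha_2,|n|\ge 1$) then yields $\beta\le C_{k,L_1,L_2}|n|^{L_1}\alpha_2^{L_2}$ with $C_{k,L_1,L_2}:=K_\ep^{k/(k-2-2\ep)}+1$, which depends only on $k,L_1,L_2$. The only real subtlety is the $\ep$-bookkeeping, and in particular the division-by-$\alpha_1$ step: skipping it would leave an extra power of $\alpha_2$ (exponent $\tfrac{3k}{k-2}$), which would not suffice; one could also note the argument in fact goes through for any $L_2>\tfrac{2k}{k-2}$.
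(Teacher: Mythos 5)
Your proof is correct and follows essentially the same route as the paper: eliminate $\beta$ to get $\alpha_2 u^k-\alpha_1 v^k=(\alpha_2-\alpha_1)n$, apply Conjecture~\ref{conj:ABC2} with the trivial radical bound, and do the $\ep$-bookkeeping. The only (harmless) difference is that you solve for the larger $k$-th root $v$ directly via $uv\le v^2$ and then recover $\beta$ from $v^k=\alpha_2\beta+n$, whereas the paper reduces to the case $\beta\ge 2|n|$ and bounds $xy$ in terms of $\beta$; your variant even gives the slightly sharper exponent $\tfrac{2k}{k-2}$ on $\alpha_2$, as you note.
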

\begin{proof}
Let $\ep \in (0,\frac{1}{2})$. By assumption, we can find positive integers $x, y$ such that
$$
\alpha_1 \beta+n=x^k, \quad \alpha_2 \beta+n=y^k,
$$
It follows that
\[
\alpha_2 x^k-\alpha_1 y^k=n\left(\alpha_2-\alpha_1\right).    
\] 
Thus, Conjecture~\ref{conj:ABC2} implies that
\begin{equation}\label{eq:eq1}
\alpha_2x^k \leq K_{\ep} \rad(\alpha_1\alpha_2x^ky^k)^{1+\ep} (|n|(\alpha_2-\alpha_1))^{1+\ep} \leq K_{\ep} (\alpha_1\alpha_2xy)^{1+\ep} (|n|\alpha_2)^{1+\ep}
\end{equation}
We may assume that $\beta\geq 2|n|$, for otherwise we are done. Then we have $x^k=\alpha_1\beta+n\in [\alpha_1\beta/2, 2\alpha_1\beta]$ and $y^k=\alpha_2\beta+n\leq 2\alpha_2\beta$. Inequality~\eqref{eq:eq1} thus implies that
$$
\frac{\alpha_1 \beta}{2} \leq x^k \leq K_{\ep} (\alpha_1\alpha_2xy)^{1+\ep} |n|^{1+\ep} \alpha_2^{\ep} < K_{\ep} \alpha_2^{2+3\ep}|n|^{1+\ep} (xy)^{1+\ep} < K_{\ep} \alpha_2^{2+3\ep}|n|^{1+\ep} (4\alpha_1\alpha_2 \beta^2)^{\frac{1+\ep}{k}}.
$$
Since $1+\ep<k$, it follows that
$$
 \frac{\beta}{2} < K_{\ep} \alpha_2^{2+3\ep}|n|^{1+\ep} (4\alpha_2 \beta^2)^{\frac{1+\ep}{k}}.
$$
that is, 
$$
\beta^{1-\frac{2(1+\ep)}{k}}< 2\cdot 4^{\frac{1+\ep}{k}}K_{\ep}|n|^{1+\ep} \alpha_2^{2+3\ep+\frac{1+\ep}{k}},
$$
equivalently,
$$
\beta<(2\cdot 4^{\frac{1+\ep}{k}}K_{\ep})^{\frac{k}{k-2(1+\ep)}}|n|^{\frac{k(1+\ep)}{k-2(1+\ep)}}\alpha_2^{\frac{(2+3\ep)k+1+\ep}{k-2(1+\ep)}}.
$$
Note that $\frac{k(1+\ep)}{k-2(1+\ep)} \to \frac{k}{k-2}$ and $\frac{(2+3\ep)k+1+\ep}{k-2(1+\ep)} \to \frac{2k+1}{k-2}$ as $\ep \to 0$. This completes the proof of the lemma.
\end{proof}

\begin{lem}\label{lem:b1}
Assume Conjecture~\ref{conj:ABC2}. Let $n$ be a nonzero integer and $k\geq 3$. Assume that $\alpha_1,\alpha_2,\beta_1, \beta_2$ are positive integers with $\alpha_1<\alpha_2$ and $\beta_1<\beta_2$,
such that $\alpha_i\beta_j+n$ is a $k$-th powers for $1\leq i,j\leq 2$. Then for all real numbers $L_3>\frac{k^2}{(k-2)^2}$ and $L_4>\frac{6k-2}{(k-2)^2}$, we have $\beta_1<D_{k,L_3, L_4}|n|^{L_3}\alpha_2^{L_4}$, where $D_{k,L_3, L_4}$ is a constant depending only on $k, L_3$, and $L_4$.
\end{lem}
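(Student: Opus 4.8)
The plan is to derive the bound on $\beta_1$ from a single application of Conjecture~\ref{conj:ABC2} to a carefully chosen identity relating all four of the quantities $\alpha_i\beta_j+n$, and then to eliminate the auxiliary quantity $\beta_2$ using Lemma~\ref{lem:b}. First I would dispose of the trivial range: if $\beta_1<2|n|$ there is nothing to prove, so assume $\beta_1\geq 2|n|$, which also forces $\beta_2>\beta_1\geq 2|n|$. Write $\alpha_i\beta_j+n=\xi_{ij}^k$ with $\xi_{ij}\in\N$ for $1\leq i,j\leq 2$. The crucial observation is the identity
\[
(\xi_{11}\xi_{22})^k - (\xi_{12}\xi_{21})^k = (\alpha_1\beta_1+n)(\alpha_2\beta_2+n) - (\alpha_1\beta_2+n)(\alpha_2\beta_1+n) = n(\alpha_2-\alpha_1)(\beta_2-\beta_1),
\]
which exhibits a difference of two perfect $k$-th powers (each of size $\asymp \alpha_1\alpha_2\beta_1\beta_2$) equal to the much smaller quantity $n(\alpha_2-\alpha_1)(\beta_2-\beta_1)$. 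The point of using this ``cross'' identity, rather than the single-variable elimination used in Lemma~\ref{lem:b}, is that both sides on the left are genuine $k$-th powers, so their radicals are at most their $k$-th roots; this is exactly what makes the ABC estimate efficient here.

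Next I would apply Conjecture~\ref{conj:ABC2} with $a=(\xi_{11}\xi_{22})^k$, $b=-(\xi_{12}\xi_{21})^k$, and $c=n(\alpha_2-\alpha_1)(\beta_2-\beta_1)$ (all nonzero). Bounding $\rad\big((\xi_{11}\xi_{12}\xi_{21}\xi_{22})^k\big)\leq \xi_{11}\xi_{12}\xi_{21}\xi_{22}$ and $|c|\leq |n|\alpha_2\beta_2$ gives
\[
(\xi_{11}\xi_{22})^k \leq K_\ep\,(\xi_{11}\xi_{12}\xi_{21}\xi_{22})^{1+\ep}\,(|n|\alpha_2\beta_2)^{1+\ep}.
\]
Since $\beta_1,\beta_2\geq 2|n|$, each $\xi_{ij}^k=\alpha_i\beta_j+n$ lies in $[\alpha_i\beta_j/2,\,2\alpha_i\beta_j]$, so the left-hand side is $\geq \alpha_1\alpha_2\beta_1\beta_2/4$ while $\xi_{11}\xi_{12}\xi_{21}\xi_{22}\leq (16\,\alpha_1^2\alpha_2^2\beta_1^2\beta_2^2)^{1/k}$. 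Substituting, putting $\kappa:=1-\tfrac{2(1+\ep)}{k}$ (positive for $k\geq 3$ and $\ep$ small, say $\ep\in(0,1/2)$), and using $\alpha_1\geq 1$ to discard $\alpha_1$ on the left, I would rearrange to
\[
\beta_1 \ll_{k,\ep} |n|^{(1+\ep)/\kappa}\,\alpha_2^{(1+\ep)/\kappa - 1}\,\beta_2^{(1+\ep)/\kappa - 1},
\]
where one checks $(1+\ep)/\kappa>1$ always (equivalently $\ep k+2\ep+2>0$), so the exponent of $\beta_2$ is positive.

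Finally, Lemma~\ref{lem:b} applied to $\alpha_1<\alpha_2$ and $\beta=\beta_2$ gives $\beta_2<C_{k,L_1,L_2}\,|n|^{L_1}\alpha_2^{L_2}$ for any $L_1>\tfrac{k}{k-2}$ and $L_2>\tfrac{2k+1}{k-2}$; feeding this into the previous display yields
\[
\beta_1 \ll_{k,\ep,L_1,L_2} |n|^{\,(1+\ep)/\kappa\,+\,L_1((1+\ep)/\kappa-1)}\;\alpha_2^{\,(1+\ep)/\kappa - 1\,+\,L_2((1+\ep)/\kappa-1)}.
\]
As $\ep\to 0$, $L_1\to\tfrac{k}{k-2}$, $L_2\to\tfrac{2k+1}{k-2}$, one has $(1+\ep)/\kappa\to\tfrac{k}{k-2}$ and $(1+\ep)/\kappa-1\to\tfrac{2}{k-2}$, so the exponent of $|n|$ tends to $\tfrac{k}{k-2}\big(1+\tfrac{2}{k-2}\big)=\tfrac{k^2}{(k-2)^2}$ and the exponent of $\alpha_2$ tends to $\tfrac{2}{k-2}\big(1+\tfrac{2k+1}{k-2}\big)=\tfrac{6k-2}{(k-2)^2}$. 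Hence, given $L_3>\tfrac{k^2}{(k-2)^2}$ and $L_4>\tfrac{6k-2}{(k-2)^2}$, I would first fix $L_1,L_2$ slightly above their thresholds and then choose $\ep>0$ small enough so that both exponents drop below $L_3$ and $L_4$ respectively, producing the desired bound with $D_{k,L_3,L_4}$ depending only on $k,L_3,L_4$ (through $K_\ep$ and $C_{k,L_1,L_2}$). I expect no serious obstacle beyond picking the right identity in the first step; the remaining work — the positivity of $\kappa$ and of $(1+\ep)/\kappa-1$, which keep the rearrangement and the substitution of the $\beta_2$-bound pointing in the correct direction, and the routine verification that the limiting exponents match the claimed thresholds — is bookkeeping.
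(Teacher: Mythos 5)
Your proposal is correct and matches the paper's own proof essentially step for step: the same cross identity $x_1^kx_4^k-x_2^kx_3^k=n(\alpha_2-\alpha_1)(\beta_2-\beta_1)$, the same application of Conjecture~\ref{conj:ABC2} under the reduction $\beta_1\geq 2|n|$, the same elimination of $\beta_2$ via Lemma~\ref{lem:b}, and the same limiting computation of the exponents as $\ep\to 0$. No gaps to report.
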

\begin{proof}
Let $\ep \in (0,\frac{1}{2})$. By assumption, we can find positive integers $x_1,x_2,x_3,x_4$ such that
$$
\alpha_1 \beta_1=x_1^k-n, \quad \alpha_1 \beta_2=x_2^k-n, \quad \alpha_2\beta_1=x_3^k-n, \quad \alpha_2\beta_2=x_4^k-n.
$$
It follows that
\[
(x_1^k-n)(x_4^k-n)=\alpha_1\alpha_2\beta_1\beta_2=(x_2^k-n)(x_3^k-n),
\] 
equivalently,
$$
x_1^k x_4^k -x_2^k x_3^k=n(x_1^k+x_4^k-x_2^k-x_3^k)=n(\alpha_1\beta_1+\alpha_2\beta_2-\alpha_1\beta_2-\alpha_2\beta_1)=n(\alpha_2-\alpha_1)(\beta_2-\beta_1).
$$
Thus, Conjecture~\ref{conj:ABC2} implies that
\begin{equation}\label{eq:eq2}
x_1^k x_4^k <K_{\ep} \rad(x_1^kx_2^kx_3^kx_4^k)^{1+\ep} (|n|\alpha_2\beta_2)^{1+\ep} \leq K_{\ep} (x_1x_2x_3x_4)^{1+\ep} (|n|\alpha_2\beta_2)^{1+\ep}.
\end{equation}
We may assume that $\beta_1\geq 2|n|$, for otherwise we are done. Then we have $$x_1^k \in [\alpha_1\beta_1/2, 2\alpha_1\beta_1], \quad x_2^k\leq 2\alpha_1\beta_2, \quad x_3^k \leq 2\alpha_2\beta_1, \quad x_4^k \in [\alpha_2\beta_2/2, 2\alpha_2\beta_2].$$ Inequality~\eqref{eq:eq2} then implies that
$$
\frac{\alpha_1\beta_1\alpha_2\beta_2}{4} \leq x_1^k x_4^k \leq K_{\ep} (x_1x_2x_3x_4)^{1+\ep} (|n|\alpha_2\beta_2)^{1+\ep} \leq K_{\ep} (4\alpha_1\alpha_2\beta_1\beta_2)^{\frac{2(1+\ep)}{k}} (|n|\alpha_2\beta_2)^{1+\ep},
$$
which implies that
\begin{equation}\label{eq:beta1}
\beta_1\leq 4K_{\ep} |n|^{1+\ep} (4\beta_1)^{\frac{2(1+\ep)}{k}} (\alpha_2\beta_2)^{\ep+\frac{2(1+\ep)}{k}}.
\end{equation}
Let $L_1$ and $L_2$ be real numbers with $L_1>\frac{k}{k-2}$ and $L_2>\frac{2k+1}{k-2}$. It follows from Lemma~\ref{lem:b} that  $\beta_2\leq C_{k,L_1, L_2} |n|^{L_1}\alpha_2^{L_2}$. Thus, inequality~\eqref{eq:beta1} implies that
$$
\beta_1\leq 4^{1+\frac{2(1+\ep)}{k}}K_{\ep} C_{k,L_1,L_2}^{\ep+\frac{2(1+\ep)}{k}} |n|^{1+\ep+L_1(\ep+\frac{2(1+\ep)}{k})} \beta_1^{\frac{2(1+\ep)}{k}} \alpha_2^{(\ep+\frac{2(1+\ep)}{k})(L_2+1)},
$$
equivalently,
$$
\beta_1\leq (4^{1+\frac{2(1+\ep)}{k}}K_{\ep} C_{k,L_1,L_2}^{\ep+\frac{2(1+\ep)}{k}})^{\frac{k}{k-2(1+\ep)}} |n|^{\frac{k(1+\ep)+L_1(k\ep+2(1+\ep))}{k-2(1+\ep)}} \alpha_2^{\frac{(k\ep+2(1+\ep))(L_2+1)}{k-2(1+\ep)}}.
$$
Note that as $\ep \to 0$, $L_1 \to \frac{k}{k-2}$, and $L_2 \to \frac{2k+1}{k-2}$, we have $$\frac{k(1+\ep)+L_1(k\ep+2(1+\ep))}{k-2(1+\ep)} \to \frac{k^2}{(k-2)^2}, \quad \frac{(k\ep+2(1+\ep))(L_2+1)}{k-2(1+\ep)} \to \frac{6k-2}{(k-2)^2}.$$
This completes the proof of the lemma.
\end{proof}

Now we are ready to present the proof of Theorem~\ref{thm:ABC}.
\begin{proof}[Proof of Theorem~\ref{thm:ABC}]
Partition $B$ into two subsets:
$$
B_1=B \cap (0, 2|n|^{\theta_k}), \quad B_2=B \cap [2|n|^{\theta_k},+\infty],
$$
where
$$
\theta_3=48, \quad \theta_4=12, \quad \theta_5=\frac{16}{5}, \quad \text{and } \quad \theta_k=\frac{k^2-k}{k^2-6k+3} \text{ for } k\geq 6.
$$

Let $\ep \in (0,1)$. Choose $\delta>0$ sufficiently small, and set 
$$
L_1=\frac{k}{k-2}+\delta,\quad L_2=\frac{2k+1}{k-2}+\delta,\quad L_3=\frac{k^2}{(k-2)^2}+\delta,\quad L_4=\frac{6k-2}{(k-2)^2}+\delta,
$$
such that:
\begin{enumerate}
    \item If $k\geq 6$, we have
    \begin{equation}\label{eq:LLL}
k-1-L_2>1, \quad (k-1-L_2)^4\geq 9>7\geq L_2L_4, \quad \frac{k+L_1}{k-2-\ep-L_2}\geq \theta_k.
\end{equation}
This is possible since when $k\geq 6$, we have
$$
k-1-\frac{2(k+1)}{k-2}>1, \quad \bigg(k-1-\frac{2(k+1)}{k-2}\bigg)^4>9, \quad \frac{(2k+1)(6k-2)}{(k-2)^{3}}<7
$$
and 
$$
\frac{k+\frac{k}{k-2}}{k-2-\frac{2k+1}{k-2}}=\frac{k^2-k}{k^2-6k+3}=\theta_k.
$$
   \item If $k\in \{4,5\}$, we have
    \begin{equation}\label{eq:LLL45}
k-1-\frac{L_2}{k-1}>1, \quad \bigg(k-1-\frac{L_2}{k-1}\bigg)^{12}\geq 30>25\geq L_2L_4, \quad \frac{k+L_1}{k-2-\ep-\frac{L_2}{k-1}}\geq \theta_k.
\end{equation}
This is possible since when $k\in \{4,5\}$, we have
$$
k-1-\frac{2(k+1)}{(k-1)(k-2)}>1, \quad \bigg(k-1-\frac{2(k+1)}{(k-1)(k-2)}\bigg)^{12}>30, \quad \frac{(2k+1)(6k-2)}{(k-2)^{3}}<25.
$$
   \item If $k=3$, we have
    \begin{equation}\label{eq:LLL3}
\bigg(k-1-\frac{L_2}{(k-1)^3}\bigg)^{41}\geq 125>113>L_2L_4, \quad \frac{k+L_1}{k-2-\ep-\frac{L_2}{(k-1)^3}}\geq \theta_3.
\end{equation}
\end{enumerate}
Note that the above choices of $L_1,L_2,L_3,L_4$ only depend on $k$ and $\ep$.

We claim that $|B_1|\ll_{k,\ep} |n|^{\frac{\theta_k \phi(k)}{k^{|A|}}+\ep}$ and $|B_2|\ll_{k,\ep} 1$, and thus the desired upper bound on $|B|$ follows. Indeed, by Lemma~\ref{lem:b}, we can apply Proposition~\ref{prop:sieve2} with $$N=2|n|^{\theta_k}, \quad \text{and} \quad M=C_{k,L_1,L_2}|n|^{L_1}(2|n|^{\theta_k})^{L_2}$$ to deduce the desired bound on $|B_1|$.

It remains to bound $|B_2|$. Let $A=\{a_1,a_2,\ldots, a_t\}$ with $a_1<a_2<\cdots<a_t$, and $B_2=\{b_1,b_2,\ldots, b_m\}$ with $b_1<b_2<\ldots<b_m$. We may assume that $m\geq 100$, for otherwise we are done. By Lemma~\ref{lem:b} applied to $\alpha_1=b_1$, $\alpha_2=b_2$, and $\beta=a_t$, we deduce that
\begin{equation}\label{eq:a2}
a_t\ll_{k,\ep} |n|^{L_1} b_2^{L_2}.    
\end{equation}
By Lemma~\ref{lem:b1} applied to $\alpha_1=a_1$, $\alpha_2=a_2$, $\beta_1=b_{m-1}$, and $\beta_2=b_m$, we deduce that $b_{m-1}\ll_k|n|^{L_3} a_2^{L_4}$. It then follows from inequality~\eqref{eq:a2} that
\begin{equation}\label{eq:bm}
b_{m-1}\ll_{k,\ep} |n|^{L_3} a_2^{L_4} \ll_{k,\ep}|n|^{L_3+L_1L_4} b_2^{L_2L_4}.
\end{equation}
On the other hand, since $a_2b_2\geq 2|n|$, for each $2\leq i \leq m-2$, by Lemma~\ref{gap_principle} applied to $x=a_1, y=a_2,z=b_i, w=b_{i+1}$, we deduce from inequality~\eqref{eq:a2} that
\begin{equation}\label{eq:gap}
b_{i+1}\geq \frac{k^k}{4^{k-1}|n|^k} \frac{(a_1b_i)^{k-1}}{a_2} \gg_{k} \frac{b_i^{k-1}}{a_2 |n|^k}.
\end{equation}

Next, we consider three cases based on the size of $k$.

(1) $k\geq 6$. Combining inequalities~\eqref{eq:a2} and~\eqref{eq:gap}, we have
\begin{equation}\label{eq:gap6}
b_{i+1}\gg_{k,\ep} \frac{b_i^{k-1-L_2}}{|n|^{k+L_1}}.
\end{equation}
Applying inequality~\eqref{eq:gap6} four times and combining inequality~\eqref{eq:LLL}, we have
$$
b_{m-1}\gg_{k,\ep} \frac{b_{m-5}^{9}}{|n|^L}
$$
for some constant $L$ depending only on $k$ and $\ep$. It then follows from inequalities~\eqref{eq:LLL} and~\eqref{eq:bm} that
$$
b_{m-5}^{9}\ll_{k,\ep} b_2^7 |n|^{L'}.
$$
for some constant $L'$ depending only on $k$ and $\ep$. It follows that $b_{m-5}\ll_{k,\ep} |n|^{L'/2}$. On the other hand, for each $2\leq i \leq m-2$, since $b_i\geq 2|n|^{\frac{k^2-k}{k^2-6k+3}}$, inequalities~\eqref{eq:LLL} and~\eqref{eq:gap6} imply that
\begin{equation}\label{eq:ggg}
b_{i+1}\gg_{k,\ep} \frac{b_i^{k-1-L_2}}{|n|^{k+L_1}} \gg_{k,\ep} b_i^{k-1-L_2-\frac{(k+L_1)(k^2-6k+3)}{k^2-k}}\geq b_i^{k-1-L_2-(k-2-\ep-L_2)}=b_i^{1+\ep}. 
\end{equation}
However, we have $b_1\geq 2|n|$ and $b_{m-5}\ll_{k,\ep} |n|^{L'/2}$; inequality~\eqref{eq:ggg} thus forces $m\ll_{k,\ep} 1$, as required.

(2) $k\in \{4,5\}$. By Lemma~\ref{gap_principle} with $x=a_2$, $y=a_3$, $z=b_2$, and $w=b_3$, we have 
\begin{equation}\label{eq:a_3b_3}
a_3 \geq \frac{a_2^{k-1}}{|n|^k}\frac{b_2^{k-1}}{b_3}.
\end{equation}
We consider the following two cases.

Case 1: $b_2^{k-1}\geq b_3$. In this case inequality~\eqref{eq:a_3b_3} implies that $a_3\geq a_2^{k-1}/|n|^k$. It follows from inequality~\eqref{eq:a2} that $a_2^{k-1}\leq a_3|n|^k \ll_{k,\ep} |n|^{L_1+k} b_2^{L_2}$ and thus $a_2\ll_{k,\ep} |n|^{\frac{L_1+k}{k-1}} b_3^{\frac{L_2}{k-1}}.$

Case 2: $b_3 \geq b_2^{k-1}$. In this case, inequality~\eqref{eq:a2} implies that 
$a_2\ll_{k,\ep} |n|^{L_1}b_2^{L_2} \leq |n|^{L_1}b_3^{\frac{L_2}{k-1}}$. 

Since $\frac{L_1+k}{k-1}<L_1$, in both cases we have 
\begin{equation}\label{eq:a2k45}
a_2\ll_{k,\ep} |n|^{L_1}b_3^{\frac{L_2}{k-1}}. 
\end{equation}
Combining inequalities~\eqref{eq:a2} and~\eqref{eq:gap}, for each $2\leq i \leq m-1$, we have
\begin{equation}\label{eq:gap45}
b_{i+1}\gg_{k,\ep} \frac{b_i^{k-1-\frac{L_2}{k-1}}}{|n|^{k+L_1}}.
\end{equation}
Now we can proceed similarly as in the case of $k\geq 6$, to conclude that $|B_2|\ll_{k,\ep} 1$. More precisely, we apply inequality~\eqref{eq:gap45} for $m-13\leq i \leq m-2$ and combine that with inequalities~\eqref{eq:LLL45} and~\eqref{eq:bm} to show $b_{m-13}\ll_{k,\ep} |n|^L$ for some constant $L$ depending only on $k$ and $\ep$, and show that $b_{i+1}\gg_{k,\ep} b_i^{1+\ep}$ for each $2\leq i \leq m-13$; since $b_1\geq |n|^{\theta_k}$, by comparing these two estimates, we immediately have $m\ll_{k,\ep} 1$.

(3) $k=3$. In this case, we can apply the same argument as in the case $k\in \{4,5\}$ three times to obtain that 
$$
a_2\ll_{\ep} |n|^{L_1} b_5^{\frac{L_2}{(k-1)^3}}.
$$
Now we can use inequality~\eqref{eq:LLL3} and proceed similarly as in the previous cases, to conclude that $|B_2|\ll_{\ep} 1$. 
\end{proof}

\section{Bounds on variants of Diophantine tuples}\label{sec:app}
In this section, we combine all the ingredients we have so far to prove Theorems~\ref{thm:prodl},~\ref{thm:a0=1}, and~\ref{thm:a0small}. 

\begin{proof}[Proof of Theorem~\ref{thm:prodl}]
If $|A|< 4\ell$, then we are done. Next, assume that $|A|\geq 4\ell$. Partition $A$ into two subsets $A_1$ and $A_2$ such that $|A_1|=|A_2|$ or $|A_1|=|A_2|+1$. Then we have $|A_1|,|A_2|\geq 2\ell$. Let $h=\lfloor \frac{\ell}{2}\rfloor$. Let $B_1=A_1^{(h)}$ and $B_2=A_2^{(\ell-h)}$. Observe now that $(a_0B_1,B_2)$ is a bipartite Diophantine tuple with property $BD_k(n)$. By Lemma~\ref{lem:hfold}, we have
$$
|B_1|\geq h(|A_1|-h)+1 \gg h|A_1|\gg \ell|A|.
$$
Similarly, $|B_2|\gg \ell |A|$. On the other hand, by Theorem~\ref{thm:logn}, we have $\min \{|B_1|, |B_2|\}\ll_{k} \log (|n|+1)$. It follows that $|A|\ll_{k} \frac{\log (|n|+1)}{\ell}$, as required.
\end{proof}

\begin{proof}[Proof of Theorem~\ref{thm:a0=1}]
Let $A=\{a_1,a_2,\ldots, a_m\}$ with $a_1<a_2<\cdots<a_m$. Since $1+n$ and $a_2+n$ are both $k$-th powers and $a_2\geq 2$, it follows that
$$
a_2+n\geq \left((1+n)^{1/k}+1\right)^k\geq 1+n+k(1+n)^{(k-1)/k}.
$$
It follows that $a_2\geq kn^{(k-1)/k}$. Next, we consider two cases.

First, consider the case $k=2$. Assume that $|A|\geq 133$. For each $1\leq i \leq 22$, let $b_i=\prod_{j=6i-4}^{6i+1}a_i$. Then we have $n^3<b_1<b_2<\cdots<b_{22}$ and $\{b_1, b_2,\ldots, b_{22}\}$ forms a Diophantine tuple with property $D_2(n)$. However, Dujella \cite{D02} showed that such a Diophantine tuple with property $D_2(n)$ (with all elements at least $n^3$) has size at most $21$, a contradiction. Thus, $|A|\leq 132$.

Finally, consider the case $k\geq 3$. Assume that $|A|\geq 10$. Then by Corollary~\ref{cor:a0X}, we have $\prod_{j=6}^{10} a_j\leq n^{k/(k-2)}\leq n^3$. On the other hand, we have $\prod_{j=6}^{10} a_j\geq (kn^{(k-1)/k})^5>n^3$, a contradiction. Thus, $|A|\leq 9$.
\end{proof}

\begin{proof}[Proof of Theorem~\ref{thm:a0small}]
We may assume that $a_1<a_2<\ldots<a_d$. Since $n+a_0$ and $n+a_2a_0$ are both $k$-th powers and $a_2\geq 2$, we have 
$$
n+a_2a_0\geq ((n+a_0)^{1/k}+1)^k\geq n+a_0+k(n+a_0)^{(k-1)/k}.
$$
It follows that $a_2a_0\geq k(n+a_0)^{(k-1)/k}$ and thus $a_2a_0\geq kn^{(k-1)/k}$. Since $a_0\leq n^{\frac{k-1}{k}-\ep}$, it follows that $a_2\geq n^{\ep}$. Next, we consider two cases.

When $k=2$, we may assume that $d\geq 7$. It follows from Corollary~\ref{cor:loga0} that $$d\ll 1+\frac{\log n}{\log a_7}\leq 1+\frac{\log n}{\log a_2}\ll \frac{1}{\ep},$$ where the implied constant is absolute.

Finally, consider the case $k\geq 3$. We may assume that $d$ is even and $d\geq 10$. Let $X=\prod_{j=d/2+1}^{d} a_j$; then $X>n^{\ep d/2}$. By Corollary~\ref{cor:a0X}, we have $a_0X<n^{k/(k-2)}\leq n^3$. It follows that $d\ll \frac{1}{\ep}$, where the implied constant is absolute.
\end{proof}

\section{Proof of Theorem~\ref{thm:a1}: simultaneous Pell equations}\label{sec:Pell}
In this section, we prove Theorem~\ref{thm:a1} via simultaneous Pell equations. 

Let $a$ be a positive integer and $u$ be a nonzero integer. Recall that every integral solution to the (generalized) Pell equation $x^{2} - az^{2} = u$ takes the form $\alpha^{k}\mu$, where $\alpha$ is the fundamental solution to
\begin{equation*}
    x^{2} - az^{2} = 1
\end{equation*}
and $\mu$ is one of the base solutions to 
\begin{equation*}
    x^{2} - az^{2} = u .
\end{equation*}
Here, a base solution means a minimal positive solution corresponding to an integer $l$ with $l^{2} \equiv a \pmod{u}$, where $0 \leq l < \abs{u}$. It is well-known that there are at most $2^{\omega(u)}$ different base solutions \cite[Section 6]{B98}, where $\omega(u)$ is the number of distinct prime factors of $u$.

We have the following gap principle for solutions to simultaneous Pell equations by Bennett \cite{B98}.
\begin{lem}[{\cite[Lemma 6.1]{B98}}]\label{lem:Bennett_thm_in_gap_principle}
Let $a,b$ be positive integers and $u,v$ be nonzero integers such that $av\neq bu$. Let $(x_{i}, y_{i}, z_{i})$ be positive integral solutions to
\begin{align*}
    x^{2} - az^{2} = u, \qquad y^{2}-bz^{2} = v
\end{align*}
for $1\leq i\leq 3$, belonging to a fixed pair of base solutions. If $z_{3} >z_{2} >z_{1} >\max \{ \abs{u}, \abs{v} \}^{3}$, then $z_{3} >z_{1}^{3}$. 
\end{lem}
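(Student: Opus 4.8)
The plan is to exploit the fact that, within a fixed base class, the solutions of a single Pell equation are governed by powers of a fundamental unit, and to play the two equations against each other through the common coordinate $z$; the hypothesis $av\neq bu$ is precisely what keeps this interaction non‑degenerate.

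\textbf{Setup.} We may assume $a$ and $b$ are nonsquares: if $a=c^{2}$ then $X^{2}-aZ^{2}=(X-cZ)(X+cZ)=u$, so $X+cZ\le|u|$ and $z\le|u|\le M$, and the hypothesis $z_{1}>M^{3}$ is never met. Let $\rho=\varepsilon_{a}=p+q\sqrt a$ be the fundamental solution of $X^{2}-aY^{2}=1$ and $\rho'=\varepsilon_{b}$ that of $X^{2}-bY^{2}=1$. Since $(x_{i},z_{i})$ lies in a fixed base class of $X^{2}-aZ^{2}=u$, we may write $x_{i}+z_{i}\sqrt a=\mu\rho^{k_{i}}$ for a fixed base solution $\mu$, and likewise $y_{i}+z_{i}\sqrt b=\nu\rho'^{\,l_{i}}$. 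The coordinate $z$ is strictly increasing along such an orbit, so $z_{1}<z_{2}<z_{3}$ forces $k_{1}<k_{2}<k_{3}$ and $l_{1}<l_{2}<l_{3}$.

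\textbf{Approximation.} Fix indices $i<j$, put $g=k_{j}-k_{i}\ge 1$, $m=l_{j}-l_{i}\ge 1$, and write $\rho^{g}=p_{g}+q_{g}\sqrt a$. From $z_{j}=q_{g}x_{i}+p_{g}z_{i}$ together with $\abs{x_{i}-z_{i}\sqrt a}=|u|/(x_{i}+z_{i}\sqrt a)<|u|/(z_{i}\sqrt a)$ — where $z_{i}>M^{3}\ge|u|$ is used to handle the case $u<0$ — one gets $\abs{z_{j}/z_{i}-\rho^{g}}<\rho^{g}|u|/z_{i}^{2}$, and symmetrically $\abs{z_{j}/z_{i}-\rho'^{\,m}}<\rho'^{\,m}|v|/z_{i}^{2}$. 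Since $z_{i}>M^{3}$, these yield $\rho^{g},\rho'^{\,m}\ll z_{j}/z_{i}$ and hence
\[
|\rho^{g}-\rho'^{\,m}|\ \ll\ \frac{z_{j}M}{z_{i}^{3}} .
\]

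\textbf{The crux.} Each of the three points satisfies $bx_{i}^{2}-ay_{i}^{2}=b(az_{i}^{2}+u)-a(bz_{i}^{2}+v)=bu-av\neq 0$. This forces $\rho^{g}\neq\rho'^{\,m}$: equality would, on comparing $z_{j}=q_{g}x_{i}+p_{g}z_{i}$ with the analogous identity coming from the $b$‑equation, force $bx_{i}^{2}=ay_{i}^{2}$, a contradiction (and if $ab$ is not a perfect square this is automatic, since $\rho^{g}$ and $\rho'^{\,m}$ then lie in distinct quadratic fields). Now $\theta:=\rho^{g}-\rho'^{\,m}$ is a nonzero algebraic integer; in $K=\mathbb{Q}(\sqrt a,\sqrt b)$ its four conjugates are $\rho^{\pm g}-\rho'^{\pm m}$, and a short computation gives $N_{K/\mathbb{Q}}(\theta)=\theta^{2}\bigl(1-(\rho^{g}\rho'^{\,m})^{-1}\bigr)^{2}$ (and $N_{\mathbb{Q}(\sqrt a)/\mathbb{Q}}(\theta)=-\theta^{2}(\rho^{g}\rho'^{\,m})^{-1}$ in the degenerate case $\mathbb{Q}(\sqrt a)=\mathbb{Q}(\sqrt b)$). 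Being a nonzero rational integer, this has absolute value $\ge 1$, whence $|\rho^{g}-\rho'^{\,m}|>1$. Combining with the display above, $1<|\rho^{g}-\rho'^{\,m}|\ll z_{j}M/z_{i}^{3}$, i.e.\ $z_{j}\gg z_{i}^{3}/M$ with an absolute constant. Applying this to $(i,j)=(1,2)$ gives $z_{2}\gg z_{1}^{3}/M$, and then to $(i,j)=(2,3)$ — legitimate since $z_{2}>z_{1}>M^{3}$ and $bx_{2}^{2}-ay_{2}^{2}=bu-av\neq 0$ — gives $z_{3}\gg z_{2}^{3}/M\gg z_{1}^{9}/M^{4}$, which, since $z_{1}>M^{3}$, exceeds $z_{1}^{3}$ once the (small) absolute constants are carried through.

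\textbf{Expected main obstacle.} The delicate point is the crux: squeezing $\rho^{g}\neq\rho'^{\,m}$ out of the nondegeneracy $av\neq bu$ via the shared coordinate $z_{j}$, and then converting the norm form into a clean \emph{unconditional} lower bound $|\rho^{g}-\rho'^{\,m}|>1$ that also covers the degenerate field case. A secondary chore is the careful bookkeeping of the absolute constants in the Approximation step and the final doubling argument, so that the conclusion is exactly $z_{3}>z_{1}^{3}$ rather than $z_{3}>z_{1}^{3}/C$.
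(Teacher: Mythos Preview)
The paper does not supply a proof of this lemma; it is quoted as \cite[Lemma~6.1]{B98} and used as a black box. So there is no in-paper argument to compare against, and your proposal should be read as an attempt to reconstruct Bennett's proof.

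Your outline is essentially correct and is in the spirit of Bennett's argument. The norm computation is right: with $s=\rho^{g}$, $t=\rho'^{\,m}$ and $[K:\mathbb{Q}]=4$ one has
\[
N_{K/\mathbb{Q}}(s-t)=(s-t)(s^{-1}-t)(s-t^{-1})(s^{-1}-t^{-1})=\frac{(s-t)^{2}(st-1)^{2}}{(st)^{2}},
\]
and since $0<1-(st)^{-1}<1$, the bound $|N|\ge 1$ indeed forces $|s-t|>1$; the degenerate case $\mathbb{Q}(\sqrt a)=\mathbb{Q}(\sqrt b)$ gives the even stronger $|s-t|\ge\sqrt{st}$. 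Your deduction of $s\ne t$ from $av\ne bu$ via the shared $z$-coordinate is also valid: equality forces $p_{g}=p'_{m}$ and $q_{g}\sqrt a=q'_{m}\sqrt b$, hence $q_{g}x_{i}=q'_{m}y_{i}$ from the two recursions for $z_{j}$, and then $bx_{i}^{2}=ay_{i}^{2}$, contradicting $bx_{i}^{2}-ay_{i}^{2}=bu-av\ne 0$.

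The constant bookkeeping you flag as the main obstacle is not severe. Using $a,b\ge 2$ (both nonsquares) one gets $|z_{j}/z_{i}-\rho^{g}|<\rho^{g}M/(2z_{i}^{2})$ and hence $|\rho^{g}-\rho'^{\,m}|<2Mz_{j}/z_{i}^{3}$, so $z_{j}>z_{i}^{3}/(2M)$. Iterating through $(1,2)$ and $(2,3)$ yields $z_{3}>z_{1}^{9}/(16M^{4})$, and $z_{1}>M^{3}$ gives $z_{1}^{6}>M^{18}\ge 16M^{4}$ once $M\ge 2$; the residual case $M=1$ is handled by $z_{1}\ge 2$ and the slightly sharper constant available there. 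So the exact conclusion $z_{3}>z_{1}^{3}$ does fall out without further ideas.
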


Lemma~\ref{lem:Bennett_thm_in_gap_principle} leads to the following gap principle for bipartite Diophantine tuples. Although this is not needed to prove Theorem~\ref{thm:a1}, we include it here since it might be of independent interest.

\begin{prop}
Suppose that $(A,B)$ with $A = \{ a_{1}, a_{2}, a_{3} \}$ and $B = \{ b_{1}, b_{2}, \dots, b_{m} \}$ forms a bipartite Diophantine tuple with property $BD_2(n)$, where $a_1<a_2<a_3$, $b_1<b_2<\cdots<b_m$, and $n$ is a non-zero integer. 
Let
$$h = 2^{\min \{ \omega(na_{1}(a_{1}-a_{2})), \  \omega(na_{1}(a_{1}-a_{3})) \}}.$$
If $b_{1} > n^{6}a_{1}^{5}a_{3}^{6}$ and $m \geq 2h+1$, then $b_{m} > ( 1 - \frac{1}{3^{5}}) a_{1}^{2}b_{1}^{3}$.
\end{prop}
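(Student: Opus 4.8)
The goal is to show that among the $m$ elements $b_1 < b_2 < \cdots < b_m$ of $B$, the largest one $b_m$ grows at least like a cube of $b_1$, once $b_1$ is sufficiently large relative to $n$, $a_1$, $a_3$, and once $m$ exceeds $2h+1$. The plan is to convert the bipartite structure into a pair of simultaneous Pell equations and apply the gap principle of Lemma~\ref{lem:Bennett_thm_in_gap_principle}, exactly in the spirit of the analysis leading up to Proposition~\ref{prop:antigap}.

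\emph{Step 1: Set up the Pell system.} For each $j$ with $1 \le j \le m$ there are positive integers $u_j, v_j, w_j$ with $a_1 b_j + n = u_j^2$, $a_2 b_j + n = v_j^2$, $a_3 b_j + n = w_j^2$. Eliminating $b_j$ between the first and second, and between the first and third, yields
\begin{align*}
    a_1 v_j^2 - a_2 u_j^2 &= n(a_1 - a_2),\\
    a_1 w_j^2 - a_3 u_j^2 &= n(a_1 - a_3).
\end{align*}
Multiplying through appropriately to clear leading coefficients (e.g. multiply the first by $a_1$ to get $(a_1 v_j)^2 = a_1 a_2 u_j^2 + a_1 n(a_1-a_2)$, so that $X = a_1 v_j$, $Z = u_j$ satisfy $X^2 - (a_1 a_2) Z^2 = a_1 n(a_1 - a_2)$; similarly $Y = a_1 w_j$, $Z = u_j$ satisfy $Y^2 - (a_1 a_3) Z^2 = a_1 n(a_1 - a_3)$), we obtain, for each $j$, a positive integral solution $(X_j, Y_j, Z_j)$ with common $Z_j = u_j$ to a fixed pair of generalized Pell equations $X^2 - a Z^2 = u$, $Y^2 - b Z^2 = v$ with $a = a_1 a_2$, $b = a_1 a_3$, $u = a_1 n(a_1-a_2)$, $v = a_1 n(a_1 - a_3)$. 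Note $av - bu = a_1^2 a_2 n(a_1-a_3) - a_1^2 a_3 n (a_1-a_2) = a_1^2 n (a_1)(a_2 - a_3) \neq 0$, so the hypothesis $av \neq bu$ of Lemma~\ref{lem:Bennett_thm_in_gap_principle} holds.

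\emph{Step 2: Pigeonhole into one pair of base solutions.} Each solution $(X_j, Y_j, Z_j)$ belongs to one of at most $2^{\omega(u)}$ base solutions of the first equation and one of at most $2^{\omega(v)}$ base solutions of the second; however, since $Z_j = u_j$ is common to both, the number of admissible pairs of base solutions is bounded by $h = 2^{\min\{\omega(u), \omega(v)\}}$ after one observes $u = a_1 n(a_1-a_2)$ and $v = a_1 n(a_1 - a_3)$, i.e. $\min\{\omega(na_1(a_1-a_2)),\, \omega(na_1(a_1-a_3))\}$. (One should check carefully that fixing the base solution for the equation with fewer prime factors of its constant pins down the residue class $l$ and hence, together with the shared $Z$-value, constrains the other; this is the standard argument that the common-$Z$ system has at most $h$ base pairs.) Since $m \ge 2h+1$, by pigeonhole at least $\lceil (2h+1)/h \rceil = 3$ of the indices $j$ share a single pair of base solutions; call the corresponding $Z$-values $z_1 < z_2 < z_3$ (these are three of the $u_j$'s, in increasing order, hence increasing since $b_j \mapsto u_j^2 = a_1 b_j + n$ is increasing).

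\emph{Step 3: Check the size hypothesis and apply the gap principle.} We need $z_1 > \max\{|u|,|v|\}^3 = (a_1 |n| \max\{a_3-a_1, a_2-a_1\})^3 < (a_1 |n| a_3)^3$. Since $z_1$ is the smallest of the chosen $u_j$'s and each $u_j^2 = a_1 b_j + n \ge a_1 b_1 + n > a_1 b_1 / 2$ (using $b_1 > |n|$), we get $z_1 > \sqrt{a_1 b_1 / 2}$. The hypothesis $b_1 > n^6 a_1^5 a_3^6$ is exactly what is needed so that $\sqrt{a_1 b_1/2} > (a_1 |n| a_3)^3$, i.e. $a_1 b_1 / 2 > a_1^6 n^6 a_3^6$, i.e. $b_1 > 2 a_1^5 n^6 a_3^6$; this follows from the stated hypothesis (one may need to keep a small numerical slack, but the exponents match). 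Thus Lemma~\ref{lem:Bennett_thm_in_gap_principle} gives $z_3 > z_1^3$. Translating back: $z_3^2 = a_1 b_{j_3} + n \le a_1 b_m + n$ while $z_1^3 > z_1^3$ with $z_1^2 = a_1 b_{j_1} + n \ge a_1 b_1 + n > a_1 b_1 (1 - 1/3^5)$ roughly, so $z_3^2 > z_1^6 > (a_1 b_1)^3 (1 - 1/3^5)^3$; hence $a_1 b_m + n > (1 - 1/3^5)^? a_1^3 b_1^3$ and, after absorbing the $+n$ term and the constant, $b_m > (1 - \tfrac{1}{3^5}) a_1^2 b_1^3$. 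The constant $(1 - 3^{-5})$ is what comes out of carefully tracking the lower-order terms $+n$ and the cubing.

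\emph{Main obstacle.} The delicate point is Step 2: correctly justifying that the number of base-solution pairs for the \emph{coupled} system (with the shared variable $Z = u_j$) is at most $h = 2^{\min\{\omega(u),\omega(v)\}}$ rather than $2^{\omega(u)} \cdot 2^{\omega(v)}$. This requires invoking the structure of base solutions — that a base solution of $X^2 - aZ^2 = u$ is determined by a residue $l \bmod u$ with $l^2 \equiv a$, and that along a fixed Pell orbit the value of $Z \bmod u$ is determined — so that knowing the orbit for the equation with the smaller $\omega$ and the value of $Z$ forces the orbit for the other equation. The rest is bookkeeping: verifying the size threshold $b_1 > n^6 a_1^5 a_3^6$ suffices for the $\max\{|u|,|v|\}^3$ condition, and chasing the constant $1 - 3^{-5}$ through the final translation from $z$-values back to $b$-values.
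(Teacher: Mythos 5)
Your proposal follows essentially the same route as the paper's proof: you normalize to the same pair of Pell equations $X^2-a_1a_2Z^2=na_1(a_1-a_2)$, $Y^2-a_1a_3Z^2=na_1(a_1-a_3)$ with shared $Z$, pigeonhole the $m\ge 2h+1$ solutions into at most $h$ base-solution classes, verify $z_1>\max\{|u|,|v|\}^3$ from $b_1>n^6a_1^5a_3^6$, and apply Bennett's gap lemma to get $z_3>z_1^3$ before translating back to $b_m$. The two points you leave loose are only routine bookkeeping, handled in the paper exactly as you anticipate: the slack in the size check comes from $a_3-a_1\le a_3-1$, and the precise constant $1-\tfrac{1}{3^5}$ comes from $a_1b_m>a_1^2b_1^2(a_1b_1-3|n|)>a_1^3b_1^3\left(1-\tfrac{3}{a_3^6}\right)$ together with $a_3\ge 3$.
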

\begin{proof}
For each $1 \leq i \leq m$, by definition, there are positive integers $x_{i},y_{i}, z_{i}$ such that
\begin{align}\label{eqn:BDn_defn}
    a_{1}b_{i}+n &= z_{i}^{2}, & a_{2}b_{i}+n &= y_{i}^{2}, & a_{3}b_{i}+n &= x_{i}^{2}. 
\end{align} 
Note that each $(x_i,y_i,z_i)$ is a solution of the following system of Pell equations
\begin{align*}
    a_{1}x^{2} - a_{3}z^{2} &= n(a_{1}-a_{3}), \\
    a_{1}y^{2} - a_{2}z^{2} &= n(a_{1}-a_{2}) .
\end{align*}
Observe that if $(x,y,z)$ is a solution to the above system of Pell equations, then $(a_{1}x,y,z)$ will be the solution of the following system of Pell equations
\begin{align}\label{eqn:simultaneous_generalized_Pell_1}
\begin{aligned}
    x^{2} - a_{1}a_{3}z^{2} &= na_{1}(a_{1}-a_{3}), \\
    y^{2} - a_{1}a_{2}z^{2} &= na_{1}(a_{1}-a_{2}) .
\end{aligned}
\end{align}
Thus, each $(a_{1}x_{i},y_{i},z_{i})$ is a solution to the system of Pell equations in~\eqref{eqn:simultaneous_generalized_Pell_1}.

Since $m\geq 2h+1$ and there are at most $h$ different base solutions to the system of simultaneous Pell equations~\eqref{eqn:simultaneous_generalized_Pell_1}, the pigeonhole principle guarantees that there are at least three solutions to \eqref{eqn:simultaneous_generalized_Pell_1} that arise from the same base solution.
Denote the three solutions by $(x_{i_{j}}, y_{i_{j}}, z_{i_{j}})$ for $1 \leq j \leq 3$, with $i_1<i_2<i_3$. Since
$b_{1} > n^{6}a_{1}^{5}a_{3}^{6}$, we have
$$z_{i_1}^2\geq z_{1}^{2} = a_{1}b_{1}+n > n^{6}a_{1}^{6}a_{3}^{6} + n \geq n^{6}a_{1}^{6}a_{3}^{6} - \abs{n} \geq n^{6}a_{1}^{6}a_{3}^{6} - n^{6}a_{1}^{6} > n^{6}a_{1}^{6}(a_{1}-a_{3})^{6}.$$

Applying Lemma~\ref{lem:Bennett_thm_in_gap_principle} gives us $z_{i_{3}} > z_{i_{1}}^{3}$. In particular, $z_{m} \geq z_{i_{3}} > z_{i_{1}}^{3} \geq z_{1}^{3}$, that is,
\begin{align*}
    a_{1}b_{m} +n > (a_{1}b_{1}+n)^{3} .
\end{align*}
We then deduce that
$$
a_{1}b_{m} > a_{1}^{3}b_{1}^{3} -3\abs{n}a_{1}^{2}b_{1}^{2} = a_{1}^{2}b_{1}^{2}(a_{1}b_{1} - 3\abs{n}) > a_{1}^{2}b_{1}^{2} \left( a_{1}b_{1} - \frac{3a_{1}b_{1}}{\abs{n}^{5}a_{1}^{6}a_{3}^{6}} \right) > a_{1}^{3}b_{1}^{3} \left( 1- \frac{3}{a_{3}^{6}} \right) .
$$
The desired conclusion $b_{m} > ( 1 - \frac{1}{3^{5}}) a_{1}^{2}b_{1}^{3}$ now follows from $a_{3} \geq 3$.
\end{proof}

We proceed to deduce a gap principle for multiplicative Hilbert cubes contained in shifted squares. 

\begin{thm}\label{thm:explicit_bound_d}
Let $n$ be a nonzero integer. Let $a_0, a_1,\ldots, a_d$ be positive integers such that $2\leq a_{1} < a_{2} < \cdots < a_{d}$. Set $$m = 2^{\min \{ \omega(n(1-a_{1})),\ \omega(n(1-a_{2})) \}}.$$ Assume that $H^\times (a_0;a_1,a_2,\ldots, a_d) \subseteq \{x^2-n: x \in \N\}$. 
If $a_{0} > n^{6} a_{2}^{4}$, then $d < \frac{3+ \sqrt{32m+17}}{2}$. 
\end{thm}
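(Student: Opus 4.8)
The plan is to convert $H^\times(a_0;a_1,\dots,a_d)\subseteq\{x^2-n:x\in\N\}$ into a statement about solutions of one fixed system of simultaneous Pell equations and then to play Bennett's gap principle (Lemma~\ref{lem:Bennett_thm_in_gap_principle}) against a counting argument. First I would use $\{1,a_1,a_2\}$ as the ``small side'': for each $J\subseteq\{3,4,\dots,d\}$ set $b_J=a_0\prod_{j\in J}a_j$, so that $b_J+n$, $a_1b_J+n$ and $a_2b_J+n$ are perfect squares. Eliminating $b_J$ between these identities shows that each $J$ produces a positive solution $(x_J,y_J,z_J)$, with $z_J^2=b_J+n$, of
$$x^2-a_2z^2=n(1-a_2),\qquad y^2-a_1z^2=n(1-a_1).$$
As in the proof of the preceding proposition, the number of pairs of base solutions of this system is at most $2^{\min\{\omega(n(1-a_1)),\,\omega(n(1-a_2))\}}=m$, so all the $z_J$ split into at most $m$ classes.

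Next I would use $a_0>n^6a_2^4$ to push the products I need above the threshold $\Lambda:=\max\{|n(1-a_1)|,|n(1-a_2)|\}^3=\bigl(|n|(a_2-1)\bigr)^3$ appearing in Lemma~\ref{lem:Bennett_thm_in_gap_principle}. Since $a_3<a_4<\cdots$ all exceed $a_2$, for $|J|\ge2$ one gets $b_J\ge a_0a_3a_4>a_0a_2^2>n^6a_2^6>\Lambda^2$, hence $z_J>\Lambda$; the products with $|J|=1$ are above $\Lambda$ apart from some $j$ with $a_j$ close to $a_2$, which need separate, minor bookkeeping.

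The core of the argument is a gap-versus-counting dichotomy. If $b_{J_1}<b_{J_2}<b_{J_3}$ all have $z$-coordinate above $\Lambda$ and lie in one class, then $z_{J_3}>z_{J_1}^3$, so $b_{J_3}+n>(b_{J_1}+n)^3$, hence $b_{J_3}>b_{J_1}^2>a_0\,b_{J_1}$: any sub-collection of the products spanning a multiplicative window of ratio at most $a_0$ therefore meets each of the $m$ classes in at most two elements, so has at most $2m$ members. I would apply this to the family $\mathcal F$ of all $b_J$ with $|J|\in\{1,2\}$, of which there are exactly $(d-2)+\binom{d-2}{2}=\binom{d-1}{2}$; using $a_0>n^6a_2^4$ and the ordering $2\le a_1<a_2<\cdots<a_d$ to confirm that $\mathcal F$ can be covered by a controlled number of ratio-$\le a_0$ windows, one should arrive at $\binom{d-1}{2}<4m+2$, which rearranges exactly to $d<\tfrac{3+\sqrt{32m+17}}{2}$.

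The main obstacle, I expect, is precisely this last step: controlling how tightly $\mathcal F$ is packed in multiplicative scale (this is where the full force of $a_0>n^6a_2^4$ and the monotonicity of the $a_i$ must be used, perhaps after a preliminary bound on the spread of the $a_i$ in the spirit of Corollary~\ref{cor:loga0}), together with exact accounting of the handful of size-one products below $\Lambda$ and of the numerical constants, so as to land on the clean form with $32$ and $17$ rather than a mere $O(m)$.
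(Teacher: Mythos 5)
Your reduction to the simultaneous Pell system with small side $\{1,a_1,a_2\}$, the use of base-solution classes, and the appeal to Lemma~\ref{lem:Bennett_thm_in_gap_principle} all match the paper, and reverse-engineering the target bound to a count of roughly $\binom{d-1}{2}$ versus $4m+2$ is the right numerology. But the step you flag as the ``main obstacle'' is a genuine gap, not bookkeeping: there is no way to cover the family $\mathcal F=\{a_0\prod_{j\in J}a_j:\ |J|\in\{1,2\}\}$ by a bounded number of multiplicative windows of ratio at most $a_0$ (or of any ratio controlled by the hypotheses). The elements of $\mathcal F$ range from about $a_0a_3$ up to $a_0a_{d-1}a_d$, and nothing in the hypotheses bounds $a_{d-1}a_d/a_3$ in terms of $a_0$ or $n$; the $a_i$ can be arbitrarily sparse. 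So your per-window bound of $2m$ elements gives only $|\mathcal F|\ll m\cdot(\text{number of windows})$ with an uncontrolled second factor, and Corollary~\ref{cor:loga0} does not rescue this: it bounds $d$ (in terms of $\log|n|$ and $\log a_0/\log a_7$), not the multiplicative spread of the $a_i$, and invoking anything of that type would in any case destroy the clean $\sqrt{32m+17}$ form you need.

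The paper sidesteps the window problem by choosing the test products so that the gap principle kills any triple in a class outright, with no covering argument at all. Concretely, it takes $B$ to consist of products $a_i\prod_{j\ge d-\ell+3-s}^{d}a_j$ sharing long common suffixes of the largest $a_j$'s (with $s$ ranging over $k=2\ell-4$ suffix lengths and $i$ over the remaining small indices). By construction the largest element of $a_0B$ is at most $a_0\prod_{j=d-3\ell+7}^{d}a_j$ while the smallest is at least $a_0a_3\prod_{j=d-\ell+3}^{d}a_j$, and monotonicity of the $a_j$ forces $\prod_{j=d-3\ell+7}^{d-\ell+2}a_j<\prod_{j=d-\ell+3}^{d}a_j^2$, i.e.\ the cube of the smallest exceeds the largest (here $a_0>n^6a_2^4$ both clears Bennett's threshold and absorbs the error terms). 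Hence three solutions in one base-solution class are impossible, so $|B|\le 2m$; choosing $\ell$ so that $|B|=\tfrac{(2d-2\ell-k+1)k}{2}>2m$ is possible precisely when $d\ge\tfrac{3+\sqrt{32m+17}}{2}$, giving the contradiction. This ``design the family so that max $<$ (min)$^3$'' idea is what your proposal is missing, and without it (or some substitute that controls the multiplicative spread of the products you count) the argument does not close.
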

\begin{proof}
Suppose otherwise that $d \geq \frac{3+ \sqrt{32m+17}}{2}$. Set 
\begin{align}\label{eqn:subset_of_solutions_b}
\begin{aligned}
    B=&\left\{ a_{i} \prod_{j = d-\ell+3}^{d} a_{j} \mid 3 \leq i \leq d-\ell+2 \right\} \bigcup \left\{ a_{i} \prod_{j = d-\ell+2}^{d} a_{j} \mid 3 \leq i \leq d-\ell+1 \right\} \\
    &\bigcup \cdots \bigcup \left\{ a_{i} \prod_{j = d-\ell-k +4}^{d} a_{j} \mid 3 \leq i \leq d-\ell - k +3 \right\},
\end{aligned}
\end{align}
where $\ell$ is a positive integer, to be specified later, satisfying
$$
\frac{2d+13 - \sqrt{(2d+13)^{2}-16(4d+2m+10)}}{8} < \ell < \frac{2d+13 + \sqrt{(2d+13)^{2}-16(4d+2m+10)}}{8}
$$
and $k = 2\ell -4$.
Note that our assumption $d \geq \frac{3+ \sqrt{32m+17}}{2}$ guarantees that $(2d+13)^{2}-16(4d+2m+10) \geq 17$. There must exist a positive integer $\ell$ lying in the desired range. We then pick $\ell$ to be the smallest positive integer in that range.

By definition, for each $b\in B$, there are positive integers $x,y$ and $z$ such that
\begin{align*}
    a_{0}b + n &= z^{2} & a_{0}a_{1}b + n &= x^{2} & a_{0}a_{2}b + n &= y^{2},
\end{align*}
which corresponds to a solution to the following system of simultaneous Pell equations
\begin{align}\label{eqn:simultaneous_Pell_Hilbert_cubes_1}
    x^{2} - a_{1}z^{2} &= n(1-a_{1}), & y^{2} - a_{2}z^{2} &= n(1-a_{2}) .
\end{align}

Note that $|B|=\frac{(2d-2\ell-k+1) k}{2}$, where  each element corresponds to a solution to the system of simultaneous Pell equations~\eqref{eqn:simultaneous_Pell_Hilbert_cubes_1}. Moreover, the solutions are arranged in increasing order. From our choice of $k$ and $\ell$, we see that $\frac{(2d-2\ell-k+1) k}{2} > 2m$. Thus, by the pigeonhole principle, there are at least three solutions coming from the same base solution. We check that
\begin{align*}
    a_{0}a_{3} \prod_{j=d-\ell +3}^{d} a_{j} \ + n > n^{6}a_{2}^{4}a_{3} \prod_{j=d-\ell +3}^{d} a_{j} \ - \abs{n} > n^{6}a_{2}^{4}a_{3}a_{d} - \abs{n} > n^{6}(1-a_{2})^{6} .
\end{align*}
Hence, we can apply Lemma~\ref{lem:Bennett_thm_in_gap_principle} to deduce that 
\begin{align*}
    a_{0} \prod_{j=d-\ell-k+3}^{d} a_{j} \ + n > \bigg(a_{0}a_{3} \prod_{j=d-\ell +3}^{d} a_{j} \ + n\bigg)^{3} .
\end{align*}
We then deduce that
\begin{align*}
    a_{0}\prod_{j=d-3\ell+7}^{d} a_{j} &> a_{0}^{3}a_{3}^{3} \left(\prod_{j=d-\ell+3}^{d} a_{j}^{3} \right) -3\abs{n}a_{0}^{2}a_{3}^{2} \left(\prod_{j=d-\ell+3}^{d} a_{j}^{2} \right) \\
    &= a_{0}^{2}a_{3}^{2} \left(\prod_{j=d-\ell+3}^{d} a_{j}^{2} \right) \left( a_{0}a_{3} \left(\prod_{j=d-\ell+3}^{d} a_{j} \right) -3\abs{n} \right) \\
    &> a_{0}^{3}a_{3}^{3} \prod_{j=d-\ell+3}^{d} a_{j}^{3} \left( 1 - \frac{3}{\abs{n}^{5}a_{2}^{4}a_{3}} \right) > \frac{15}{16} a_{0}^{3}a_{3}^{3} \prod_{j=d-\ell+3}^{d} a_{j}^{3}.
\end{align*}
Rearrange terms, we deduce that $$\prod_{j=d-3\ell+7}^{d-\ell+2} a_{j} > \frac{15}{16} a_{0}^{2}a_{3}^{3} \prod_{j=d-\ell+3}^{d} a_{j}^{2}>\prod_{j=d-\ell+3}^{d} a_{j}^{2},$$
violating the assumption that $a_1<a_2<\cdots<a_d$.
\end{proof}

Finally, we combine Corollary~\ref{cor:loga0} and Theorem~\ref{thm:explicit_bound_d} to conclude Theorem~\ref{thm:a1}.

\begin{proof}[Proof of Theorem~\ref{thm:a1}]
If $a_0\leq n^{6} a_{2}^{4}$, Corollary~\ref{cor:loga0} implies that $d \ll \log (|n|+1)$ and we are done. Next assume that $a_{0} > n^{6} a_{2}^{4}$. The theorem then follows from Theorem~\ref{thm:explicit_bound_d} and the estimate
\[
\omega(n(1-a_1))\leq \frac{(1+o(1))\log (|n|a_1)}{\log \log (|n|a_1)}.\qedhere
\]
\end{proof}

\section*{Acknowledgments}
The authors thank Michael Bennett, Ernie Croot, Andrej Dujella, Greg Knapp, and Attila Pethő for helpful discussions. The authors also thank the anonymous referees for their valuable comments and suggestions. The first author was supported in part by an NSERC Discovery Grant. The second author was supported in part by an NSERC fellowship.

\bibliographystyle{abbrv}
\bibliography{main}

\end{document}